\thanks{AMS Subject Classifications:  35B35, 35Q55, 35B40, 35A05}
\newtheorem{theorem}{Theorem}
\newtheorem{lemma}[theorem]{Lemma}
\newtheorem{proposition}[theorem]{Proposition}
\newtheorem{definition}[theorem]{Definition}
\newtheorem{remark}[theorem]{Remark}
\newcommand{\beqaa}{\begin{eqnarray}}
\newcommand{\eeqaa}{\end{eqnarray}}
\newcommand{\beqae}{\begin{eqnarray*}}
\newcommand{\eeqae}{\end{eqnarray*}}
\newcommand{\con}{\rightarrow}
\newcommand{\pbrack}[1]{\left( {#1} \right)}
\newcommand{\key}[1]{\left\{ {#1} \right\}}
\newcommand{\dual}[2]{\langle{#1},{#2}\rangle}
\newcommand{\R}{{\mathbb R}}
\newcommand{\N}{{\mathbb N}}
\begin{document}
 
\title{ The regularity of the coupled system between an 
electrical network with fractional 
dissipation and a plate equation with 
fractional inertial rotational }
\thanks{Accepted for publication: June 2023.} 
\date{}
\maketitle     
 
\vspace{ -1\baselineskip}

{\small
\begin{center}
{\sc Santos R.W.S.  Bejarano}\\
Department of Mathematics \\  
Federal University of Technology of  Paran\'a, Brazil \\[10pt]
{\sc Filomena B.R.  Mendes}\\
Department of Electrical Engineering \\ 
Federal University of Technology of Paran\'a, Brazil  \\[10pt]
{\sc Fredy M.S.  Su\'arez}\footnote{Corresponding author: fredy@utfpr.edu.br},  
{\sc Gilson Tumelero,}  and \\
{\sc Marieli M.  Tumelero}\\
Department of Mathematics\\  
Federal University of Technology of  Paran\'a, Brazil  \\[10pt]
 (Submitted by: Patrizia Pucci)  
\end{center}
}

\numberwithin{equation}{section}
\allowdisplaybreaks

 \smallskip

 \begin{quote}
\footnotesize
{\bf Abstract.}  
In this work we study a strongly coupled 
system between the equation of plates with 
fractional rotational inertial force 
$\kappa(-\Delta)^\beta u_{tt}$ where the parameter 
$0 <\beta\leq 1$ and the equation of an 
electrical network containing a fractional 
dissipation term 
$\delta(-\Delta)^\theta v_t$ where the parameter 
$0\leq \theta\leq 1$, the strong coupling 
terms are given by the Laplacian of the 
displacement speed  
$\gamma \Delta u_t$ 
and the Laplacian electric potential field 
$\gamma\Delta v_t$.  
When $\beta = 1$, 
we have the Kirchoff-Love plate and when 
$\beta = 0$,  we have the 
Euler-Bernoulli plate recently studied in 
Su\'arez-Mendes (2022-Preprinter)\cite{Suarez}. 
The contributions of this research
are: We prove the semigroup $S(t)$ 
associated with the system is not analytic in 
$(\theta,\beta)\in [0,1]\times(0,1]-\{( 1,1/2)\}$.  
We also determine two Gevrey classes: 
 $s_1 >\frac{1}{2\max\{ \frac{1-\beta}{3-\beta}, \frac{\theta}{2+\theta-\beta}\}}$  for  $2\leq \theta+2\beta$ and 
$s_2>  \frac{2(2+\theta-\beta)}{\theta}$  
when the parameters $\theta$ and 
$\beta$  lies in the interval $(0, 1)$  
and we finish by proving that at the point 
$(\theta,\beta)=(1,1/2)$ the semigroup 
$S(t)$ is analytic and with a note about 
the asymptotic behavior of $S(t)$. 
We apply semigroup theory, the frequency 
domain method together with multipliers 
and the proper decomposition of the 
system components and Lions'  
interpolation inequality.
\end{quote}
 
\setcounter{equation}{0}
\section{Introduction}

This research was motivated by the 
mathematical model used to control the 
vibrations of plates coupled to an electrical 
network composed of piezoelectric actuators 
and/or transducers,  see  equations
$(2b)$ and
$(2c)$ (page 1247) of \cite{CMIsolaDDVescovo}, 
they   represent the mathematical model of 
an electrical network called: Second-order 
electrical transmission line with zero-order 
or second-order dissipation, respectively.  
They  are given by 
\begin{equation*}
v_{tt}-\beta_2\Delta v+\delta_0v_t =0 
\qquad  {\rm and}\qquad 
v_{tt}-\beta_2\Delta v-\delta_2 \Delta v_t =0.
\end{equation*}
 Where
$v(x, t)$ denotes the time integral of 
the electric potential difference between 
the nodes and the ground.

For more details of the mathematical 
model of the plate coupled with a membrane consult 
\cite{VidoliIsola} (Equation (23) page 442)  or  
\cite{PIsolaS2005} (Subsection 3.3 equation 
(18)) satisfying the  boundary conditions 
$u=\Delta u=v=0$ for
$(x,t)\in (\partial\Omega, \mathbb{R}^+)$   
is given by
\begin{eqnarray}
\label{ISplacas0}
    u_{tt}+\beta_2\Delta^2 u-\gamma\Delta
     v_t=0,\quad
    &x\in\Omega,&
t>0, \\
\label{ISplacas1}
    v_{tt}
    -\beta_2\Delta v+\gamma\Delta u_t+\delta 
    v_t+\delta\gamma\Delta u =0,\quad
    &x\in\Omega,&
     t>0,
\end{eqnarray}
where
$u(x,t)$ is   the transversal 
displacements of the plates  and
$v(x,t)$ is time-integral of  the 
electric potential difference  between 
the nodes and the ground, at time
$t>0$  and
$x\in \Omega\subset \R^n$    is a 
bounded open set with smooth boundary
$\partial\Omega$,
the coefficient
$ \beta_2>0$ and 
$\gamma$ coupling coefficient is 
non-zero.  

Another equivalent and well  
known model in the environment of 
mathematical  is the thermoelastic 
Euler-Bernoulli plates of type III.  
Specifically to this last model,  
there are several researches on asymptotic 
properties and analyticity,  two of them are 
\cite{LiuQuintanilla2010, ZOroPata2013}.  
The first one using a characterization 
of semigroups and proving the analyticity 
of the semigroup associated to the system.  
The second one,   defining
$A \colon D(A)\subset H\to H$ being a 
strictly positive self-adjoint operator,
$H$ being the  Hilbert space and considering
$D(A)$ dense in
$H$. They studied  the exponential stability 
of both  models,  the first one is given by
\begin{eqnarray}
\label{ISplacas0A}
u_{tt}+A^2u-A^\sigma v_t=0,\\
\label{ISplacas1A}
v_{tt}+Av+A^\sigma u_t+A^\sigma v_t=0, 
\end{eqnarray}
and the second one:
\begin{eqnarray*}
u_{tt}+A^2u- A^\sigma v_t=0,\\
v_{tt}+Av+A^\sigma u_t+\int_0^\infty 
\mu(s)A[v(t)-v(t-s)]ds=0
\end{eqnarray*}
where
$\sigma\leq\frac{3}{2}$.  
Note that if in the system 
\eqref{ISplacas0A}-\eqref{ISplacas1A} we take
$A=-\Delta$ such that
$D(A)=H_0^1(\Omega)\cap H^2(\Omega)$ and 
$\sigma=1$,  we will have a system 
equivalent to the system 
\eqref{ISplacas0}-\eqref{ISplacas1}.  When
$\sigma\geq 1/2$,  it is  
proved the exponential decay of
$S(t)$ using the energy method.  
It is also proved that
$S(t)$ fails to be exponentially stable if
$\sigma <1/2$.    
From   a physical point of view,  
this is not surprising.  Indeed, 
since the dissipation mechanism is only 
thermal, when the coupling is not strong 
enough, then  the system is not able to 
convert thermal dissipation into mechanical 
dissipation, which is needed to stabilize 
the plate.  Concerning the semigroup 
$S(t)$,  the lack of exponential stability 
is tested for any value of
$\sigma$.

As we can see in the previous systems, 
the first equation is the Euler-Bernoulli 
plate equation.    Recently in Su\'arez 
and Mendes \cite{Suarez}.  The authors 
investigated the model given by:
\begin{eqnarray}
\label{Suarez001}
u_{tt}+\alpha\Delta^2 u-\gamma \Delta v_t 
& = & 0, \quad x\in\Omega,\quad t>0\\
\label{Suarez002}
v_{tt}-\beta \Delta v+\gamma \Delta u_t+
\delta (-\Delta)^\theta v_t & =  & 0, 
\quad x\in \Omega,\quad t>0,
\end{eqnarray}
satisfying the boundary conditions
\begin{equation}
\label{Suarez003}
u=\Delta u=0, \qquad v=0, \qquad  x\in 
\partial\Omega,\qquad t>0.
\end{equation}
In that paper is used frequency domain 
techniques, characterization of semigroup 
theory and Lions interpolation inequality,  
to study asymptotic behavior, analyticity 
and Gevrey class and demonstrate the 
exponential decay of the semigroup
$S(t)=e^{\mathcal{B}t}$,  for
$0 \leq  \theta\leq 1$, it  also proves  
the lack of analyticity of the
semigroup
$S(t)$ for
$\theta\in [0, 1)$ and the analyticity of
$S(t)$ for
$\theta =1$.   Furthermore,   for
$\theta\in (0,1)$ is shown that
$S(t)$ is of Gevrey class
$s>\frac{2+\theta}{\theta}$ for
$\theta\in(0,1)$.

In order to complement the research of 
Su\'arez and Mendes \cite{Suarez} and also 
extend the research with regard to the 
physical model given in 
\eqref{ISplacas0}-\eqref{ISplacas1},  
we introduce the fractional rotational 
inertial force term given by
$\kappa(-\Delta)^\beta u_{tt}$ where 
the parameter
$\beta\in (0,1]$ and we will consider 
the most general dissipative term
$\delta(-\Delta)^\theta v_t$ in the 
electrical mesh equation where the parameter
$\theta\in [0,1]$.  

It is worth mentioning here the recent 
work by G. F. Tyszka et al.  \cite{TMH2022}, 
in this work the authors study the asymptotic 
behavior of the solutions of a dissipative 
coupled system between the plates: 
Kirchhoff-plate and Euler-Bernoulli. 
The dissipation mechanism is given by 
a memory term that acts on both equations 
of the system or just one of the equations. 
The authors show that the solutions of 
the system with dissipation in both 
of the equations (direct dissipations) 
decay exponentially to zero and when 
the dissipation acts only in one of 
the equations of the system 
(indirect dissipation) the solutions of 
the system decay polynomially to zero,  
and in  this case the decay rates 
determined are the best.

 Moreover, to take advantage of the 
 richness of the properties of 
 self-adjoint and positive definite 
 operators,  let us formally define the
$A$ operator as follows:
$A:D(A)\subset L^2(\Omega)\con L^2(\Omega)$, 
where
\begin{equation}\label{Omenoslaplaciano}
A:=-\Delta,\quad D(A)=H^2(\Omega)\cap H^1_0(\Omega).
\end{equation}
It is known that this  operator given in 
\eqref{Omenoslaplaciano} is self-adjoint, 
positive, has compact inverse and it has 
compact resolvent.  Using this operator, 
the system in  investigation is given by
\begin{eqnarray}
\label{ISplacas-20}
    u_{tt}+\kappa A^\beta u_{tt}+ \alpha A^2 u
    +\gamma Av_t=0, \quad x\in\Omega,\quad t>0,\\
\label{ISplacas-25}
    v_{tt}+\alpha Av-\gamma Au_t+\delta 
    A^{\theta} v_t =0,\quad x\in\Omega,\quad t>0,
\end{eqnarray}
the initial data is
\begin{eqnarray}
\label{ISplacas-30} u(0)=u_0,\ u_t(0)=u_1,\
v(0)=v_0,\ v_t(0)=v_1,
\end{eqnarray}
and satisfying the boundary conditions
\begin{equation}\label{ISplacas-40}
u(x,t)=Au(x,t)=0,\quad v(x,t)=0,
\quad{\rm onto}\quad  x\in\partial\Omega,\ t>0.
\end{equation}
 Our system in this investigation is a 
 coupled system of a wave equation with 
 a plate equation with indirect damping 
within the domain (terminology initially 
used by Russell in his paper \cite{Russell-1993}). 
In recent decades, many researchers have 
focused their efforts on studying the 
asymptotic behavior and/or regularity 
of equations with fractional dissipation 
or coupled systems with indirect dam-ping.  
Below,  we briefly mention some research 
in this direction:  Fatori et al.  
\cite{FMJ2012},  studied the differentiability, 
analyticity,  and the optimal decay rate of 
an abstract  wave-like equation given by:
\begin{equation*}
u_{tt}+Au+Bu_t=0,
\end{equation*}
where
$A$, and
$B$ are a self-adjoint positive 
definite operators with the domain
$D(A^\alpha)=D(B)$ dense in a Hilbert space
$H$.  Considering the following hypotheses:
\begin{enumerate}
\item There exists positive constants
$C_1$ and
$C_2$,  and for any
$u\in D(A^\alpha)$ such that 
\begin{equation*}
C_1A^\alpha\leq B\leq C_2A^\alpha
\Longleftrightarrow C_1\dual{A^\alpha u}{u}
\leq\dual{BU}{u}\leq C_2\dual{A^\alpha u}{u}, 
\end{equation*}
\item The bilinear form
$b(u,w)=\dual{B^\frac{1}{2}u}{B^\frac{1}{2}w}$ 
is continuous on
$D(A^\frac{\alpha}{2})\times D(A^\frac{\alpha}{2})$.  
By the Riesz representation theorem, 
this hypothesis implies that there 
is an operator
$S\in \mathcal{L}(D(A^\frac{\alpha}{2}))$ 
such that 
$\dual{Bu}{w}=\dual{A^\frac{\alpha}{2}Su}{A^\frac{\alpha}{2}w}$ 
for any
$u,w\in D(A^\frac{\alpha}{2}).$
\end{enumerate}
They show that: When the parameter
$\alpha \in [1/2,1]$,  the semigroup
$S(t)$ associated with the model is analytic,  
in the case of the parameter
$\alpha \in (0, 1/2 )$ showed that
$S(t)$ is differentiable and also showed 
that for
$\gamma=-\alpha> 0$, the semigroup
$S(t)$  decays polynomially to zero with the 
rate
$t^{-\frac{1}{\gamma}}$ and they still prove 
that this rate is optimal. 

Tebou \cite{Tebou2012} considered a weakly 
coupled system of plate-wave equations with 
indirect  frictional damping  mechanisms.  
The work  showed that the  system is not 
exponentially stable when the damping acts 
either in the plate equation or in the wave 
equation and a polynomial decay of the 
semigroup was showed using a frequency domain 
approach combined with multiplier techniques  
resulting  in the 
characterization of semigroups. 

 Han and Liu in \cite{HanLiu} have 
 recently studied the regularity and 
 asymptotic behavior of two-plate system 
 solutions where only one of them is 
 dissipative and the indirect system 
 dissipation occurs through the higher 
 order coupling term
$\gamma\Delta w_t$ and
$-\gamma\Delta u_t$. The damping mechanism 
considered in this work was the strutural or  
Kelvin-Voigt damping.  More precisely, 
the system studied in \cite{HanLiu} is:
\begin{eqnarray*}
    u_{tt}+\Delta^2 u+\gamma\Delta w_t=0,
    \quad
    &x\in\Omega,&
    t>0, \\
    w_{tt}+\Delta^2 w-\gamma \Delta u_t-d_{st}\Delta  
    w_t+d_{kv}\Delta^2w_t =0,\quad
    &x\in\Omega,&
    t>0,
\end{eqnarray*}
satisfying the  boundary conditions 
$
u=\frac{\partial u}{\partial\nu}\big|_{\partial\Omega}=0,
\quad w=\frac{\partial w}{\partial\nu}
\big|_{\partial\Omega}=0,\quad \ t>0$, where
$u(x,t)$,
$w(x,t)$ denote the transversal displacements 
of the plates at time
$t$ in 
$\Omega$  bounded open set of
$\R^n$ with smooth boundary
$\partial\Omega$, 
$\gamma\not=0$ is the coupling coefficient.   
Only one of the damping coefficients
$d_{st}\geq 0$ and
$d_{kv}\geq 0$ is positive.  Han and Liu 
showed that the semigroup associated with 
the indirect structural damping ($d_{st}>0$ 
and
$d_{kv}=0$) system is analytical
and exponentially stable from the 
frequency domain method. However, 
through detailed spectral analysis, they 
showed that a branch of eigenvalues of the 
indirect Kelvin-Voigt damping   ($d_{st}=0$ 
and
$d_{kv}>0$) system has a vertical asymptote
$\text{Re}\lambda=-\frac{\gamma^2}{2d_ {kv}}
$. This implies that the associated semigroup 
cannot be analytical and it is also not 
differentiable due to the distribution of 
the spectrum, even so the authors prove 
the exponential stability.   Many other 
papers were published in this direction, 
some of them can be viewed in 
\cite{Alabau1999,  Alabau-2002, Alabau-2011, 
Guglielmi2015, Hao-2015,  Renardy,Russell-1993, 
Tebou-2010,Tebou2012, Tebou-2017}.

Findings involving fractional dissipation 
were published by  Oquendo-Su\'a\-rez (2019) 
\cite{HPOquendo}. 
They studied the following abstract system:
\begin{eqnarray*}
\rho_1u_{tt}-\gamma_1 \Delta u_{tt}+
\beta_1 \Delta^2 u+\alpha v=0,  \quad 
x\in\Omega,\quad t>0,\\
\rho_2v_{tt}-\gamma_2 \Delta v_{tt}+
\beta_2 \Delta^2 v+ \alpha u+\kappa 
(-\Delta)^{\theta} v_t =0, \quad 
x\in\Omega,\quad t>0,
\end{eqnarray*}
 where
$\Omega$ be a bounded open set of
$\R^n$ with smooth boundary
$\partial\Omega$ and one of these equations 
is conservative and the other has fractional 
dissipative properties given by
$(-\Delta)^{\theta} v_t$, where
$0\leq\theta\leq 1$  
and where the coupling terms are
$\alpha u
$ and
$\alpha v$.  They showed that the  
semigroup decays polynomially with 
a rate that depends on
$\theta$ and some relations  among 
the structural coefficients of the system.     
Have shown that the rates obtained are optimal.   
Similarly, we can cite the investigations 
\cite{AShelTebou2021, Tebou-2020, KuangLiuSare2021, 
KuangLiuTebou2022, LiuYong1998, SareLiuRacke2019,
 Tebou-2013}.

This article is organized as follows: 
In Section \eqref{2.00}, we study the 
well-positioning of the 
\eqref{ISplacas-20}--\eqref{ISplacas-40} 
system through semigroup theory. 
We leave the main results for the 
last two sections. We divide the section 
\eqref{3.00} into three subsections, 
in the first one  \eqref{3.1} using 
spectral theory together with a semigroup 
theory characterization and frequency domain 
technique,  we demonstrate the lack of 
analyticity in the
$ R_L$,   
in the following Subsection \eqref{3.2} 
which was also subdivided into two 
subsubsections we determine two Gevrey 
classes of the semigroup
$S(t)$, in Subsubsection (3.2.1), 
we determine the Gevrey class
 $s_1 > \frac{1}{\Phi_{s_1}}=\frac{1}{2\max\{ \frac{1-\beta}{3-\beta}, \frac{\theta}{2+\theta-\beta}\}}$
in the region
$R_{G1} :=\{ (\theta,\beta)\in 
\mathbb{R}^2 / 2\leq \theta+ 2\beta, \;  
0<\theta \leq  1\;{\rm and}\; 
0<\beta<1\}$ 
and in the Subsubsection (3.2.2), 
we determine the Gevrey class
$s_ 2>\frac{1}{\phi_2}=\frac{2(2+\theta-\beta)}
{\theta}$ for the region
$R_{G2} :=\{ (\theta,  \beta)\in 
\mathbb{R}^2  /  3\theta+\beta\leq 1,  
0<\theta\leq 1\; {\rm and}\; 0<\beta < 1 \}$.  
We emphasize that
The Gevrey region of the semigroup S(t)  
is given by:
$R_G = R_{G1}\cup R_{G2}$.  
Since 
$R_{G1}\cap R_{G2}$ is not empty,  
in this intersection region the Gevrey 
class will be given by:
$s_i >\frac{1}{\max\{\Phi_{s_1}, \phi_2\}}$.  
Finally,  in the  Subsection \eqref{3.3},
 we prove that at the point
$(\theta,\beta) = (1, 1/2)$  the semigroup 
$S(t)$
 is analytic.  The investigation ends with 
 an observation
regarding the asymptotic behavior of
$S(t)$,  in which it is illustrated  
that the necessary estimates given in 
Propositions \eqref{limsup} and \eqref{EixoIm} 
imply that the semigroup associated with the 
system \eqref{ISplacas-20}--\eqref{ISplacas-40} 
is exponentially stable for
$(\theta,\beta) \in [0,1]\times (0,1]$.

\section{Well-Posedness of the System}
\label{2.00}
In this section,
 we will use the semigroup theory to 
 assure the existence and uniqueness of 
 strong solutions for the system 
 \eqref{ISplacas-20}-\eqref{ISplacas-40}  
 where the operator
$A$ is given by \eqref{Omenoslaplaciano}.  
Before that,  let us see some preliminary 
results.
It is important to recall that
$A$ is a positive self-adjoint operator 
with compact inverse in a complex Hilbert space
$D(A^0)=L^2(\Omega)$.  Therefore, the operator
$A^{\theta}$ is self-adjoint positive for all
$\theta\in\mathbb{R}$ and  bounded for
$\theta\leq 0$.   The embedding
$
D(A^{\theta_1})\hookrightarrow D(A^{\theta_2}),
$
is continuous for
$\theta_1>\theta_2$. Here,  the norm in
$D(A^{\theta})$ is given by
$\|u\|_{D(A^{\theta})}:=\|A^{\theta}u\|$,
$u\in D(A^{\theta})$, where
$\|\cdot\|$ denotes the norm in the Hilbert space
$L^2(\Omega)$.  Some of these spaces are:
$D(A^ { \frac 12 })=H_0^1(\Omega)$,
$D(A^0)=L^2(\Omega)$ and
$D(A^{-1/2})=H^{-1}(\Omega)$.  With this notation,
  for 
$\kappa$ positive we can extend the operator
$I+\kappa A^\beta$ in the following sense:
\begin{equation*}\label{BIsometrica}
(I+\kappa A^\beta)\colon D(A^{ \frac  \beta 2 })\to
 \;D(A^{ - \frac  \beta 2 })
\end{equation*}
defined by
\begin{equation}\label{EqPiDual}
\dual{( I+\kappa A^\beta)z_1}
{z_2}_{D(A^{ - \frac  \beta 2 })\times 
D(A^{ \frac  \beta 2 })}
=\dual{z_1}{z_2}+
\kappa\dual{A^{ \frac  \beta 2 }z_1}
{A^{ \frac  \beta 2 }z_2},
\end{equation}
for
$z_1,z_2\in D(A^{\beta /2})$, where
$\dual{\cdot}{\cdot}$ denotes the 
inner product in the Hilbert space
$D(A^0)$. Note that this operator is an 
isometric operator when we consider the 
equivalent norm in the space
$D(A^{ \frac  \beta 2 })$:
$$
\pbrack{\|z\|^2+\kappa\|A^{ \frac  \beta 2 } 
z\|^2}^ { \frac 12 }=
\|z\|_{D(A^\frac{\beta}{2})}^ { \frac 12 }.
$$
Now, we will use a semigroup approach to 
study the well-posedness of the system 
\eqref{ISplacas-20}-\eqref{ISplacas-40}.  
Taking
$w=u_t$,
$z=v_t$ and  considering
$U=( u,v,w,z)$ and
$U_0=(u_0,v_0,u_1,v_1)$, 
the system \eqref{ISplacas-20}--\eqref{ISplacas-40} 
can be written in the following abstract framework
\begin{equation}\label{Fabstrata}
    \frac{d}{dt}U(t)=\mathbb{B} U(t),
    \quad    U(0)=U_0,
\end{equation}
where the operator
$\mathbb{B}$ is given by
  \begin{gather} \label{operadorAgamma}
  \mathbb{B}U := \Big(w,\ z,\ 
  -(I+\kappa A^\beta)^{-1} (\alpha A^2 u
  +\gamma A z),\ - \alpha A v+\gamma 
  Aw-\delta A^{\theta} z \Big)
  \end{gather}
for
$U=(u,v,w,z)$. This operator will be 
defined in a suitable subspace of the 
phase space
 $          
\mathcal{H}:= D(A)\times D(A^\frac{1}{2})
\times D(A^\frac{\beta}{2})\times D(A^0).
 $
It is a Hilbert space with the inner product
\begin{align}  
\label{PISystem}    
\langle U_1,U_2\rangle_\mathcal{H}   := 
 \alpha  \dual{A u_1}{A u_2} & +\alpha 
 \dual{A^\frac{1}{2} v_1}{A^\frac{1}{2} v_2}
 \\
 &
 \notag
 + \dual{ w_{1}} {w_{2}}
+\kappa\dual{A^{ \frac  \beta 2 }w_1}
{A^{ \frac  \beta 2 }w_2}
+\dual{ z_{1}} {z_{2}}, 
\end{align}
for
$U_i=(u_i, v_i, w_i, z_i)\in \mathcal{H}$, 
$i=1,2$,  and induced norm
\begin{equation}\label{NormSystem}
\|U\|_ \mathcal{H}^2:=\alpha\|Au\|^2
+\alpha\|A^\frac{1}{2}v\|^2+\|w\|^2
+\kappa\|A^\frac{\beta}{2}w\|^2+\|z\|^2.
\end{equation}
In these conditions,  from
$D(A)\hookrightarrow D(A^{ \frac  \beta 2 })
\hookrightarrow D(A^0)$ 
and
$D(A^\frac{1}{2})\hookrightarrow
 D(A^{ - \frac  \beta 2 })$,
 we define the domain of
$\mathbb{B}$ as
\begin{align}
\label{dominioA}
   D(\mathbb{B}):   =
    \Big\{ U   \in \mathcal{H}\colon 
    & (w,z)\in  
    D(A)\times D(A^\frac {1}{2} ), \\
    & 
    (-\alpha A^2u-\gamma Az,  - \alpha 
    v- \delta A^{\theta-1} z )   
  \in (D(A^{ - \frac  \beta 2 }), 
    \  D(A)) \Big\}.  \notag
\end{align}
To show that the operator
$\mathbb{B}$ is the generator of a
$C_0-$semigroup,   we invoke a result f
rom Liu-Zheng \cite{LiuZ}.

\begin{theorem}[see Theorem 1.2.4 in \cite{LiuZ}] 
\label{TLiuZ}
Let
$\mathbb{A}$ be a linear operator with domain
$D(\mathbb{A})$ dense in a Hilbert space
$\mathcal{H}$. If
$\mathbb{A}$ is dissipative and
$0\in\rho(\mathbb{A})$, the resolvent set of
$\mathbb{A}$, then
$\mathbb{A}$ is the generator of a
$C_0$-semigroup of contractions on
$\mathcal{H}$.
\end{theorem}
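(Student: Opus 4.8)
The plan is to recognize Theorem \ref{TLiuZ} as a direct specialization of the classical Lumer--Phillips theorem, which asserts that a densely defined dissipative operator generates a $C_0$-semigroup of contractions precisely when $\mathrm{Range}(\lambda I-\mathbb{A})=\mathcal{H}$ for some $\lambda>0$. Dense domain and dissipativity are already hypotheses, so the entire argument reduces to converting the condition $0\in\rho(\mathbb{A})$ into this surjectivity requirement for a strictly positive spectral parameter.

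First I would record that $0\in\rho(\mathbb{A})$ forces $\mathbb{A}$ to be closed (an operator whose resolvent set is nonempty is automatically closed, since $(\lambda I-\mathbb{A})^{-1}$ is bounded, hence closed, for $\lambda\in\rho(\mathbb{A})$) and makes $\mathbb{A}^{-1}$ a bounded operator on $\mathcal{H}$. Because the resolvent set is open in $\mathbb{C}$, the point $0$ has an open neighborhood contained in $\rho(\mathbb{A})$; concretely, using the factorization $\lambda I-\mathbb{A}=-(I-\lambda\mathbb{A}^{-1})\mathbb{A}$ on $D(\mathbb{A})$, the Neumann series inverts $I-\lambda\mathbb{A}^{-1}$ whenever $|\lambda|<\|\mathbb{A}^{-1}\|^{-1}$. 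Choosing a real $\lambda_0$ with $0<\lambda_0<\|\mathbb{A}^{-1}\|^{-1}$ then renders $\lambda_0 I-\mathbb{A}$ boundedly invertible, its inverse being $-\mathbb{A}^{-1}(I-\lambda_0\mathbb{A}^{-1})^{-1}$; in particular $\mathrm{Range}(\lambda_0 I-\mathbb{A})=\mathcal{H}$.

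With dense domain, dissipativity, and the range condition $\mathrm{Range}(\lambda_0 I-\mathbb{A})=\mathcal{H}$ for this $\lambda_0>0$ all in hand, the Lumer--Phillips theorem applies verbatim and yields that $\mathbb{A}$ generates a $C_0$-semigroup of contractions on $\mathcal{H}$. I do not expect a genuine obstacle here: the only point requiring care is the passage from the single resolvent point $0$ to a positive parameter $\lambda_0$, which is nothing more than the openness of $\rho(\mathbb{A})$ made explicit through the Neumann-series estimate above. As an alternative route, the dissipativity inequality $\mathrm{Re}\,\langle\mathbb{A}x,x\rangle\le 0$ could be used to produce the resolvent bound $\|(\lambda I-\mathbb{A})^{-1}\|\le\lambda^{-1}$ for $\lambda>0$ and feed a Hille--Yosida argument directly, but invoking Lumer--Phillips is the shortest path and is precisely the strategy underlying the cited reference.
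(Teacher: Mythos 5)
The paper offers no proof of this statement: it is quoted as Theorem 1.2.4 of Liu--Zheng \cite{LiuZ} and used as a black box. Your argument is correct and is essentially the standard proof given in that reference --- you upgrade $0\in\rho(\mathbb{A})$ to the Lumer--Phillips range condition $\mathrm{Range}(\lambda_0 I-\mathbb{A})=\mathcal{H}$ at some $\lambda_0>0$ via the openness of the resolvent set (made explicit by the Neumann series for $I-\lambda_0\mathbb{A}^{-1}$), and then invoke Lumer--Phillips; nothing further is needed.
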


Let us see that the operator
$\mathbb{B}$ satisfies the conditions 
of this theorem. Clearly, we see that
$D(\mathbb{B})$ is dense in
$\mathcal{H}$.   Performing the inner 
 product of
$\mathbb{B}U$ with
$U$, we have
\begin{equation}
\label{eqdissipative}
\text{Re}\dual{\mathbb{B}U}{U}_\mathcal{H}
=  -\delta\|A^{\theta/2} z\|^2, 
\quad\forall\ U\in D(\mathbb{B}),
\end{equation}
that is, the operator
$\mathbb{B}$ is dissipative. 

To complete the conditions of the 
above theorem,  it remains to
 be demonstrated that
$0\in\rho(\mathbb{B})$. Let
$F=(f_1,f_2,f_3,f_4)\in \mathcal{H}$, 
let us see that the stationary problem
$ \mathbb{B}U=F$ has a solution
$U=(u,v,w,z)$.  From the definition of 
the operator 
$\mathbb{B}$ given in
\eqref{operadorAgamma}, this system
can be written as
\begin{align}
w=f_1,\qquad& \quad\quad  
\alpha A^2 u=-\gamma Af_2-(I+\kappa A^\beta)
f_3 \label{exist-10}\\
z=f_2,\qquad &  \quad\quad \alpha Av =
\gamma Af_1-(I+\delta A^\theta) f_2+f_2-f_4.
 \label{exist-20}
\end{align}
This problem can be placed in a variational 
formulation: To find
$S=(u,v)$ such that
\begin{equation}\label{var-10}
b(S,Z)=h(Z):=\dual{h}{Z},\quad\forall\ Z
=(z_1,z_2)\in D(A)\times D(A^\frac{1}{2}).
\end{equation}
where\quad 
 $h=(-\gamma Af_2-(I+\kappa A^\beta)f_3,
 \gamma Af_1-\delta A^\theta f_2-f_4) \in 
 [D(A)\times D(A^0)]^\prime$\quad  and
\begin{equation*}
b(u,v; z_1,z_2):=\alpha\dual{Au}{Az_1}
+\alpha\dual{A^\frac{1}{2}v}{A^\frac{1}{2}z_2}.
\end{equation*}
The proof of the coercivity of this 
sesquilinear form
$b$ in Hilbert space
$D(A)\times D(A^\frac{1}{2})$ is  
immediate, now  applying the Lax-Milgram 
Theorem and taking into account the 
first equations of 
\eqref{exist-10}-\eqref{exist-20},
 we have a unique solution
$U\in \mathcal{H}$.   
Since  this solution satisfies the 
system (\ref{exist-10})-(\ref{exist-20}) 
in a weak sense, from these equations,
 we can conclude that
$U\in D(\mathbb{B})$.  Finally,  for
$t=(u,v)$,  we have
\begin{equation*}
b(t,t)=\alpha\|Au\|^2+\alpha\|A^\frac{1}{2}v\|^2.
\end{equation*}
From second  equations of  
\eqref{exist-10}-\eqref{exist-20},    
applying Cauchy-Schwarz and Young 
inequalities to the second member 
of there exists
$C>0$ such that
\begin{equation*}
\alpha\|A u\|^2+ \alpha\|A^\frac{1}{2} 
v\|^2\leq C\|F\|^2_\mathcal{H}.
\end{equation*}
This inequality and the first equations of 
(\ref{exist-10})-(\ref{exist-20}) imply that
$\|U\|_\mathcal{H}\leq C\|F\|_\mathcal{H}$, 
then 
$0$ belongs to the resolvent set
$\rho(\mathbb{B})$. Consequently, 
from Theorem \ref{TLiuZ}  we have 
$\mathbb{B}$ as the generator of a 
contractions semigroup.  
As a consequence of the above 
Theorem\ref{TLiuZ} we have
\begin{theorem}
Given
$U_0\in\mathcal{H}$,  
there exists a unique weak solution
$U$ to  the problem \eqref{Fabstrata} 
satisfying 
$ 
U\in C([0, +\infty), \mathcal{H}).
$ 
Futhermore, if
$U_0\in  \hbox{D}(\mathbb{B}^k), \;
 k\in\mathbb{N}$, then the solution
$U$ of \eqref{Fabstrata} satisfies
$$
U\in \bigcap_{j=0}^kC^{k-j}([0,+\infty), 
 \hbox{D}(\mathbb{B}^j).
$$
\end{theorem}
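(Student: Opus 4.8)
The plan is to read the statement off as a direct corollary of the generation result just established, namely that $\mathbb{B}$ generates a $C_0$-semigroup of contractions $S(t)=e^{\mathbb{B}t}$ on $\mathcal{H}$. That fact was obtained through Theorem \ref{TLiuZ} after checking the three hypotheses: density of $D(\mathbb{B})$ in $\mathcal{H}$, dissipativity in \eqref{eqdissipative}, and $0\in\rho(\mathbb{B})$. Once $S(t)$ is in hand, the abstract Cauchy problem \eqref{Fabstrata} is solved by $U(t)=S(t)U_0$, and the two assertions of the theorem are the standard existence/uniqueness and regularity properties of $C_0$-semigroups, so essentially all of the mathematical content already lives in the preceding theorem.

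For the first assertion, given $U_0\in\mathcal{H}$ I would set $U(t):=S(t)U_0$. By the strong continuity of the semigroup the orbit $t\mapsto S(t)U_0$ lies in $C([0,+\infty),\mathcal{H})$, which provides the desired weak (mild) solution. Uniqueness follows from the general theory, since a $C_0$-semigroup is determined by its generator; alternatively, for a classical solution one differentiates $s\mapsto S(t-s)U(s)$ on $[0,t]$ and uses $\frac{d}{ds}S(t-s)U(s)=0$ to recover $U(t)=S(t)U_0$.

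For the regularity assertion I would proceed by induction on $k$, using the classical facts that for $U_0\in D(\mathbb{B})$ the orbit satisfies $S(t)U_0\in D(\mathbb{B})$ for all $t\ge 0$, that $t\mapsto S(t)U_0$ is continuously differentiable with $\frac{d}{dt}S(t)U_0=\mathbb{B}S(t)U_0=S(t)\mathbb{B}U_0$, and that $S(t)$ leaves each $D(\mathbb{B}^{m})$ invariant while commuting with $\mathbb{B}$ there. The base case $k=0$ is the first assertion. For the inductive step, if $U_0\in D(\mathbb{B}^{k})$ then $\mathbb{B}^{m}U_0\in D(\mathbb{B}^{k-m})$, and differentiating $j$ times yields $U^{(j)}(t)=S(t)\mathbb{B}^{j}U_0$ for $0\le j\le k$. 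Since $\mathbb{B}^{m}U_0\in D(\mathbb{B}^{k-m})\subseteq D(\mathbb{B}^{k-j})$ for every $m\le j$, and $S(t)$ preserves $D(\mathbb{B}^{k-j})$ with continuous dependence on $t$, each such derivative is continuous into $D(\mathbb{B}^{k-j})$, giving $U\in C^{j}([0,+\infty),D(\mathbb{B}^{k-j}))$. Relabeling $j\leftrightarrow k-j$ turns this into $U\in\bigcap_{j=0}^{k}C^{k-j}([0,+\infty),D(\mathbb{B}^{j}))$, exactly the claimed regularity.

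I do not expect a genuine obstacle: the real work (dissipativity and $0\in\rho(\mathbb{B})$) was done in the preceding theorem, so the present statement is a textbook consequence of $C_0$-semigroup theory, in the spirit of Liu--Zheng \cite{LiuZ}. The only point requiring a little care is the index bookkeeping in the induction, in particular ensuring that the $(k-j)$-fold differentiation lands in $D(\mathbb{B}^{j})$; this is controlled entirely by the identity $U^{(m)}(t)=S(t)\mathbb{B}^{m}U_0$ together with the nesting of the domains of the powers of $\mathbb{B}$ and their invariance under $S(t)$.
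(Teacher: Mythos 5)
Your proposal is correct and follows the same route as the paper, which simply presents this theorem as the standard consequence of the generation result obtained via Theorem \ref{TLiuZ}. The paper gives no further detail, so your explicit verification of the mild-solution formula $U(t)=S(t)U_0$ and the inductive bookkeeping for $U^{(m)}(t)=S(t)\mathbb{B}^{m}U_0$ is a faithful (and slightly more complete) rendering of the intended argument.
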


It follows that 
$C$ and
$C_\tau$ will denote a positive 
constant that assumes different 
values in different places and  
the coupling coefficient
$\gamma$  is  assumed positive. 
The results remain valid when this 
coefficient is negative.

\section{ Regularity Results}
\label{3.00}

In this section,
 we discuss the regularity of the semigroup
$S(t)=e^{\mathbb{B}t}$  in three 
subsections: First we analyze the 
lack of analyticity of
$S(t)$ in the region
$R_L:=\{ (\theta,\beta)/  
0\leq\theta\leq 1\; \rm{and}\; 0<\beta\leq 1\}
-\{(\frac{1}{2},1)\}$,    
then we study the Gevrey class of
$S(t)$ in the region
$R_G:=\{(\theta,\beta)\in (0,1]\times(0,1]\}$ 
and we end this section by 
showing the analyticity of
$S(t)$ at the point
$(\theta,\beta)=(\frac{1}{2} ,1)$.

For 
$\lambda\in  \R$ and
$F=(f_1,f_2,f_3,f_4)\in \mathcal{H}$  
the solution
$U=(u,v,w,z)\in\hbox{D}(\mathbb{B
 })$ of the stationary system
$(i\lambda I- \mathbb{B
 })U=F$ can be written in the form  
\begin{eqnarray}
i\lambda u-w &=& f_1\label{esp-10}\\
i\lambda v-z &=& f_2\label{esp-20}\\
i\lambda (I+\kappa A^\beta) w+ \alpha A^2 u
+\gamma Az &=&(I+\kappa A^\beta)f_3\label{esp-30}\\
i\lambda  z+\alpha A v-\gamma Aw+\delta 
A^{\theta} z&=& f_4.\label{esp-40}
\end{eqnarray}
We have
\begin{equation}\label{dis-10}
\delta\|A^{\frac{\theta}{2}}z\|^2=
\text{Re}\dual{(i\lambda I -\mathbb{B})U}
{U}_\mathcal{H}=\text{Re}\dual{F}{U}_\mathcal{H}\leq
 \|F\|_\mathcal{H}\|U\|_\mathcal{H}.
\end{equation}
From equations (\ref{esp-20}) and \eqref{dis-10},
 we have
\begin{eqnarray}\label{dis-10A}
\lambda^2\|A^{\frac{\theta}{2}}v\|^2 
&\leq& 
C\key{\|F\|_\mathcal{H}
\|U\|_\mathcal{H}+\|F\|^2_\mathcal{H}}.
\end{eqnarray} 
As
$\frac{\theta-2}{2}\leq  0\leq\frac{\theta}{2}$, 
taking into account the continuous embedding
$D(A^{\theta_2})\hookrightarrow D(A^{\theta_1})$,
$\theta_2>\theta_1$ and \eqref{dis-10}, 
we obtain
\begin{eqnarray}
\label{dis-10BC}
  \|A^\frac{\theta-2}{2}z\|^2 \leq 
  C\|F\|_\mathcal{H}\|U\|_\mathcal{H}
  \qquad{\rm and}\qquad
  \|z\|^2 \leq 
  C\|F\|_\mathcal{H}\|U\|_\mathcal{H}.
\end{eqnarray}

The following theorem characterizes
 the analyticity of
$S(t)$:

\begin{theorem}[see \cite{LiuZ}]
\label{LiuZAnaliticity}
    Let
$S(t)=e^{\mathbb{B}t}$ be
$C_0$-semigroup of contractions  
on a Hilbert space.  
Suppose that
    \begin{equation*}
    \rho(\mathbb{B})
    \supseteq\{ i\lambda; \; \lambda\in \R \} 
     \equiv i\R
    \end{equation*}
     Then
$S(t)$ is analytic if and only if
    \begin{equation}\label{Analiticity}
     \limsup\limits_{|\lambda|\to
        \infty}
    \|\lambda(i\lambda I-\mathbb{B})^{-1}
    \|_{\mathcal{L}(\mathcal{H})}<\infty
    \end{equation}
    holds.
    
\end{theorem}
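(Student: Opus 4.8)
The plan is to prove the classical characterization in both directions, the substantive one being the sufficiency of the resolvent bound. Throughout, set $M:=\limsup_{|\lambda|\to\infty}\|\lambda(i\lambda I-\mathbb{B})^{-1}\|_{\mathcal{L}(\mathcal{H})}$ and assume $M<\infty$. The idea is to upgrade the scalar estimate on $i\R$ to a sectorial resolvent estimate of the form $\|(\mu I-\mathbb{B})^{-1}\|\le C/|\mu|$ on an open sector containing $i\R$, which is the standard criterion for analyticity.

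First I would use the imaginary-axis bound to extend the resolvent into a neighborhood of $i\R$ by a Neumann-series perturbation. For $\mu=\sigma+i\lambda$ one factors $\mu I-\mathbb{B}=(i\lambda I-\mathbb{B})\bigl[I+\sigma(i\lambda I-\mathbb{B})^{-1}\bigr]$; whenever $|\sigma|\,\|(i\lambda I-\mathbb{B})^{-1}\|<1$, equivalently (for $|\lambda|$ large) $|\sigma|<|\lambda|/(2M)$, the bracket is invertible and
\begin{equation*}
\|(\mu I-\mathbb{B})^{-1}\|_{\mathcal{L}(\mathcal{H})}\le\frac{\|(i\lambda I-\mathbb{B})^{-1}\|}{1-|\sigma|\,\|(i\lambda I-\mathbb{B})^{-1}\|}\le\frac{2M}{|\lambda|}.
\end{equation*}
Since $|\lambda|\le|\mu|\le|\lambda|\sqrt{1+(2M)^{-2}}$ on this region, the bound reads $\|(\mu I-\mathbb{B})^{-1}\|\le C/|\mu|$ with $C$ depending only on $M$. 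Because $\sigma$ may be negative, this region reaches into the left half-plane and so opens strictly past the imaginary axis.

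Next I would assemble a full sector. On $\{\text{Re}\,\mu>0\}$ contractivity already gives $\|(\mu I-\mathbb{B})^{-1}\|\le 1/\text{Re}\,\mu$, and the hypothesis $i\R\subset\rho(\mathbb{B})$ together with continuity of the resolvent handles the bounded part near the origin by a compactness argument. Combining these three pieces yields a sector $\Sigma_\delta:=\{\mu\ne 0:\ |\arg\mu|<\tfrac{\pi}{2}+\delta\}\subset\rho(\mathbb{B})$ on which $\|(\mu I-\mathbb{B})^{-1}\|_{\mathcal{L}(\mathcal{H})}\le \tilde{C}/|\mu|$. With this sectorial estimate, the standard contour representation $S(t)=\frac{1}{2\pi i}\int_\Gamma e^{\mu t}(\mu I-\mathbb{B})^{-1}\,d\mu$ over a boundary contour $\Gamma$ of $\Sigma_\delta$ converges and extends holomorphically in $t$ to a neighborhood sector of the positive real axis, which is exactly analyticity of $S(t)$.

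For the converse, if $S(t)$ is analytic then the same classical theory provides a sector $\Sigma_\delta\subset\rho(\mathbb{B})$ with $\|(\mu I-\mathbb{B})^{-1}\|\le \tilde{C}/|\mu|$; specializing to $\mu=i\lambda$ (which lies in $\Sigma_\delta$ once $|\lambda|$ is large) gives $\|\lambda(i\lambda I-\mathbb{B})^{-1}\|\le\tilde{C}$, hence \eqref{Analiticity}. The main obstacle is the forward direction: one must track the $1/|\mu|$ decay of the resolvent \emph{uniformly} through the Neumann series, so that the assembled sectorial bound has exactly the homogeneity needed for the contour integral to produce a bounded analytic semigroup, rather than merely showing that the resolvent exists off the imaginary axis.
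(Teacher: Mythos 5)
This theorem is quoted in the paper directly from Liu--Zheng \cite{LiuZ} (Theorem 1.3.3 there) and the paper supplies no proof of its own, so there is nothing internal to compare against. Your argument is essentially the standard proof from that reference: the Neumann-series factorization $\mu I-\mathbb{B}=(i\lambda I-\mathbb{B})\bigl[I+\sigma(i\lambda I-\mathbb{B})^{-1}\bigr]$ to push the $O(1/|\lambda|)$ bound off the imaginary axis into a genuine sector, combined with the Hille--Yosida bound $\|(\mu I-\mathbb{B})^{-1}\|\le 1/\mathrm{Re}\,\mu$ in the right half-plane and the contour representation of $S(t)$; the converse by restricting the sectorial estimate to $i\R$ is likewise standard. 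The proof is correct, up to harmless bookkeeping (to get the stated bound $2M/|\lambda|$ you should require $|\sigma|\,\|(i\lambda I-\mathbb{B})^{-1}\|\le \tfrac12$ rather than $<1$, which only shrinks the sector angle and changes nothing essential).
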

 
\begin{remark}\label{ObsEquivAnaly}  
\rm
To show the \eqref{Analiticity} condition,  
it suffices to show that, given
$\tau>0$ there exists a constant
$C_\tau > 0$ such that the solutions of 
\eqref{esp-10}--\eqref{esp-40}, 
for
$|\lambda|>\tau$ satisfy the inequality
\begin{align}
\label{EquivAnaliticity}
& 
|\lambda|\|U\|_\mathcal{H}\leq 
C_\tau\|F\|_\mathcal{H} \ \
 \Longleftrightarrow  \\
 &
 \notag
 |\lambda|\|U\|_\mathcal{H}^2=
|\lambda|\|(i\lambda I-\mathbb{B})^{-1}
F\|_\mathcal{H}^2\leq 
C_\tau\|F\|_\mathcal{H}\|U\|_\mathcal{H}.
\end{align}
\end{remark}

Next, we demonstrate two lemmas that 
are essential to achieve the results 
of  following subsections results.

\begin{proposition}  \label{limsup} Let
$0\leq\theta\leq 1$ and
$0<\beta\leq 1$. 
The operator
$\mathbb{B}$ satisfies the following resolvent
estimate
\begin{equation}\label{ExponentialP}
 \limsup\limits_{|\lambda|\to
   \infty}   \|(i\lambda I-\mathbb{B})^{-1}
   \|_{\mathcal{L}(\mathcal{H})}<\infty.
\end{equation}
\end{proposition}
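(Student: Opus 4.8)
The plan is to verify the high-frequency resolvent bound \eqref{ExponentialP} in its equivalent quantitative form: to produce $\tau>0$ and $C>0$ such that every solution $U=(u,v,w,z)$ of \eqref{esp-10}--\eqref{esp-40} with $|\lambda|>\tau$ obeys $\|U\|_{\mathcal H}\le C\|F\|_{\mathcal H}$ (this is exactly $\limsup_{|\lambda|\to\infty}\|(i\lambda I-\mathbb B)^{-1}\|_{\mathcal L(\mathcal H)}<\infty$). I would argue directly, estimating each of the five terms of $\|U\|_{\mathcal H}^2=\alpha\|Au\|^2+\alpha\|A^{1/2}v\|^2+\|w\|^2+\kappa\|A^{\beta/2}w\|^2+\|z\|^2$ by quantities of the form $\epsilon\|U\|_{\mathcal H}^2+C_\epsilon(\|F\|_{\mathcal H}\|U\|_{\mathcal H}+\|F\|_{\mathcal H}^2)$, and then absorbing the $\epsilon\|U\|_{\mathcal H}^2$ contributions. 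The starting point is the dissipation identity \eqref{dis-10} together with \eqref{dis-10A} and \eqref{dis-10BC}, which already control $\|z\|$, $\|A^{\theta/2}z\|$ and $\|A^{(\theta-2)/2}z\|$ by $(\|F\|_{\mathcal H}\|U\|_{\mathcal H})^{1/2}$ and give $\|A^{\theta/2}v\|\lesssim|\lambda|^{-1}(\|F\|_{\mathcal H}\|U\|_{\mathcal H}+\|F\|_{\mathcal H}^2)^{1/2}$; these smallness bounds for $z$ and $v$ are the engine of all remaining estimates.

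Next I would recover the wave potential energy $\|A^{1/2}v\|$ by testing \eqref{esp-40} with $v$. Every term is harmless except the strong coupling term $\gamma\langle Aw,v\rangle$, whose naive estimate fails because the energy only controls $w$ up to $A^{\beta/2}$ with $\beta\le1$, whereas $Aw$ is first order. The remedy is to eliminate the velocities through $w=i\lambda u-f_1$ (from \eqref{esp-10}) and $i\lambda v=z+f_2$ (from \eqref{esp-20}): this rewrites $\gamma\langle Aw,v\rangle$ as $-\gamma\langle Au,z\rangle$ plus data terms, and $|\langle Au,z\rangle|\le\|Au\|\,\|z\|\lesssim\|F\|_{\mathcal H}^{1/2}\|U\|_{\mathcal H}^{3/2}$ is absorbed by Young's inequality (using $\|v\|\le C\|A^{1/2}v\|$ by Poincar\'e for the data terms). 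Testing \eqref{esp-30} with $u$ and performing the same substitution yields the equipartition relation $\alpha\|Au\|^2=\big(\|w\|^2+\kappa\|A^{\beta/2}w\|^2\big)+(\text{controllable})$, tying the plate potential energy to its kinetic energy.

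The hard part will be obtaining an \emph{absolute} bound on the plate kinetic energy $\|w\|^2+\kappa\|A^{\beta/2}w\|^2$. The equipartition relation only relates it to $\|Au\|^2$, so it cannot be absorbed, and testing \eqref{esp-40} directly against $w$ creates a dangerous term $\lambda^2\langle z,u\rangle$ once $w=i\lambda u-f_1$ is inserted. Since the damping enters only the electrical equation, the plate energy must be extracted indirectly, which is feasible precisely because the coupling operators $\gamma A$ are \emph{strong} (first order). I would therefore build a compensated coupling multiplier, testing \eqref{esp-30} and \eqref{esp-40} against $Au$, $v$ and the appropriate velocity so that the top-order contributions cancel, and dominate the surviving intermediate fractional powers of $w$ and $z$ — which lie strictly between the dissipation-controlled norm $A^{\theta/2}$ and the energy norms — by Lions' interpolation inequality, writing them as geometric means of a controlled weak norm and an energy-norm quantity and closing with Young. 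Keeping the $\lambda$-powers produced by interpolation bounded at high frequency, uniformly over $0\le\theta\le1$ and $0<\beta\le1$ (exploiting the $|\lambda|^{-1}$ gain on $v$ and the restriction $|\lambda|>\tau$), is the delicate bookkeeping and the genuine obstacle.

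Finally, summing the component estimates would give $\|U\|_{\mathcal H}^2\le\tfrac12\|U\|_{\mathcal H}^2+C\|F\|_{\mathcal H}\|U\|_{\mathcal H}+C\|F\|_{\mathcal H}^2$; absorbing the first term and solving the resulting quadratic inequality in $\|U\|_{\mathcal H}$ produces $\|U\|_{\mathcal H}\le C\|F\|_{\mathcal H}$ uniformly for $|\lambda|>\tau$, which is precisely \eqref{ExponentialP}.
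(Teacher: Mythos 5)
Your reduction to the quantitative bound, your use of the dissipation estimates \eqref{dis-10}--\eqref{dis-10BC}, the equipartition identity obtained by testing \eqref{esp-30} against $u$, and your treatment of $\alpha\|A^\frac{1}{2}v\|^2$ (rewriting $\gamma\langle Aw,v\rangle$ via $w=i\lambda u-f_1$ and $i\lambda v=z+f_2$, then absorbing $\langle Au,z\rangle$ with Young) all coincide with the paper's argument. But at the only genuinely hard point --- an absolute bound on the plate kinetic energy $\|w\|^2+\kappa\|A^\frac{\beta}{2}w\|^2$ --- your text stops being a proof: you announce a ``compensated coupling multiplier'' whose top-order terms are supposed to cancel, you never exhibit it, and you yourself label the ensuing bookkeeping ``the genuine obstacle.'' That step is the substance of the proposition, so as it stands the proposal has a gap exactly where the paper does its work.

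The missing device is concrete and requires no interpolation: the paper takes the duality product of \eqref{esp-40} with $(I+\kappa A^\beta)A^{-1}w$. By self-adjointness of the powers of $A$, the strong coupling term $-\gamma Aw$ paired with this multiplier contributes exactly $-\gamma\big(\|w\|^2+\kappa\|A^\frac{\beta}{2}w\|^2\big)$, i.e., the full plate kinetic energy with its natural weights. The dangerous term $\langle i\lambda z,(I+\kappa A^\beta)A^{-1}w\rangle$ is rewritten as $-\langle A^{-1}z,\, i\lambda(I+\kappa A^\beta)w\rangle$ and the whole combination $i\lambda(I+\kappa A^\beta)w$ is replaced, using \eqref{esp-30}, by $-\alpha A^2u-\gamma Az+(I+\kappa A^\beta)f_3$; this turns it into $\alpha\langle A^\frac{\theta}{2}z,A^{1-\frac{\theta}{2}}u\rangle+\gamma\|z\|^2$ plus data terms, so no power of $\lambda$ survives anywhere, while the remaining $v$-terms are converted through $i\lambda v=z+f_2$. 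The outcome is \eqref{Eq04EstAbeta2}, namely $\|w\|^2+\kappa\|A^\frac{\beta}{2}w\|^2\le\varepsilon\|A^{1-\frac{\theta}{2}}u\|^2+C\|F\|_\mathcal{H}\|U\|_\mathcal{H}$, which inserted into the equipartition identity \eqref{Exp1010AU} gives \eqref{Eq05EstAbeta2} and then \eqref{Eq06EstAbeta2}, and the proposition closes. Note finally that your fallback --- Lions interpolation between a weak norm carrying a $|\lambda|^{-1}$ gain and an energy norm --- is precisely the mechanism the paper deploys later, in Subsubsections \ref{3.2.1} and \ref{3.2.2}, and it inherently produces estimates of the form $|\lambda|^{\phi}\|U\|_\mathcal{H}\le C\|F\|_\mathcal{H}$ with $\phi<1$, together with parameter restrictions such as $2\le\theta+2\beta$; that is a Gevrey-class statement, strictly weaker than the $\lambda$-uniform bound \eqref{ExponentialP} required here on all of $[0,1]\times(0,1]$. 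So the interpolation route, besides being unexecuted, aims at the wrong target.
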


\begin{proof}[\bf Proof.]
Similarly to observation \eqref{ObsEquivAnaly},    
we will show
\begin{align}
\label{ExponentialP1}
&
\|U\|^2_\mathcal{H}
\leq C_\tau\|F\|_\mathcal{H}\|U\|_\mathcal{H} 
\qquad   \Longleftrightarrow \\
&
\notag
 \|U\|^2_\mathcal{H}=\alpha\|Au\|^2+
\alpha\|A^\frac{1}{2}v\|^2+\|w\|^2+
\kappa\|A^\frac{\beta}{2}w\|^2+\|z\|^2\leq 
C_\tau\|F\|_\mathcal{H}\|U\|_\mathcal{H}.
\end{align}
 
 \smallskip
 
 \noindent
{\bf  Estimate of terms: $\alpha\|Au\|^2$   
and $\|w\|^2+\kappa\|A^\frac{\beta}{2}w\|^2$.}
Taking the duality product between  
equation (\ref{esp-30}) and $u$ and 
using the equation (\ref{esp-10}). 
Taking advantage 
of the self-adjointness of the powers 
of the operator $A$, we obtain  
\begin{align}
\nonumber
 \alpha\|Au\|^2 
& =  -\gamma\dual{z}{Au}+\dual{w}{i\lambda u}
+\kappa\dual{A^\beta w}{i\lambda u}
+\langle f_3, u\rangle+\kappa\dual
{A^\frac{\beta}{2} f_3}{A^\frac{\beta}{2} u}\\
\label{Exp1010AU}
& =  -\gamma\dual{z}{Au}+
\|w\|^2+\kappa\|A^\frac{\beta}{2}w\|^2+\dual{w}
{f_1} \\
&
\notag
+\kappa
\dual{A^\frac{\beta}{2}w}{A^\frac{\beta}{2}f_1}  
+\dual{f_3}{u} +\kappa\dual{A^\frac{\beta}{2} 
f_3}{A^\frac{\beta}{2} u}.
\end{align}

On the other hand,  performing the 
duality product of \eqref{esp-40}  and   
$(I+\kappa A^\beta)A^{-1}w$,   
using the identities \eqref{esp-10} and  
\eqref{esp-20} and  in addition to take advantage  
of the self-adjointness of the powers of 
the operator
$A$,  we obtain
\begin{align*}
\gamma\kappa\|A^\frac{\beta}{2}w\|^2  
& =  -\gamma\|w\|^2-
\dual{A^{-1}z}{i\lambda(I+\kappa A^\beta)w}+
\alpha\dual{A^\frac{\theta}{2}v}
{A^{-\frac{\theta}{2}}(i\lambda u-f_1)}\\
&   +\alpha\kappa\dual
{A^\frac{\theta}{2}v}{A^{\beta-\frac{\theta}{2}}
(i\lambda u-f_1)}+\delta\dual{A^\frac{\theta}
{2}z}{(I+\kappa A^\beta)A^{\frac{\theta}{2}-1}w} \\
&
-\dual{f_4}{(I+\kappa A^\beta)A^{-1}w}.
\end{align*}
Therefore,
\begin{align}
\nonumber
\gamma\kappa\|A^\frac{\beta}{2}w\|^2  
& =  
 -\dual{A^{-1}z}
{i\lambda(I+\kappa A^\beta)w}-\alpha\dual
{A^\frac{\theta}{2}(z+f_2)}{A^{-\frac{\theta}{2}}u}
 \\
 \nonumber
 &
 -\alpha\dual{A^\frac{\theta}{2}v}
{A^{-\frac{\theta}{2}} f_1} 
-\alpha\kappa\dual{A^\frac{\theta}{2}(z+f_2)}
{A^{\beta-\frac{\theta}{2}} u} \\
 \nonumber
 & 
 -\alpha\kappa\dual
{A^\frac{\theta}{2}v}{A^{\beta-\frac{\theta}{2}}f_1} 
+\delta\dual{A^\frac{\theta}{2}z}
{A^{\frac{\theta}{2}-1}w}\\
\label{Eq01EstAbeta2}
&  
+\delta\kappa\dual{A^\frac{\theta}{2}z}
{A^{\beta+\frac{\theta}{2}-1}w}-\dual{f_4}
{(I+\kappa A^\beta)A^{-1}w}
-\gamma\|w\|^2.
\end{align}
From \eqref{esp-30}, we have
$$
i\lambda(I+\kappa A^\beta)w=-\alpha A^2u
-\gamma Az+(I+\kappa A^\beta)f_3 , 
$$ 
and estimate \eqref{dis-10},    from
$\varepsilon>0$, exist
$C_\varepsilon>0$ such that 
\begin{align}
\nonumber
&
|\dual{A^{-1}z}{i\lambda(I+\kappa A^\beta)w}| \\
\notag 
&=  |\dual{A^{-1}z}{-\alpha A^2u-
\gamma Az+(I+\kappa A^\beta)f_3}|\\
\nonumber
& =  |-\alpha\dual{A^\frac{\theta}{2}z}
{A^{1-\frac{\theta}{2}}u}-\gamma\|z\|^2+\dual{z}
{(I+\kappa A^\beta)A^{-1}f_3}|\\
\label{Eq02EstAbeta2}
&\leq  C_\varepsilon\|A^\frac{\theta}{2}z\|^2
+\varepsilon\|A^{1-\frac{\theta}{2}}u\|^2
+C\|F\|_\mathcal{H}\|U\|_\mathcal{H}
\end{align}
Now,  using estimate \eqref{Eq02EstAbeta2} 
in \eqref{Eq01EstAbeta2},    
taking account estimate  \eqref{dis-10},  
applying Cauchy-Schwarz, Young inequalities, 
we have
\begin{align}
\nonumber
&
\gamma\kappa\|A^\frac{\beta}{2}w\|^2 
+\gamma\|w\|^2  \\
\notag
& \leq   
C_\varepsilon\|A^\frac{\theta}{2}z\|^2
+\varepsilon\|A^{1-\frac{\theta}{2}}u\|^2+
C\|F\|_\mathcal{H}\|U\|_\mathcal{H}+
C|\dual{A^\frac{\theta}{2} z}
{A^{-\frac{\theta}{2}}u}|\\
\nonumber
&  
  +C|\dual{A^\frac{\theta}{2}f_2}
{A^{-\frac{\theta}{2}}u}|+C\|A^\frac{1}{2}
v\|\|A^{-\frac{1}{2}}f_1\| +
C|\dual{A^\frac{\theta}{2}z}
{A^{\beta-\frac{\theta}{2}}u}|\\
\nonumber
&    +
C\|A^\frac{1}{2}f_2\|\|A^{\beta-\frac{1}
{2}}u\|+C\|A^\frac{1}{2}v\| 
\|A^{\beta-\frac{1}{2}}f_1\|+
C|\dual{A^\frac{\theta}{2}z}
{A^{\frac{\theta}{2}-1}w}|\\
\label{Eq03EstAbeta2}
&  +C|\dual
{A^\frac{\theta}{2}z}{A^{\beta+\frac{\theta}{2}-1}w}
|+C\|f_4\|\|A^{-1}w\|+ C\|f_4\|\|A^{\beta-1}w\|.
\end{align}
 From
$-\frac{\theta}{2}\leq\beta-\frac{\theta}{2}
\leq 1-\frac{\theta}{2}$,
$\beta-\frac{1}{2}\leq 1$ and
$\frac{\theta}{2}-1\leq\beta+\frac{\theta}{2}
-1\leq\frac{\beta}{2}$,  
taking into account the continuous embedding  
$D(A^{\theta_2}) \hookrightarrow 
D(A^{\theta_1}),\;\theta_2>\theta_1$,  
$\beta\leq1$ we have,    for 
$\varepsilon>0$, there exists a 
positive constant
$C_\varepsilon$, independent of
$\lambda$ such that
\begin{align}
\label{Eq04EstAbeta2}
\|w\|^2+\kappa\|A^\frac{\beta}{2}w\|^2  
  \leq   \varepsilon\|A^{1-\frac{\theta}
{2}}u\|^2+C\|F\|_\mathcal{H}\|U\|_\mathcal{H}.
\end{align}
Now, from estimate \eqref{Eq04EstAbeta2} 
in \eqref{Exp1010AU} and using 
Cauchy-Schwarz and Young inequalities,
 we have
\begin{align*}
\|Au\|^2 
& \leq   C|\dual{A^\frac{\theta}{2}z}
{A^{1-\frac{\theta}
{2}}u}|+C|\dual{A^\frac{\beta}{2}w}
{A^\frac{\beta}{2}f_1}+\dual{f_3}{u} 
+\dual{A^\frac{\beta}{2} f_3}
{A^\frac{\beta}{2} u}|   \\
&  +C|\dual{w}{f_1}|+
\varepsilon\|A^{1-\frac{\theta}{2}}u\|^2
+C\|F\|_\mathcal{H}\|U\|_\mathcal{H}.
\end{align*}
Then, from
$1-\frac{\theta}{2}\leq 1$, 
taking into account the continuous embedding  
$D(A^{\theta_2}) \hookrightarrow D(A^{\theta_1}),
\;\theta_2>\theta_1$, we have
\begin{equation}\label{Eq05EstAbeta2}
\|Au\|^2\leq C\|F\|_\mathcal{H}\|U\|_\mathcal{H}.
\end{equation}
Finally, using  \eqref{Eq05EstAbeta2} in 
\eqref{Eq04EstAbeta2}, obtain
\begin{equation}\label{Eq06EstAbeta2}
\|w\|^2+\kappa\|A^\frac{\beta}{2}w\|^2 
\leq C\|F\|_\mathcal{H}\|U\|_\mathcal{H}.
\end{equation}

 \smallskip
 
 \noindent
{\bf Estimate of term 
$\alpha\|A^\frac{1}{2}v\|^2$.}
Similarly, applying the duality product 
between the  equation (\ref{esp-40}) with 
$v$,  
using the equation (\ref{esp-20}) and in 
addition to the self-adjointness of the 
powers of the operator $A$ to rewrite 
some terms. Then, we have
\begin{align}
\notag
\alpha\|A^\frac{1}{2}v\|^2 
&=  \gamma\dual{Aw}{v}+\|z\|^2
-\delta \dual{A^\frac{\theta}{2}z}
{A^\frac{\theta}{2}v}+\langle z, 
f_2\rangle+\dual{f_4}{v}\\
 \nonumber
 &=   -\gamma\dual{A^{1-\frac{\theta}
 {2}}u}{A^\frac{\theta}{2}z}-
 \gamma\dual{Au}{f_2}-\gamma\dual{Af_1}{v} 
 \\
 & 
 -\delta \dual{A^\frac{\theta}{2}
 (i\lambda v-f_2)}{A^\frac{\theta}{2}v} 
\label{Exp1012}
   +\|z\|^2+\langle z, f_2\rangle+
 \langle f_4, v\rangle,
\end{align}
now,  as 
$$
|-\gamma\dual{A^{1-\frac{\theta}{2}}u}
{A^\frac{\theta}{2}z}|\leq 
\varepsilon\|A^{1-\frac{\theta}{2}}u\|^2
+C_\varepsilon\|A^\frac{\theta}{2}z\|^2
$$   
and for $\varepsilon>0$ exists 
$C_\varepsilon>0$, 
such that  
$$
 |-\delta\dual{A^\frac{\theta}{2}z}
{A^\frac{\theta}{2}v}|\leq 
C_\varepsilon\|A^\frac{\theta}{2}z\|^2
+\varepsilon\|{A^\frac{\theta}{2}}v\|^2\leq   
C_\varepsilon\|A^\frac{\theta}{2}z\|^2
+\varepsilon\|{A^\frac{1}{2}}v\|^2 , 
$$    
using the estimative 
\eqref{Eq05EstAbeta2} 
and applying  Cauchy-Schwarz inequality and 
Young inequalities  and continuous embedding for 
$\theta-\frac{1}{2}\leq\frac{1}{2}$  in 
\eqref{Exp1012}, we have the inequality
\begin{equation}
\label{Exp1016}
\alpha\|A^\frac{1}{2}v\|^2 \leq 
C \|F\|_\mathcal{H}\|U\|_\mathcal{H}
\ \hbox{ for }  0\leq\theta\leq 
1 \text{ and }\   0<\beta\leq 1.
\end{equation}

 \smallskip
 
 \noindent
{\bf   Estimate of $\|U\|^2$.} Therefore,  estimates 
\eqref{dis-10BC}$_{(2)}$, \eqref{Eq05EstAbeta2}, 
\eqref{Eq06EstAbeta2}  and  \eqref{Exp1016}, 
one easily derives 
\begin{equation*}
\|U\|^2_\mathcal{H}\leq 
C\|F\|_\mathcal{H}\|U\|_\mathcal{H}
 \text{ for }\   0\leq\theta\leq 
1  \text{and }  0<\beta\leq 1.
\end{equation*}
Then, as $U=(i\lambda I-\mathbb{B})^{-1}F$,  
we obtain
\begin{equation}\label{Exp1017}
\sup\limits_{F\in\mathcal{H},}   
\dfrac{\|(i\lambda I-\mathbb{B})^{-1}
F\|_\mathcal{H}}{\|F\|_\mathcal{H}}\leq     
\sup\limits_{F\in\mathcal{H}}   
C \text{  for }  0\leq\theta\leq 
1 \text{ and }  0<\beta\leq 1.
\end{equation}
Thus,   we conclude the proof of this proposition.
\end{proof}

\begin{proposition} \label{EixoIm}
\label{iR}
Let
$\varrho(\mathbb{B})$ be the resolvent set of operator
$\mathbb{B}$. Then
\begin{equation}
i \mathbb{R}\subset\varrho(\mathbb{B}).
\end{equation}
\end{proposition}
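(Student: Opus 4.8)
The plan is to combine the two facts already in hand---that $0\in\rho(\mathbb{B})$ (established in Section~\ref{2.00}) and the dissipativity identity \eqref{dis-10}---with the compactness of the resolvent, thereby reducing the statement to the absence of purely imaginary eigenvalues. Since the operator $A=-\Delta$ of \eqref{Omenoslaplaciano} has compact inverse, the domain $D(\mathbb{B})$ embeds compactly into $\mathcal{H}$; together with $0\in\rho(\mathbb{B})$ this shows that $\mathbb{B}^{-1}$ is compact, so $\sigma(\mathbb{B})$ consists only of isolated eigenvalues of finite multiplicity. Consequently, for each $\lambda\in\mathbb{R}$ one has $i\lambda\in\varrho(\mathbb{B})$ as soon as $i\lambda$ fails to be an eigenvalue, and the whole proposition reduces to showing that the homogeneous problem $(i\lambda I-\mathbb{B})U=0$ admits only the trivial solution.

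To carry this out, I would set $F=0$ in the stationary system \eqref{esp-10}--\eqref{esp-40} and suppose $U=(u,v,w,z)\in D(\mathbb{B})$ solves it for some $\lambda\in\mathbb{R}$. Taking the real part of the inner product of $(i\lambda I-\mathbb{B})U=0$ with $U$, the dissipativity computation \eqref{dis-10} has vanishing right-hand side and collapses to $\delta\|A^{\theta/2}z\|^2=0$; since $A^{\theta/2}$ is injective, this forces $z=0$. The value $\lambda=0$ needs no further work, because $0\in\rho(\mathbb{B})$ already excludes it from the spectrum, so I may assume $\lambda\neq0$.

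With $z=0$ in hand, the remaining components fall in a chain. Equation \eqref{esp-20} gives $i\lambda v=z=0$, hence $v=0$. Substituting $v=z=0$ into \eqref{esp-40} leaves $-\gamma Aw=0$; as $\gamma\neq0$ and $A$ is injective, this yields $w=0$. Finally \eqref{esp-10} gives $i\lambda u=w=0$, so $u=0$, and therefore $U=0$. This contradicts $U\neq0$, so $\mathbb{B}$ has no eigenvalue on the imaginary axis, and the spectral reduction above gives $i\mathbb{R}\subset\varrho(\mathbb{B})$.

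The step I expect to carry the real content is not any computation---the back-substitution is immediate precisely because the strong coupling term $\gamma Aw$ in \eqref{esp-40} lets one recover $w$ once $v$ and $z$ are known to vanish---but rather the spectral reduction itself: one must be certain that ruling out imaginary eigenvalues suffices, which is exactly what compactness of the resolvent supplies. Should one prefer to avoid compactness, the alternative is a contradiction argument: assuming some $i\lambda_{0}$ lies on the boundary of $\varrho(\mathbb{B})$, one extracts an approximate-eigenvector sequence, uses \eqref{dis-10} to drive its dissipative component to zero, and passes to the limit to produce a genuine eigenvector contradicting the kernel computation above, with the uniform bound of Proposition~\ref{limsup} ensuring that such a boundary point would occur at a finite height.
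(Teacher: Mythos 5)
Your argument hinges on the claim that $\mathbb{B}^{-1}$ is compact (equivalently, that $D(\mathbb{B})$ embeds compactly into $\mathcal{H}$), and that claim is false at $\theta=1$, a case the proposition must cover. From \eqref{exist-10}--\eqref{exist-20}, the second component of $U=\mathbb{B}^{-1}F$ is
\begin{equation*}
v=\frac{1}{\alpha}\bigl(\gamma f_1-\delta A^{\theta-1}f_2-A^{-1}f_4\bigr),
\end{equation*}
and for $\theta=1$ the middle contribution is $f_2\mapsto-\tfrac{\delta}{\alpha}f_2$, a nonzero multiple of the identity from the second component of $\mathcal{H}$ to itself; since all remaining pieces of $\mathbb{B}^{-1}$ are compact, $\mathbb{B}^{-1}$ cannot be compact. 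The root cause is visible in \eqref{dominioA}: membership in $D(\mathbb{B})$ constrains only the combination $-\alpha v-\delta A^{\theta-1}z$, not $v$ itself. Concretely, for $\theta=1$ the sequence $U_n=\bigl(-\tfrac{\gamma}{\alpha}A^{-1}z_n,\,-\tfrac{\delta}{\alpha}z_n,\,0,\,z_n\bigr)$ with $z_n=\sigma_n^{-1/2}e_n$ ($e_n$ the $L^2$-normalized eigenvectors of $A$) lies in $D(\mathbb{B})$, is bounded in graph norm, yet has no subsequence converging in $\mathcal{H}$, because $A^{1/2}v_n=-\tfrac{\delta}{\alpha}e_n$. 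Consequently you cannot reduce the proposition to the absence of imaginary eigenvalues when $\theta=1$: the spectrum need not be purely point spectrum (this is the familiar Kelvin--Voigt/strong-damping phenomenon; compare the Han--Liu essential-spectrum discussion quoted in the introduction). Your kernel computation is correct, and your route does work for $\theta\in[0,1)$, where the componentwise verification sketched above shows $\mathbb{B}^{-1}$ really is compact (a verification that itself is needed, not a one-line consequence of compactness of $A^{-1}$); but the case $\theta=1$ is left unproved. The fallback you sketch does not repair this: passing from an approximate-eigenvector sequence to a genuine eigenvector in the limit is again precisely a compactness step.

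The paper's proof is built to avoid this. Assuming $i\mathbb{R}\not\subset\rho(\mathbb{B})$, it takes the largest $\lambda_0>0$ with $(-i\lambda_0,i\lambda_0)\subset\rho(\mathbb{B})$, picks real $\lambda_n\to\lambda_0$ and unit vectors $U_n\in D(\mathbb{B})$ with $F_n=(i\lambda_n I-\mathbb{B})U_n\to 0$, and then invokes the component estimates \eqref{Eq05EstAbeta2}, \eqref{Exp1016}, \eqref{dis-10BC} and \eqref{Eq06EstAbeta2} established in the proof of Proposition~\ref{limsup} --- estimates valid for solutions of \eqref{esp-10}--\eqref{esp-40} for every real $\lambda$ and every $0\leq\theta\leq1$, $0<\beta\leq1$, with constants independent of $\lambda$ --- to conclude $\|U_n\|_{\mathcal{H}}^2\leq C\|F_n\|_{\mathcal{H}}\|U_n\|_{\mathcal{H}}\to0$, contradicting $\|U_n\|_{\mathcal{H}}=1$. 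No eigenvector and no compactness ever enter; the uniform resolvent-type estimates do all the work. To salvage your approach you would need such an estimate (or some other exclusion of imaginary essential spectrum) exactly in the case $\theta=1$, at which point you have essentially reproduced the paper's argument.
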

 
\begin{proof}[\bf Proof.]
To prove
$i\R\subset\rho(\mathbb{B})$ we will 
argue by reductio ad absurdum.  Suppose that
$i\R\not\subset \rho(\mathbb{B})$.  
As
$0\in\rho(\mathbb{B})$ and
$\rho(\mathbb{B})$ is open, we consider 
the highest positive number
$\lambda_0$ such that the
$ ( -i\lambda_0,i\lambda_0) \subset\rho(\mathbb{B})$,
 then
$i\lambda_0$ or
$-i\lambda_0$ is an element of the spectrum
$\sigma(\mathbb{B})$. We Suppose
$i\lambda_0\in \sigma(\mathbb{B})$ 
(if
$-i\lambda_0\in \sigma(\mathbb{B})$ 
the proceeding is similar). Then, for
$0<\delta<\lambda_0$ there exists  
a sequence of real numbers
$(\lambda_n)$, with
$\delta\leq\lambda_n<\lambda_0$,
$\lambda_n\con \lambda_0$, and a 
vector sequence 
$U_n=(u_n,v_n,w_n,z_n)\in D(\mathbb{B})$ 
with  unitary norms, such that
\begin{equation*}
\|(i\lambda_nI-\mathbb{B}) 
U_n\|_\mathcal{H}=\|F_n\|_\mathcal{H}\con 0,
\end{equation*}
as
$n\con \infty$.  From  the estimates  
\eqref{Eq05EstAbeta2}   and  \eqref{Exp1016} 
for
$0\leq\theta\leq 1$ and
$0<\beta\leq 1$, we have
\begin{equation*}
\alpha \|A u_n\|^2 \leq 
C\|F_n\|_\mathcal{H}
\|U_n\|_\mathcal{H}\to 0   \text{ and }  
 \alpha\|A^ { \frac 12 }v_n\|^2 \leq  
C\|F_n\|_\mathcal{H}\|U_n\|_\mathcal{H}\to 0.
\end{equation*}
In addition to  the estimates  
\eqref{dis-10BC}$_{(2)}$ and  
\eqref{Eq06EstAbeta2} for
$0\leq\theta\leq 1$ and
$0<\beta\leq 1$, we have
\begin{equation*}
\kappa\|A^\frac{\beta}{2}w_n\|^2 
+\|w_n\|^2+\|z_n\|^2 \con 0.
\end{equation*}
Consequently,  
\begin{equation*}
\alpha\|Au_n\|^2 +\alpha\|A^ { \frac 12 }v_n\|^2
+\|w_n\|^2+\kappa\|A^\frac{\beta}{2}w_n\|^2
+\|z_n\|^2 \con 0.
\end{equation*}
Therefore, we have 
$\|U_n\|_\mathcal{H}\con 0$ but this is 
absurd, since
$\|U_n\|_\mathcal{H}=1$ for all
$n\in\N$. Thus,
$i\R\subset \rho(\mathbb{B})$.  
Thereby,   
we conclude the proof of this proposition.
\end{proof}
 
\subsection{$S(t)=e^{\mathbb{B}t}$ 
is not analytic 
in the  region
$R_L$}
\label{3.1}
In this subsection, using 
Theorem \ref{LiuZAnaliticity}, 
 we will show that
$S(t)$ is not analytic in the  region
$R_L$.
\tikzstyle{polygon}=[very thick]
\tikzstyle{point}=[ultra thick,draw=gray,cross out,inner 
sep=0pt,minimum width=4pt,minimum height=4pt,]
\tikzstyle{line}=[red]
\begin{center}
    \begin{tikzpicture}[scale=3, rotate=0]
    \label{Figura01}
 \foreach \x in {0,1}
    \draw (\x cm,1pt) -- (\x cm,-1pt) 
    node[anchor=north] {$\x$};
    \foreach \y in {0,1}
    \draw[thick,->] (0,0) -- (1.25,0)
    node[anchor=north west] {$\theta$ axis};
    \draw[thick,->] (0,0) -- (0,1.25) 
    node[anchor=south east] {$\beta$ axis};
     \draw (0.5, 1.12) node[anchor=center] 
     {$( \frac{1}{2},1)$};
     \draw (-0.1, 1) node[anchor=center] {$1$};
\coordinate[label=left:] (a) at (0,0);
    \coordinate[label=right:] (b) at (1,0);
    \coordinate[label=right:] (c) at (1,1);
    \coordinate[label=left:] (d) at (0,1);

    \draw[rectangle, fill=black!20!white] 
    (a) -- (b) -- (c) --  (d) -- (a) -- cycle;
    \draw[black!100!black, ultra thick]
     (0, 0) -- (0, 1);
    \draw[black!100!black, ultra thick]
     (0, 1) -- (1, 1);
   \draw[white,  dashed] (0, 0) -- (1, 0);
   \draw[black!65!black, ultra thick] 
   (1, 0) -- (1, 1);
     \draw[fill=white!95!black, dashed] 
     (1,0) circle (0.02);
   \draw[fill=white!95!black, dashed]
   (0,0) circle (0.02);
    \draw[fill=white!95!black, dashed] 
    (1/2,1) circle (0.02);
 \draw[fill=black!100!white, dashed] 
 (0,1) circle (0.025);
 \draw[fill=black!100!white, dashed] 
 (1,1) circle (0.025);
    \end{tikzpicture}

    {\bf Fig.  01}: Region
$R_L$ of lack of Analyticity.
\end{center}
 In the proof, we will use the 
 \eqref{EquivAnaliticity} equivalence of 
 the theorem \eqref{LiuZAnaliticity}, 
 note that non-verification of identity 
 \eqref{EquivAnaliticity} implies the lack 
 of analyticity of the associated semigroup
$S(t)=e^{\mathbb{B}t}$.

\begin{theorem}
\label{lack KV and structural} Let
$S(t)=e^{t\mathbb{B}}$ be the
$C_{0}
$-semigroup of 
contractions over the Hilbert space
$\mathcal{H}$ associated
with the system 
\eqref{ISplacas-20}-\eqref{ISplacas-40} 
that is not analytic when
$(\theta,\beta) \in R_L$.
\end{theorem}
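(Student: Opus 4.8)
The plan is to disprove the resolvent estimate \eqref{EquivAnaliticity} along a suitable sequence, so that Theorem \ref{LiuZAnaliticity} forces $S(t)$ to be non-analytic. Since Proposition \ref{EixoIm} already gives $i\R\subset\rho(\mathbb{B})$, the spectral hypothesis of Theorem \ref{LiuZAnaliticity} is in force, and by Remark \ref{ObsEquivAnaly} it suffices to produce a real sequence $\lambda_n\to\infty$ and data $0\neq F_n\in\mathcal{H}$ for which the solutions $U_n=(i\lambda_n I-\mathbb{B})^{-1}F_n$ of \eqref{esp-10}--\eqref{esp-40} satisfy $|\lambda_n|\,\|U_n\|_\mathcal{H}/\|F_n\|_\mathcal{H}\to\infty$.

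First I would diagonalize along the spectrum of $A$. Let $(\mu_n,e_n)$ be the eigenpairs of $A$, with $\mu_n\to\infty$ and $\|e_n\|=1$, and look for $U_n=(a_n,b_n,c_n,d_n)e_n$ with $F_n$ also supported on $e_n$. Using \eqref{esp-10}--\eqref{esp-20} to eliminate $c_n,d_n$, the pair \eqref{esp-30}--\eqref{esp-40} collapses to a $2\times2$ linear system in $(a_n,b_n)$ whose determinant is $D_n=(P_nR_n-Q_n^2)+iP_nS_n$, where $P_n=\alpha\mu_n^2-\lambda_n^2(1+\kappa\mu_n^\beta)$, $Q_n=\gamma\mu_n\lambda_n$, $R_n=\alpha\mu_n-\lambda_n^2$ and $S_n=\delta\mu_n^\theta\lambda_n$ are all real; the equation $P_nR_n-Q_n^2=0$ is precisely the real part of the dispersion relation of \eqref{ISplacas-20}--\eqref{ISplacas-25}.

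The core estimate comes from resonating the high-frequency (plate) branch. Taking $F_n=(0,0,0,e_n)$, so that $\|F_n\|_\mathcal{H}=1$, I would choose $\lambda_n>0$ as the root of $P_nR_n-Q_n^2=0$ with $\lambda_n^2\sim(\alpha+\gamma^2)\mu_n^{2-\beta}/\kappa$, i.e. $\lambda_n\sim\mu_n^{1-\beta/2}$. Along this choice $D_n=iP_nS_n$, and solving the reduced system explicitly gives $|a_n|=Q_n/|D_n|$ and $|b_n|=|P_n|/|D_n|$; feeding these into $\|U_n\|_\mathcal{H}^2=\bigl(\alpha\mu_n^2+\lambda_n^2(1+\kappa\mu_n^\beta)\bigr)|a_n|^2+(\alpha\mu_n+\lambda_n^2)|b_n|^2$ and tracking the dominant powers of $\mu_n$ yields $\|U_n\|_\mathcal{H}^2\sim\mu_n^{-2\theta}$, hence $|\lambda_n|\,\|U_n\|_\mathcal{H}\sim\mu_n^{(2-\beta-2\theta)/2}\to\infty$ whenever $2\theta+\beta<2$. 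The companion low-frequency (wave) resonance $\lambda_n^2\sim\alpha^2\mu_n/(\alpha+\gamma^2)$, again with $F_n=(0,0,0,e_n)$, produces $|\lambda_n|\,\|U_n\|_\mathcal{H}\sim\mu_n^{(1-2\theta)/2}\to\infty$ whenever $\theta<1/2$, and together with the previous regime this already rules out analyticity on a large part of $R_L$.

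The hard part will be the transition/over-damped regime, where the two scalings above only give a bounded ratio: there the oscillatory resonances degenerate and the obstruction must instead be read off from the near-coalescence of the two real modes of the block, whose modal matrix $\begin{pmatrix}1&1\\\nu_n^{+}&\nu_n^{-}\end{pmatrix}$ becomes increasingly ill-conditioned as $\mu_n\to\infty$. In this regime I would expand $D_n$ to the next order around its minimizing frequency, locate $\lambda_n$ where $|D_n|$ is smallest, and show that the resulting ratio still diverges at every parameter of $R_L$ except the single point $(\theta,\beta)=(1/2,1)$, which is exactly where the plate and the wave branches are simultaneously critically (and sectorially) damped — consistent with the $\beta=0$ picture of \cite{Suarez}, where analyticity survives only at $\theta=1$. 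Care is needed with the borderline powers of $\mu_n$ and with the genericity of $\kappa$ (for instance $\kappa\neq1$ when $\beta=1$) used to keep $P_n$ of the expected order $\mu_n^2$.
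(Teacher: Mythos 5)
Your modal reduction is sound: the $2\times2$ system, the determinant $D_n=(P_nR_n-Q_n^2)+iP_nS_n$, and your two resonance computations check out, so Cases A and B genuinely rule out analyticity on $\{2\theta+\beta<2\}\cup\{\theta<1/2\}$ (this overlaps the paper's Case 1, which instead forces in the \emph{third} component, $F_n=(0,0,-e_n,0)$, and uses the small root of $q_n$). The genuine gap is the remaining region $\{2\theta+\beta\geq 2,\ \theta\geq 1/2\}\setminus\{(1/2,1)\}$ --- it contains, e.g., $(\theta,\beta)=(1,\tfrac12)$ and the whole segment $\beta=1$, $\theta\in(\tfrac12,1]$ --- and there your proposed continuation (keep $F_n=(0,0,0,e_n)$, expand $D_n$ and sit where $|D_n|$ is smallest) cannot work, for a structural reason: with the datum in the electric equation the response amplitudes are $|a_n|=Q_n/|D_n|$ and $|b_n|=|P_n|/|D_n|$, so the damping $S_n$ never appears in a numerator (it only enters through the cofactor $R_n+iS_n$, which multiplies a third-component datum). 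Explicitly,
\begin{equation*}
|\lambda|^2\|U_n\|_\mathcal{H}^2
=\frac{\lambda^2\bigl[G_1Q_n^2+G_2P_n^2\bigr]}{(P_nR_n-Q_n^2)^2+P_n^2S_n^2},
\qquad G_1=\alpha\mu_n^2+\lambda^2(1+\kappa\mu_n^\beta),\quad G_2=\alpha\mu_n+\lambda^2,
\end{equation*}
and at the plate resonance $P_n=0$ the numerator degenerates together with the denominator: one gets exactly $|\lambda|\,\|U_n\|_\mathcal{H}=\sqrt{2\alpha}/\gamma$. Detuning does not help: writing $P_n=-\alpha\mu_n^2\epsilon$ and sweeping all scalings $\lambda^2\sim\mu_n^r$ (both branches and everything between), the ratio above stays uniformly bounded once $2\theta+\beta\geq2$ and $\theta\geq\tfrac12$. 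So no choice of $\lambda_n$ rescues this forcing; the obstruction is the direction of $F_n$, not the fineness of the expansion of $D_n$.

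The paper closes precisely this region by exploiting the other column of the adjugate: with $F_n=(0,0,-e_n,0)$ the plate amplitude is $\mu_n=(1+\kappa\sigma_n^\beta)\{p_{2,n}-i\delta\lambda_n\sigma_n^\theta\}/D_n$ (see \eqref{Muexpre01A}, \eqref{P1P2A}), whose numerator \emph{does} carry the damping term $\delta\lambda_n\sigma_n^\theta$; evaluating at the root of $p_{1,n}(\lambda_n^2)=0$ (so that $D_n=-\gamma^2\lambda_n^2\sigma_n^2$) gives the paper's Cases 3 and 4, i.e. $|\lambda_n|\|U_n\|_\mathcal{H}\gtrsim|\lambda_n|^{2\theta-1}$ for $\beta=1$ and $\gtrsim|\lambda_n|^{(2\theta+\beta-2)/(2-\beta)}$ for $\beta<1$ (estimates \eqref{LackCase03} and \eqref{Nu010}), which blow up exactly where your construction is silent. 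In short: in the over-damped regime the resolvent is large only along inputs that excite the plate equation, and your $F_n$ is orthogonal to those; the missing idea is a change of forcing component, not a sharper localization of $\lambda_n$. One further caution if you rework this: on the borderline $2\theta+\beta=2$, $\theta>1/2$, neither your forcing nor the paper's Case 4 mechanism diverges, and the paper's Case 1 rate $|\lambda_n|^{5-4\beta-2\theta}$ there rests on the estimate \eqref{OrdemP1Case4}, $|p_{1,n}|\approx\sigma_n^{1+\beta}$, in which the two displayed leading terms actually cancel (at the small root of $q_n$ one has $p_{1,n}\approx-\alpha\sigma_n^2$); so that line would need a separate, genuinely different argument in any complete proof.
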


\begin{proof}[\bf Proof.]
 Let us construct a sequence
$F_n$ such that the solutions of 
\begin{equation*}
i\lambda_n U_n-\mathbb{B}U_n=F_n,
\end{equation*}
satisfy
$|\lambda_n|\|U_n\|_\mathcal{H}\to\infty$  in
$R$,  which in turn implies
\begin{equation}\label{LackAnaliticity}
\|\lambda_n(i\lambda_nI-\mathbb{B})^{-1}
F_n\|_\mathcal{H}\to\infty \ 
{\rm for  }\  (\theta,\beta) 
\in\left[  0, 1\right] 
\times (0,1]-\{(\tfrac{1}{2},1)\}.
\end{equation}
which means that the corresponding 
semigroup is not analytic.

From here, in order to prove 
\eqref{LackAnaliticity}, we divide 
our analysis into four cases.  
For all of them case we will 
choose the function
$F_n=(0,0,-e_n,0)\in\mathcal{H}$.
 
As the   operator
$A$ defined in \eqref{Omenoslaplaciano}  
is positive, self-adjoint and it has 
compact resolvent, its spectrum  is 
constituted by positive eigenvalues
$(\sigma_n)$ such that
$\sigma_n\con \infty$ as
$n\con \infty$. For
$n\in \N$,  we denote with
$e_n$   an unitary
$D(A^\frac{\beta}{2})$-norm eigenvector
 associated to the eigenvalue
$\sigma_n$, that is,
\begin{equation}\label{auto-10A}
A^\varphi e_n=\sigma_n^\varphi e_n,\quad 
\|e_n\|_{D(A^\frac{\beta}{2})}=1,
\quad n\in\N, \; \varphi\in\mathbb{R}.
\end{equation}

Let $F_n=(0,0,-e_n,0)\in \mathcal{H}$.
The solution 
$U_n=(u_n,v_n,w_n,z_n)$ of the system 
$(i\lambda_n I-\mathbb{B})U_n=F_n$ satisfies 
$w_n=i\lambda_n u_n$, $z_n=i\lambda_n v_n$ 
and the following equations
\begin{align*}
\lambda^2_n(I+\kappa A^\beta) u_n-\alpha 
A^2u_n-i\lambda_n\gamma Av_n&=  e_n+\kappa 
A^\beta e_n,\\
\lambda^2_n  v-\alpha Av_n+i\lambda_n\gamma Au_n
    -i\lambda_n\delta A^{\theta} v_n&= 0.
\end{align*}
Let us see whether this system admits 
solutions of the form
$
u_n=\mu_n e_n,
$
$ v_n=\nu_n e_n,
$
for some complex numbers 
$\mu_n$ and 
$\nu_n$. Then, the numbers 
$\mu_n$, 
$\nu_n$ should satisfy the algebraic system
\begin{align}
\label{eq001systemA}
\big\{\lambda^2_n+\kappa\lambda^2_n\sigma_n^\beta
- \alpha\sigma_n^2\big\}\mu_n-
i\lambda_n\gamma\sigma_n\nu_n 
&=  1+\kappa\sigma_n^\beta,\\
\label{eq002systemA}
i\lambda_n\gamma\sigma_n\mu_n+\big\{\lambda^2_n
-\alpha\sigma_n-i\delta\lambda_n
\sigma_n^\theta \big\}\nu_n&=  0.
\end{align}
On the other hand,  
solving the system 
\eqref{eq001systemA}-\eqref{eq002systemA}, 
we find out that
\begin{eqnarray}\label{Muexpre01A}
\mu_n=\frac{(1+\kappa\sigma_n^\beta) 
 \big\{p_{2,n}(\lambda^2_n)
    -i\delta\lambda_n\sigma_n^\theta\big\}}
{p_{1,n}(\lambda^2_n)p_{2,n}(\lambda^2_n)
-\gamma^2\lambda^2_n\sigma_n^2
    -i\delta\lambda_n\sigma_n^\theta 
     p_{1,n}(\lambda^2_n)}
\end{eqnarray}
and
\begin{eqnarray}\label{Nu001}
\nu_n=\dfrac{-i\gamma\lambda_n
\sigma_n(1+\kappa\sigma_n^\beta)}
{p_{1,n}(\lambda^2_n)p_{2,n}(\lambda^2_n)
-\gamma^2\lambda^2_n\sigma_n^2
    -i\delta\lambda_n\sigma_n^\theta 
     p_{1,n}(\lambda^2_n)},
\end{eqnarray}
where
\begin{eqnarray}\label{P1P2A}
p_{1,n}(\lambda^2_n):=
\lambda^2_n+\kappa\lambda^2_n\sigma_n^\beta
-\alpha\sigma_n^2\quad\text{and}\quad 
p_{2,n}(\lambda^2_n)=\lambda^2_n-\alpha\sigma_n.
\end{eqnarray}
Taking 
$s_n=\lambda_n^2$ and considering  
the polynomial in $s_n$
\begin{align*}
q_n(s_n)   :   & =   p_{1,n}(s_n)p_{2,n}(s_n)
-\gamma^2\sigma_n^2s_n\\
& =  \underbrace{(1+\kappa\sigma_n^\beta)}_as_n^2
\underbrace{-[\alpha(\sigma_n
+\kappa\sigma_n^{1+\beta})+
(\alpha+\gamma^2)\sigma_n^2]}_bs_n
+\underbrace{\alpha^2\sigma_n^3}_c.
\end{align*}
We will now divide our analysis into four cases:

 \smallskip
 
 \noindent
{\bf First case:}     
\begin{center}
    \begin{tikzpicture}[scale=3, rotate=0]
    \label{Figura02}
    \draw[thick,->] (0,0) -- (1.25,0);
    \draw[thick,->] (0,0) -- (0,1.25);
    \foreach \x in {0,1}
    \draw (\x cm,1pt) -- (\x cm,-1pt) node[anchor=north] {$\x$};
    \foreach \y in {0,1}
   \draw[thick,->] (0,0) -- (1.25,0)
    node[anchor=north west] {$\theta$ axis};
    \draw[thick,->] (0,0) -- (0,1.25) node[anchor=south east] {$\beta$ axis};
     \draw (1.07,0.54) node[anchor=center] {$\qquad(1, \frac{1}{2})$};
       \draw (0.4, 1.12) node[anchor=center] {$\qquad( \frac{1}{2},1)$};
          \draw (-0.1, 1) node[anchor=center] {$1$};
 \coordinate[label=left:] (a) at (0,1);
    \coordinate[label=right:] (b) at (1/2,1);
    \coordinate[label=right:] (c) at (1,1/2);
    \coordinate[label=left:] (d) at (1,0);
    \coordinate[label=left:] (e) at (0,0);
  \draw[rectangle, fill=black!30!white] (a)  -- (b) --  (c) -- (d) -- (e) -- (a) -- cycle;    

  \draw[black!95!black, ultra thick] (1,0) -- (1, 1/2);
   \draw[black!95!black, ultra thick] (0,0) -- (0,1);
   \draw[black!95!black, ultra thick, dashed] (0,1) -- (1/2, 1);
 \draw[black!5!white, ultra thick, dashed] (0,0) -- (1, 0);
  \draw[black!5!black, ultra thick] (1/2,1) -- (1, 1/2);
  \draw[white!5!white, ultra thick] (1, 1) -- (1,1/2);   
    \draw[white!05!white, ultra thick] (1/2, 1) -- (1, 1);
  \draw[fill=white!100!white, dashed] (0,1) circle (0.025);
    \draw[fill=white!100!white,dashed] (1/2,1) circle (0.025);
   \draw[fill=black!100!white, dashed] (1,1/2) circle (0.025);
    \draw[fill=white!100!white dashed] (1,0) circle (0.025);
   \draw[fill=white!100!white, dashed] (0,0) circle (0.025);
    \end{tikzpicture}
    
    {\bf Fig.  02}: Region 
    $R_1:=\{(\theta,\beta)\in [0,1]\times(0,1)/ \frac{3}{2}
    \geq \theta+\beta\} $.
\end{center}

Considering initially the region 
$$
R_{11}=\{ (\theta,\beta)\in \R^2 / 
0\leq \theta\leq 1 \text{  and }
\  0<\beta<1  \}
$$  
and choosing  the negative root of 
$q_n(\lambda_n^2)=0$,  we have
\begin{align}
\nonumber
|\lambda_n|^2 
 &=
 \Big |\dfrac{(-b-\sqrt{b^2-4ac})(-b+\sqrt{b^2-4ac})}
{2a(-b+\sqrt{b^2-4ac})} \Big |\\
\label{OrdemLambdaCase4}
&= \Big |\dfrac{2c}
{ (-b+\sqrt{b^2-4ac})} \Big |\approx  |\sigma_n|.
\end{align}

From \eqref{P1P2A},  
we have
$$
p_{1,n}(\lambda_n^2)-\kappa\lambda_n^2
\sigma_n^\beta+\alpha\sigma_n^2=p_{2,n}
(\lambda_n^2)+\alpha\sigma_n
$$
then
\begin{align*}
p^2_{1,n}(\lambda_n^2)-[\kappa \lambda_n^2\sigma_n^\beta 
+\alpha\sigma_n-\alpha\sigma_n^2]p_{1,n}(\lambda_n^2)
-\gamma^2\lambda_n^2\sigma_n^2=0.
\end{align*}
Therefore,
\begin{equation*}
p_{1,n}(\lambda_n^2)=
\dfrac{ [\kappa \lambda_n^2\sigma_n^\beta +
\alpha\sigma_n-\alpha\sigma_n^2] }{2}
\pm\dfrac{\sqrt{[\kappa \lambda_n^2\sigma_n^\beta +
\alpha\sigma_n-\alpha\sigma_n^2]^2+
4\gamma^2\lambda_n^2\sigma_n^2}}{2}.
\end{equation*}
Choosing positive root and from estimate 
\eqref{OrdemLambdaCase4},  we have

\begin{multline}
\label{OrdemP1Case4}
|p_{1,n}(\lambda_n^2)|
= \Bigg | \dfrac{\sqrt{[\kappa \lambda_n^2
\sigma_n^\beta +\alpha\sigma_n-
\alpha\sigma_n^2]^2+4\gamma^2\lambda_n^2
\sigma_n^2}-\alpha\sigma_n^2}{2}\\+
\dfrac{\kappa\lambda_n^2\sigma_n^\beta+
\alpha\sigma_n}{2} \Bigg |\\
= \Big | \dfrac{\kappa^2\lambda_n^4 \sigma_n^{2\beta}+
\alpha^2\sigma_n^2+2\alpha\kappa\lambda_n^2
\sigma_n^{1+\beta}-2\kappa\alpha\lambda_n^2
\sigma_n^{2+\beta}-2\alpha^2\sigma_n^3+
4\gamma^2\lambda_n^2\sigma_n^2}
{2\sqrt{[\kappa \lambda_n^2\sigma_n^\beta +
\alpha\sigma_n-\alpha\sigma_n^2]^2+
4\gamma^2\lambda_n^2\sigma_n^2}+2\alpha\sigma_n^2}\\
+\dfrac{\kappa\lambda_n^2\sigma_n^\beta+
\alpha\sigma_n}{2} \Big |\approx 
|\sigma_n|^{1+\beta}\approx |\lambda_n|^{2+2\beta}.
\end{multline}
On the other hand, from 
\eqref{OrdemLambdaCase4} and 
\eqref{OrdemP1Case4}, we obtain
$$
|p_{2,n}(\lambda_n^2)|= 
\Big |\dfrac{\gamma^2\lambda_n^2
\sigma_n^2}{p_{1,n}(\lambda_n^2)} 
\Big |\approx |\lambda_n|^{4-2\beta}.
$$
Hence,
  from \eqref{Muexpre01A},  
we have 
\begin{align*}
|\mu_n| & = 
\Big | \dfrac{ (1+\kappa\sigma_n)
\{  p_{2,n}(\lambda_n^2)-
i\delta\lambda_n\sigma_n^\theta\} }
{-i\delta\lambda_n\sigma_n^\theta p_{1,n}
(\lambda^2)} \Big |\\ 
& \approx 
 \left\{ \begin{array}{ccc}
|\lambda_n |^{3-4\beta-2\theta} 
& {\rm for }& 3\geq 2\beta+2\theta\\
|\lambda_n|^{-2\beta} 
& {\rm for } & 3<2\beta+2\theta
\end{array}  \right.
\end{align*}
Finally
\begin{align}
\nonumber
\|U_n\|_\mathcal{H}\geq 
K\|w_n\|_{D(A^{\frac{\beta}{2}})} &  = 
K|\lambda_n||\mu_n|
\underbrace{\|e_n\|_{D(A^\frac{\beta}{2})}}_1\\
\label{Nu007}
& \approx  
\left\{ \begin{array}{ccc}
|\lambda_n |^{4-4\beta-2\theta}
 & {\rm for }& 3\geq 2\beta+2\theta\\
|\lambda_n|^{1-2\beta} 
& {\rm for } & 3<2\beta+2\theta.
\end{array}  \right.
\end{align}
Then  
\begin{equation}\label{Nu008}
|\lambda_n|\|U_n\|_\mathcal{H}\geq 
K \left\{ \begin{array}{ccc}
|\lambda_n |^{5-4\beta-2\theta} 
& {\rm for }& 3\geq 2\beta+2\theta\\
|\lambda_n|^{2-2\beta} 
& {\rm for } & 3<2\beta+2\theta.
\end{array}  \right.
\end{equation} 
As $5-4\beta-2\theta>0\Longleftrightarrow  
\dfrac{5}{2}>2\beta+\theta$  for 
$\dfrac{3}{2}\geq \theta+\beta$ and 
$(\theta,\beta)\in R_{11}$,   
then
\begin{equation}
|\lambda_n|\|U_n\|_\mathcal{H}\to \infty\qquad 
{\rm  in}\qquad R_1.
\end{equation}
Thereupon   
$S(t)$ is not analytic on $R_1$.

 \smallskip
 
 \noindent
{\bf Second Case:} 
  Lack of analyticity  for $\beta=1$,   it follows from \eqref{auto-10A}  that $\|e_n\|_{D(A^\frac{1}{2})}=1$  and $0\leq \theta< \frac{1}{2}$,   using  $\nu_n$.  \\
 
\begin{center}
    \begin{tikzpicture}[scale=2.9, rotate=0]
    \label{Figura03}
    \draw[thick,->] (0,0) -- (1.25,0);
    \draw[thick,->] (0,0) -- (0,1.25);
    \foreach \x in {0,1}
    \draw (\x cm,1pt) -- (\x cm,-1pt) node[anchor=north] {$\x$};
  \draw[thick,->] (0,0) -- (1.25,0)
    node[anchor=north west] {$\theta$ axis};
    \draw[thick,->] (0,0) -- (0,1.25) node[anchor=south east] {$\beta$ axis};
     \draw (0.5,1.1) node[anchor=center] {$(\frac{1}{2}, 1)$};
        \draw (-0.1, 1) node[anchor=center] {$1$};
  \coordinate[label=left:] (a) at (1,0);
    \coordinate[label=right:] (b) at (1,0);
    \coordinate[label=right:] (c) at (1/2,1);
    \coordinate[label=left:] (d) at (1,1);

     \draw[black!5!black, ultra thick] (0,1) -- (1/2, 1);
    \draw[white!5!white, ultra thick] (0.5, 1) -- (1,1);
    \draw[black!25!black, thick] (0, 0) -- (1.2, 0);
    \draw[white!5!white, ultra thick] (1, 0) -- (1, 1);
    \draw[fill=black!100!black, dashed] (0,1) circle (0.025);
   \draw[fill=white!100!white, dashed] (1/2,1) circle (0.025);
    \end{tikzpicture}
    
    {\bf Fig.  03}: Region  $R_2:= \{ (\theta,1)\in\R^2/ 
    \theta\in [0,1/2) \}$.
\end{center}
 
   From  algebraic  system  \eqref{eq001systemA}, \eqref{eq002systemA} and \eqref{P1P2A}, we have
\begin{equation}\label{Nuexpre01A}
\nu_n=\frac{\Delta_{\nu_n}}
{p_{1,n}(\lambda^2_ n)p_{2,n}(\lambda^2_n)-\gamma^2\lambda^2_n\sigma_n^2
    -i\delta\lambda_n\sigma_n^\theta  p_{1,n}(\lambda^2_n)},
\end{equation}
where
$$\Delta_{\nu_n}=\left | \begin{array}{cc}
p_{1,n}(\lambda_n^2)&1+\kappa\sigma_n \\
i\gamma\lambda_n\sigma_n & 0
\end{array} \right | =- i\gamma\lambda_n(\sigma_n+\kappa\sigma_n^2).$$
Consequently,  taking again $q_n(s_n)=0$   and    choosing  $s_n^+=\lambda_n^2$, 
we have 
\begin{align}\label{Nuexpre01AB}
|\nu_n|= \Big | \frac{- i\gamma\lambda_n(\sigma_n+\kappa\sigma_n^2)}
{ -i\delta\lambda_n\sigma_n^\theta  p_{1,n}(\lambda^2_n)} \Big |= \Big | \frac{\gamma(\sigma_n+\kappa\sigma_n^2)}
{ \delta\sigma_n^\theta  p_{1,n}(\lambda^2_n)} \Big |\qquad {\rm and}
\end{align}

\begin{equation}
\label{Sn+-AB}
|\lambda_n|^2=|s_n^{+}|=  \Big |\dfrac{-b+\sqrt{b^2-4ac}}{2a} \Big |,\qquad{\rm where}
\end{equation}

\begin{eqnarray}\label{E00lambda-}
  |a|&=&|(1+\kappa\sigma_n)|\approx |\sigma_n|\\
  \label{E01lambda-}
|2c|& =& |2\alpha^2\sigma_n^3|\approx|\sigma_n|^3\\
\label{E02lambda-}
|-b|& = &|\alpha\sigma_n
+(\kappa\alpha+
\alpha+\gamma^2)\sigma_n^2|\approx |\sigma_n|^2\\
\nonumber
|\sqrt{b^2-4ac}|& =& |\{ {\alpha^2\sigma_n^2+2\alpha[\kappa\alpha+\gamma^2-\alpha]\sigma_n^3} \\
\notag
&  &+[(\kappa\alpha-\alpha)^2+\gamma^4+2\gamma^2(\kappa\alpha+\alpha)]\sigma_n^4\}^\frac{1}{2} |\\
\label{E03lambda-}
& & \approx  |\sigma_n|^2.
\end{eqnarray} 
Consequently,  using equations \eqref{E00lambda-}, \eqref{E02lambda-} and \eqref{E03lambda-}
in \eqref{Sn+-AB},  we obtain
\begin{equation}\label{E04lambda-}
|\lambda_n|^2\approx |\sigma_n|.
\end{equation}
And since 
\begin{eqnarray}\nonumber
|p_{1,n}(\lambda_n^2)| &= & \Big | \dfrac{[\alpha\sigma_n+(\kappa\alpha+\alpha+\gamma^2)\sigma_n^2]+\sqrt{b^2-4ac}}{2(1+\kappa\sigma_n)}(1+\kappa\sigma_n)-\alpha\sigma_n^2 \Big |
\\
\label{Case2LackAnaliticity}
&=&
 \Big | \dfrac{\sqrt{b^2-4ac}-\alpha(1-\kappa)\sigma_n^2}{2}+ \dfrac{\alpha\sigma_n+\gamma^2\sigma_n^2}{2}
 \Big |.
\end{eqnarray}
The conclusion could be direct without mentioning the   \eqref{Case2LackAnaliticity}
\begin{equation}\label{Case2ALackAnaliticity}
|p_{1,n}(\lambda_n^2)|\approx|\sigma_n|^2\quad{\rm for}\quad \kappa\geq 1.
\end{equation}
Let's analyze for $\kappa<1$: 
\begin{eqnarray}
\nonumber
|p_{1,n}(\lambda_n^2)| &=&  \Big | \dfrac{b^2-4ac-\alpha^2(1-\kappa)^2\sigma_n^4}{2[\sqrt{b^2-4ac}+\alpha(1-\kappa)\sigma_n^2]}+    \dfrac{\alpha\sigma_n+\gamma^2\sigma_n^2}{2}    \Big |
\\
\nonumber
&=&  \Big |\bigg\{ \dfrac{\alpha^2\sigma_n^2+[2\alpha(\kappa\alpha+\gamma^2-\alpha)]\sigma_n^3} {2[\sqrt{b^2-4ac}+\alpha(1-\kappa)\sigma_n^2]} \\
\nonumber
& & + \dfrac{[\alpha^2(\kappa-1)^2+\gamma^4+2\alpha\gamma^2(1+\kappa)]\sigma_n^4-\alpha^2(1-\kappa)^2\sigma_n^4}
 {2[\sqrt{b^2-4ac}+\alpha(1-\kappa)\sigma_n^2]}\\
 \nonumber
 & & + \dfrac{\alpha\sigma_n+\gamma^2\sigma_n^2}{2}    \Big |
\\
\nonumber
&= &  \Big |\bigg\{ \dfrac{\alpha^2\sigma_n^2+[2\alpha(\kappa\alpha+\gamma^2-\alpha)]\sigma_n^3} {2[\sqrt{b^2-4ac}+\alpha(1-\kappa)\sigma_n^2]} \\
\notag
& & + \dfrac{[\gamma^4+2\alpha\gamma^2(1+\kappa)]\sigma_n^4}
 {2[\sqrt{b^2-4ac}+\alpha(1-\kappa)\sigma_n^2]} +    \dfrac{\alpha\sigma_n+\gamma^2\sigma_n^2}{2}    \Big |
\\
\label{Case2BLackAnaliticity}
& & \approx  |\sigma_n|^2\quad{\rm for}\quad \kappa<1.
\end{eqnarray} 
Then,   for  $\beta=1$ using  \eqref{Case2ALackAnaliticity}  and  \eqref{Case2BLackAnaliticity} in \eqref{Nuexpre01AB},  we obtain
\begin{equation}
\label{Nuexpre02B}
|\nu_n|\approx |\sigma_n|^{-\theta}\approx |\lambda_n|^{-2\theta}\quad{\rm for}\quad \beta=1.
\end{equation}
Therefore
\begin{equation}\label{Nuexpre03B}
\|U_n\|_\mathcal{H}\geq K\|v_n\|_{D(A^\frac{1}{2})}=K|\nu_n|\underbrace{\|e_n\|_{D(A^\frac{1}{2})}}_1=K|\lambda_n|^{-2\theta}\quad{\rm for}\quad \beta=1.
\end{equation}
 Finally,  
\begin{equation}\label{LackCase02}
|\lambda_n|\|U_n\|_\mathcal{H}\geq K|\lambda_n|^{1-2\theta} \quad {\rm for }\quad \beta=1.
\end{equation}

As  $1-2\theta>0$ for  $0\leq\theta<\frac{1}{2}$,  then  $\lim\limits_{|\lambda|\to\infty}|\lambda_n|\|U_n\|_\mathcal{H}\to \infty$. \\
Then $S(t)$  is non-analytic  in $R_2$.

 \smallskip
 
 \noindent
{\bf  Third case:} 
 
\begin{center}
    \begin{tikzpicture}[scale=3, rotate=0]
    \label{Figura04}
    \draw[thick,->] (0,0) -- (1.25,0);
    \draw[thick,->] (0,0) -- (0,1.25);
    \foreach \x in {0,1}
    \draw (\x cm,1pt) -- (\x cm,-1pt) node[anchor=north] {$\x$};
    \foreach \y in {0,1}
 \draw[thick,->] (0,0) -- (1.25,0)
    node[anchor=north west] {$\theta$ axis};
    \draw[thick,->] (0,0) -- (0,1.25) node[anchor=south east] {$\beta$ axis};
       \draw (-0.1, 1) node[anchor=center] {$1$};
     \draw(0.5,1.1) node[anchor=center] {$(\frac{1}{2}, 1)$};
 \coordinate[label=left:] (a) at (1,0);
    \coordinate[label=right:] (b) at (1,0);
    \coordinate[label=right:] (c) at (1/2,1);
    \coordinate[label=left:] (d) at (1,1);
 \draw[black!5!black, thick, dashed] (0,1) -- (1/2, 1);
    \draw[black!95!black, ultra thick ] (1/2,1) -- (1, 1);
     \draw[black!25!black, thick, dashed] (1,0) -- (1, 1);
  \draw[black!25!black, thick] (0, 0) -- (1.2, 0);
  \draw[fill=black!100!black, dashed] (1,1) circle (0.025);
   \draw[fill=white!100!white, dashed] (1/2,1) circle (0.025);
   \end{tikzpicture}
    
    {\bf Fig.  04}: Region $R_3:=\{ (\theta,1)\in\R^2/ 
    \theta\in (\frac{1}{2},1] \}$.
\end{center}

 For  $\beta=1$  and $\frac{1}{2}<\theta\leq 1$,   let's assuming  that $p_{1,n}(s_n)=0$ and determine the order of $|\mu_n|$:
$$
p_{1,n}(s_n)=0\Longleftrightarrow \lambda_n^2=\dfrac{\alpha\sigma_n^2}{1+\kappa\sigma_n}\Longrightarrow  |\lambda_n|^2\approx|\sigma_n|.$$
As  
\begin{equation*}
|p_{2,n}(s_n)| =|\lambda_n^2-\alpha\sigma_n| = \Big | \dfrac{\alpha(1-\kappa)\sigma_n^2-\alpha\sigma_n}{1+\kappa\sigma_n} \Big |
\approx \left\{\begin{array}{ccc}
|\sigma_n|^0&{\rm if} & \kappa=1\\
|\sigma_n|^1  &{\rm if} & \kappa\not= 1. 
\end{array}\right.
\end{equation*}

From $\frac{1}{2}<\theta\leq 1$, we have $4<3+2\theta$,   then
\begin{equation}\label{Muexpre01C}
|\mu_n|= \Big | \frac{(1+\kappa\sigma_n)  \big\{p_{2,n}(\lambda^2_n)
    -i\delta\lambda_n\sigma_n^\theta\big\}}
{-\gamma^2\lambda^2_n\sigma_n^2} \Big |\approx|\lambda_n|^{2\theta-3}.
\end{equation}
Finally, 
\begin{equation}\label{Nuexpre03C}
\|U_n\|_\mathcal{H}\geq K\|w_n\|_{D(A^\frac{1}{2})}=K|\lambda_n||\mu_n|\underbrace{\|e_n\|_{D(A^\frac{1}{2})}}_1=K|\lambda_n|^{2\theta-2}.
\end{equation}
Then 
\begin{equation}\label{LackCase03}
|\lambda_n|\|U_n\|_\mathcal{H}\geq K|\lambda_n|^{2\theta-1} \quad {\rm for }\quad \beta=1.
\end{equation}
As  $2\theta-1>0 \Longleftrightarrow \frac{1}{2}<\theta\leq 1$. Then $|\lambda_n|\|U_n\|_\mathcal{H}\to \infty$ for $ \frac{1}{2}<\theta\leq 1$ and $\beta=1$.  
Therefore,  $S(t)$ is not analytical on $R_3$.

 \smallskip
 
 \noindent
{\bf Fourth Case:}
Considering initially the region 
$R_{11}=\{ (\theta,\beta)\in \R^2\; /\; 0\leq 
\theta\leq 1\quad{\rm and}\quad 0<\beta<1  \}$ 
  
\begin{center}
    \begin{tikzpicture}[scale=3, rotate=0]
    \label{Figura05}
    \draw[thick,->] (0,0) -- (1.25,0);
    \draw[thick,->] (0,0) -- (0,1.25);
    \foreach \x in {0,1}
    \draw (\x cm,1pt) -- (\x cm,-1pt) node[anchor=north] {$\x$};
    \foreach \y in {0,1}
  \draw[thick,->] (0,0) -- (1.25,0)
    node[anchor=north west] {$\theta$ axis};
    \draw[thick,->] (0,0) -- (0,1.25) node[anchor=south east] {$\beta$ axis};
           \draw (-0.1, 1) node[anchor=center] {$1$};
     \draw(0.5,1.11) node[anchor=center] {$(\frac{1}{2}, 1)$};
 \draw[black!95!black, ultra thick] (0,0) -- (1, 0);
    \coordinate[label=left:] (a) at (1/2,1);
    \coordinate[label=right:] (b) at (1,1);
    \coordinate[label=right:] (c) at (1,0);
  \draw[rectangle, fill=black!20!white] (a) -- (b) -- (c) -- (a) -- cycle;
\draw[black!5!white, ultra thick] (0,0) -- (1, 0);
  \draw[black!5!black, ultra thick] (1,0) -- (1, 1);
  \draw[white!5!white, ultra thick, dashed] (1/2,1) -- (1, 1);
   \draw[white!5!white, ultra thick, dashed] (1/2,1) -- (1, 0);
 \draw[black!25!black, thick] (0,0) -- (1, 0);
   \draw[black!25!black, thick] (0, 0) -- (0, 1);
    \draw[white!05!white, ultra thick] (0, 1) -- (1/2, 1);
     \draw[black!5!black, thick, dashed] (0,1) -- (1/2, 1);
  \draw[fill=white!100!white, dashed] (1/2,1) circle (0.025);
   \draw[fill=white!100!white, dashed] (1,1) circle (0.025);
    \draw[fill=white!100!white dashed] (1,0) circle (0.025);
    \end{tikzpicture}

    {\bf Fig.  05}: Region $R_4:=\{(\theta,\beta)\in (\frac{1}{2},1]\times(0,1)/ 2<2\theta+\beta\}$.
\end{center}

For $(\theta,\beta)\in R_4$.  
Let us assume that
$p_{1,n}(\lambda_n)=0$, then
\begin{multline}\label{Ordem5to001}
|\lambda_n|^2  = \Big |\dfrac{\alpha\sigma_n^2}{1+\kappa\sigma_n^\beta} \Big |\Longrightarrow |\lambda_n|^2\approx |\sigma_n|^{2-\beta}\\
{\rm and}\quad |p_{2,n}(\lambda_n^2)|=|\lambda_n^2-\alpha\sigma_n|\approx|\sigma_n|^{2-\beta}\approx|\lambda_n|^2.
\end{multline}
Using \eqref{Ordem5to001} in \eqref{Muexpre01A}, we have
\begin{multline}
\label{Ordem5to002}
|\mu_n|= \Big |\dfrac{(1+\kappa\sigma_n^\beta)\{p_{2,n}(\lambda_n^2)-i\delta\lambda_n\sigma_n^\theta\}}{-\gamma^2\lambda_n^2\sigma_n^2} \Big |\\
\approx\left\{\begin{array}{ccc}
|\lambda_n|^\frac{2\beta-4}{2-\beta}  &{\rm for} & 2>\beta+2\theta \\
|\lambda_n|^\frac{3\beta+2\theta-6}{2-\beta}  & {\rm for} & 2\leq\beta+2\theta
\end{array}\right.
\end{multline}
Finally, 
\begin{multline}
\label{Nu009}
\|U_n\|_\mathcal{H}\geq 
K\|w_n\|_{D(A^{\frac{\beta}{2}})}  = 
K|\lambda_n||\mu_n|
\underbrace{\|e_n\|_{D(A^\frac{\beta}{2})}}_1
\\
\approx   
\left\{ \begin{array}{ccc}
|\lambda_n |^{-1} 
& {\rm for }& 2\geq \beta+2\theta\\
|\lambda_n|^\frac{2\beta+2\theta-4}{2-\beta} 
& {\rm for } & 2<\beta+2\theta
\end{array}  \right.
\end{multline}
Then 
\begin{equation}\label{Nu010}
|\lambda_n|\|U_n\|_\mathcal{H}\geq K 
\left\{ \begin{array}{ccc}
|\lambda_n |^0 
& {\rm for }& 2\geq  \beta+2\theta\\
|\lambda_n|^\frac{\beta+2\theta-2}{2-\beta} 
& {\rm for } & 2<\beta+2\theta
\end{array}  \right.
\end{equation}
As $\beta+2\theta-2>0\Longleftrightarrow 
2<\beta+2\theta$,  
then 
$|\lambda_n|\|U_n\|_\mathcal{H}\to\infty$ 
for 
$2<\beta+2\theta$ in $R_{11}$. 
Therefore,   
$S(t)$ is not analytic on $R_4$.
\end{proof}

\subsection{Gevrey Class of $S(t)=e^{\mathbb{B}t}$ in $R_G$}
\label{3.2}
In this subsection we determine the Gevrey  class of the semigroup $S(t)=e^{\mathbb{B}t}$ in the region $R_G:=\{(\theta,\beta)\in (0, 1] \times (0,1)\}$, and to  this purpose,  we need to recall the definition and result presented in \cite{SCRT1990,Tebou-2020} (adapted from \cite{TaylorM}, Theorem 4, p. 153).

\begin{definition}\label{Def1.1Tebou} Let $t_0\geq 0$ be a real number.  A strongly continuous semigroup $S(t)$, defined on a Banach space $ \mathcal{H}$, is of Gevrey class $s > 1$ for $t > t_0$, if $S(t)$ is infinitely differentiable for $t > t_0$, and for every compact set $K \subset (t_0,\infty)$ and for each $\mu > 0$,  there exists a constant $ C = C(\mu, K) > 0$ such that
    \begin{equation}\label{DesigDef1.1}
    ||S^{(n)}(t)||_{\mathcal{L}( \mathcal{H})} \leq  C\mu ^n(n!)^s,  \text{ for all } \quad t \in K, n = 0,1,2...
    \end{equation}
\end{definition}
 
\begin{theorem}[\cite{TaylorM}]\label{Theorem1.2Tebon}
    Let $S(t)$  be a strongly continuous and bounded semigroup on a Hilbert space $ \mathcal{H}$.  Suppose that the infinitesimal generator $\mathbb{B}$ of the semigroup $S(t)$ satisfies the following estimate, for some $0 < \phi < 1$:
    \begin{equation}\label{Eq1.5Tebon2020}
    \lim\limits_{|\lambda|\to\infty} \sup |\lambda |^\phi \|(i\lambda I-\mathbb{B})^{-1}\|_{\mathcal{L}( \mathcal{H})} < \infty.
    \end{equation}
    Then $S(t)$  is of Gevrey  class  $s$   for $t>0$,  for every   $s >\dfrac{1}{\phi}$.
\end{theorem}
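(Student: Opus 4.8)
The statement is the standard sufficient condition for Gevrey regularity through a subpolynomial resolvent bound, so the plan is to convert the imaginary-axis estimate \eqref{Eq1.5Tebon2020} into a resolvent bound on a parabola-like region lying to the left of $i\R$, and then to control the Dunford--Taylor representation of the derivatives $S^{(n)}(t)$ along the boundary of that region.

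First I would fix $C>0$ and $\tau_{0}>0$ so that $\|(i\tau I-\mathbb{B})^{-1}\|_{\mathcal{L}(\mathcal{H})}\le C|\tau|^{-\phi}$ for all real $|\tau|\ge\tau_{0}$, which is precisely the content of \eqref{Eq1.5Tebon2020}. Writing $\lambda=\sigma+i\tau$ and factoring $\lambda I-\mathbb{B}=(i\tau I-\mathbb{B})\bigl(I+\sigma(i\tau I-\mathbb{B})^{-1}\bigr)$, a Neumann-series argument shows that whenever $|\sigma|\le\tfrac{1}{2C}|\tau|^{\phi}$ one has $\lambda\in\rho(\mathbb{B})$ together with the bound $\|(\lambda I-\mathbb{B})^{-1}\|_{\mathcal{L}(\mathcal{H})}\le 2C|\tau|^{-\phi}$. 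Since $S(t)$ is a bounded semigroup, the open right half-plane already belongs to $\rho(\mathbb{B})$ with a uniform resolvent bound, so these two pieces together place a full region $\Omega_{\phi}:=\{\sigma+i\tau:\ \sigma\ge -c|\tau|^{\phi},\ |\tau|\ge\tau_{0}\}$, for a suitable $c>0$, inside the resolvent set, with $\|(\lambda I-\mathbb{B})^{-1}\|_{\mathcal{L}(\mathcal{H})}\le C|\tau|^{-\phi}$ there.

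Next I would represent the derivatives by the contour integral
\begin{equation*}
S^{(n)}(t)=\frac{1}{2\pi i}\int_{\Gamma}\lambda^{n}e^{\lambda t}(\lambda I-\mathbb{B})^{-1}\,d\lambda,\qquad t>0,
\end{equation*}
where $\Gamma$ is the boundary curve $\sigma=-c|\tau|^{\phi}$ for $|\tau|\ge\tau_{0}$, closed up by a bounded arc near the origin (which stays in $\rho(\mathbb{B})$ since $i[-\tau_{0},\tau_{0}]\subset\rho(\mathbb{B})$), oriented upward. Along $\Gamma$ one has $\mathrm{Re}\,\lambda=-c|\tau|^{\phi}\to-\infty$, so $|e^{\lambda t}|=e^{-ct|\tau|^{\phi}}$ is absolutely integrable against the polynomially growing factor $|\lambda|^{n}|\tau|^{-\phi}$; this both justifies the representation for $t>0$ and yields, after parametrizing by $\tau$ and using $|\lambda|\lesssim|\tau|$ and $|d\lambda|\lesssim d\tau$, the bound
\begin{equation*}
\|S^{(n)}(t)\|_{\mathcal{L}(\mathcal{H})}\le C_{K}R_{0}^{\,n}+C\int_{\tau_{0}}^{\infty}\tau^{\,n-\phi}\,e^{-ct\tau^{\phi}}\,d\tau ,
\end{equation*}
the first term coming from the bounded arc. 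The substitution $u=ct\tau^{\phi}$ turns the integral into $\sim (ct)^{-(n+1-\phi)/\phi}\,\Gamma\!\bigl(\tfrac{n+1-\phi}{\phi}\bigr)$, and Stirling's formula gives $\Gamma\!\bigl(\tfrac{n+1-\phi}{\phi}\bigr)\le C\,M^{n}(n!)^{1/\phi}$ with $M=\phi^{-1/\phi}$ depending only on $\phi$. On any compact $K\subset(0,\infty)$ the factor $(ct)^{-(n+1-\phi)/\phi}$ is itself of the form $C_{K}R_{K}^{\,n}$, so altogether $\|S^{(n)}(t)\|_{\mathcal{L}(\mathcal{H})}\le C_{K}R_{K}^{\,n}(n!)^{1/\phi}$ uniformly on $K$.

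Finally, to match Definition \ref{Def1.1Tebou} for every $s>1/\phi$ and every $\mu>0$, I would write $(n!)^{1/\phi}=(n!)^{s}(n!)^{1/\phi-s}$; since $1/\phi-s<0$ the sequence $R_{K}^{\,n}(n!)^{1/\phi-s}$ tends to $0$ faster than any geometric sequence, hence is bounded by $C(\mu,K)\mu^{n}$, which delivers \eqref{DesigDef1.1}. The main obstacle is the second step: without sectorial analyticity one must argue carefully that the shifted contour genuinely lies in $\rho(\mathbb{B})$ and that the Dunford--Taylor integral along it both converges and actually represents $S^{(n)}(t)$ for $t>0$ — typically by deforming from a vertical line $\mathrm{Re}\,\lambda=a>0$ (where Laplace inversion is classical for the bounded semigroup) to $\Gamma$, checking that the connecting arcs vanish at infinity. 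This is exactly where the subpolynomial decay $|\tau|^{-\phi}$, in place of a full $O(|\tau|^{-1})$ sectorial bound, must be exploited through the parabolic contour.
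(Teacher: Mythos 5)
The paper does not prove Theorem \ref{Theorem1.2Tebon}: it is imported by citation from Taylor's thesis \cite{TaylorM}, as adapted in \cite{SCRT1990} and \cite{Tebou-2020}, so there is no internal proof to compare yours against. Your sketch reconstructs the standard argument behind that citation, and its essentials are correct: the Neumann-series extension of \eqref{Eq1.5Tebon2020} to a region $\{\mathrm{Re}\,\lambda\ge -c|\mathrm{Im}\,\lambda|^{\phi},\ |\mathrm{Im}\,\lambda|\ge\tau_{0}\}$ with $\|(\lambda I-\mathbb{B})^{-1}\|\lesssim|\mathrm{Im}\,\lambda|^{-\phi}$ there, the Dunford--Taylor integral over the parabola-like contour, the substitution producing $\Gamma\bigl(\tfrac{n+1-\phi}{\phi}\bigr)\le CM^{n}(n!)^{1/\phi}$ via Stirling, and the factorization $(n!)^{1/\phi}=(n!)^{s}(n!)^{1/\phi-s}$ that delivers \eqref{DesigDef1.1} for every $\mu>0$. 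One concrete caution on the step you yourself flag as the main obstacle: with only the bound $\lesssim|\tau|^{-\phi}$, the horizontal segments at height $\pm N$ joining the vertical line $\mathrm{Re}\,\lambda=a>0$ to the curve $\Gamma$ have length of order $N^{\phi}$ and integrand of order $N^{-\phi}$, so their contribution is $O(1)$, not $o(1)$; they vanish only after writing $(\lambda I-\mathbb{B})^{-1}x=\lambda^{-1}x+\lambda^{-1}(\lambda I-\mathbb{B})^{-1}\mathbb{B}x$ for $x\in D(\mathbb{B})$, which gains a factor $N^{-1}$ and makes the arcs $O(N^{\phi-1})\to 0$ since $\phi<1$, after which one extends to all of $\mathcal{H}$ by density and the uniform bounds, and obtains the $n\ge 1$ representation by differentiating under the (absolutely convergent) integral over $\Gamma$. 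With that standard repair your outline is a complete proof of the quoted theorem.
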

 
It was necessary to subdivide our study of the Gevrey class of $S(t)$ into two subsubsections. In subsubsection \eqref{3.2.1} we determine the Gevrey class $s_1> \frac{1}{\Phi_{s_1}}$ for the region $ R_{G1}:= \{(\theta,\beta)\in\mathbb{R}^2/ 2\leq \theta+2\beta, \; 0< \theta\leq 1\;{\rm and}\; 0<\beta<1\}$ and in subsubsection \eqref{3.2.2} we determine the Gevrey class $s_2> \frac{1}{\phi_2}=\frac{2(2+\theta-\beta)}{\theta}$ for the region $R_{G2} :=\{(\theta,\beta)\in\mathbb{R}^2/ \frac{3\theta}{4}+\frac{\beta}{2}\leq 1,\; 0<\theta\leq 1\;{\rm and}\;0<\beta<1 \}$. We highlight that the Gevrey Region of the semigroup $S(t)$, is given by: $R_G = R_{G1}\cup R_{G2}$. Since $R_{G1}\cap R_{G2}$ is not empty, in this intersection region the Gevrey class $s_i$  will be $s_i> \frac{1}{\max\{\Phi_{s_1},\phi_2\}}$.
 
\subsubsection{Gevrey class $s_1 > \frac{1}{\Phi_{s_1}}=\frac{1}{2\max\{ \frac{1-\beta}{3-\beta}, \frac{\theta}{2+\theta-\beta}\}}$  for  $2\leq \theta+2\beta$.}
\label{3.2.1}
 
\begin{center}
    \begin{tikzpicture}[scale=3, rotate=0]
    \label{Figura06}
    \draw[thick,->] (0,0) -- (1.25,0);
    \draw[thick,->] (0,0) -- (0,1.25);
    \foreach \x in {0,1}
    \draw (\x cm,1pt) -- (\x cm,-1pt) node[anchor=north] {$\x$};
    \foreach \y in {0,1}
   \draw[thick,->] (0,0) -- (1.25,0)
    node[anchor=north west] {$\theta$ axis};
    \draw[thick,->] (0,0) -- (0,1.25) node[anchor=south east] {$\beta$ axis};
           \draw (-0.1, 1) node[anchor=center] {$1$};
    \coordinate[label=left:] (a) at (0,1);
    \coordinate[label=right:] (b) at (1,1);
    \coordinate[label=left:] (c) at (1,1/2);

    \draw[rectangle, fill=black!30!white] (a)  -- (b) --  (c) -- (a) -- cycle;
   \draw(1.08,0.54) node[anchor=center] {$\qquad(1, \frac{1}{2})$};
  \draw[black!10!white, ultra thick] (0, 1) -- (1, 1/2); 
     \draw[blue!10!white, ultra thick, dashed] (0, 1) -- (1, 1);
    \draw[white!5!white, thick] (1, 0) -- (1, 1/2);
    \draw[black!25!black, ultra thick] (1, 1/2) -- (1, 1);
    \draw[black!25!black, ultra thick] (0, 1) -- (1, 1/2);
     \draw[black!25!black, ultra thick] (0, 1) -- (1, 1/2); \draw[black!5!black, thick, dashed] (1,0) -- (1,1/2);
    \draw[fill=black!95!black, dashed] (1,1/2) circle (0.02);
    \draw[fill=white!95!black, dashed] (1,1) circle (0.02);
  \draw[fill=white!100!white, dashed] (0,1) circle (0.025);
    \end{tikzpicture}

    {\bf Fig.  06}: Gevrey region 
    $R_{G1}:= \{(\theta,\beta)\in\mathbb{R}^2/ 
    2\leq \theta+2\beta, \; 0< \theta\leq 1\;
    {\rm and}\; 0<\beta<1\}$.
\end{center}
 
First, we are going to show a lemma to obtain 
an estimation necessary to control the term 
$|\lambda|\|A^\frac{\beta}{2}w\|^2$.
 
\begin{lemma}\label{Lemma01Gevrey}
Let  $\tau> 0$.  There exists a positive constant $C_\tau$ such that the solutions of system \eqref{ISplacas-20}--\eqref{ISplacas-40}, for $|\lambda| \geq \tau$, satisfy the following inequalities:
\begin{equation}\label{Eq07EstAbeta2G}
\|A^\frac{1}{2}w\|^2   \leq  C_\tau \|F\|_\mathcal{H}\|U\|_\mathcal{H}\qquad{\rm for}\qquad 2\leq \theta+2\beta.
\end{equation}
\end{lemma}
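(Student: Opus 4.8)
The plan is to produce the estimate by testing the fourth resolvent equation \eqref{esp-40} against $w$, since the coupling term $-\gamma A w$ is exactly the one that generates $\gamma\|A^{1/2}w\|^2$. Taking the duality product of \eqref{esp-40} with $w$, rewriting $\dual{Aw}{w}=\|A^{1/2}w\|^2$, $\dual{Av}{w}=\dual{A^{1/2}v}{A^{1/2}w}$, $\dual{A^\theta z}{w}=\dual{A^{\theta/2}z}{A^{\theta/2}w}$ by self-adjointness, and extracting real parts gives
\begin{equation*}
\gamma\|A^{1/2}w\|^2 = \text{Re}\,\dual{i\lambda z}{w} + \alpha\,\text{Re}\,\dual{A^{1/2}v}{A^{1/2}w} + \delta\,\text{Re}\,\dual{A^{\theta/2}z}{A^{\theta/2}w} - \text{Re}\,\dual{f_4}{w}.
\end{equation*}
The last three terms are routine. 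By Cauchy--Schwarz and Young the $v$-term is $\le C_\varepsilon\|A^{1/2}v\|^2+\varepsilon\|A^{1/2}w\|^2$, the forcing term is $\le\|f_4\|\|w\|\le\|F\|_\mathcal{H}\|U\|_\mathcal{H}$, and since $\theta\le 1$ the interpolation inequality $\|A^{\theta/2}w\|^2\le\varepsilon\|A^{1/2}w\|^2+C_\varepsilon\|w\|^2$ together with \eqref{dis-10} reduces the $z$-term to controllable quantities. Invoking the previously established bounds \eqref{Eq05EstAbeta2}, \eqref{Eq06EstAbeta2}, \eqref{Exp1016} and \eqref{dis-10}, each of these is $\le\varepsilon\|A^{1/2}w\|^2+C\|F\|_\mathcal{H}\|U\|_\mathcal{H}$, the $\varepsilon\|A^{1/2}w\|^2$ pieces being reserved for absorption into the left-hand side at the end.

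The heart of the matter, and the only genuine obstacle, is the cross term $\text{Re}\,\dual{i\lambda z}{w}$, which naively carries a factor $\lambda$. The key idea is to remove $i\lambda w$ using the third equation \eqref{esp-30} rather than \eqref{esp-40} (substituting \eqref{esp-40} back in merely reproduces the identity). Writing $\dual{i\lambda z}{w}=-\dual{z}{i\lambda w}$ and inserting $i\lambda w=(I+\kappa A^\beta)^{-1}\big[(I+\kappa A^\beta)f_3-\alpha A^2u-\gamma Az\big]$ from \eqref{esp-30}, then using the self-adjointness of $(I+\kappa A^\beta)^{-1}$ and its commutation with powers of $A$, I obtain
\begin{equation*}
\dual{i\lambda z}{w}= -\dual{z}{f_3} + \alpha\dual{z}{(I+\kappa A^\beta)^{-1}A^2u} + \gamma\dual{z}{(I+\kappa A^\beta)^{-1}Az}.
\end{equation*}
The first term is a harmless forcing term, $\le\|z\|\|f_3\|\le\|F\|_\mathcal{H}\|U\|_\mathcal{H}$. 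For the other two I would use the elementary spectral bound $(I+\kappa A^\beta)^{-1}A^s\le \kappa^{-1}A^{s-\beta}$ on the spectrum of $A$, which converts the $\lambda$-term into purely spatially weighted quantities.

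It is precisely here that the hypothesis $2\le\theta+2\beta$ is decisive. The last term is nonnegative and, by the spectral bound,
\begin{equation*}
\gamma\,\dual{z}{(I+\kappa A^\beta)^{-1}Az} \le C\|A^{\frac{1-\beta}{2}}z\|^2 \le C\|A^{\theta/2}z\|^2,
\end{equation*}
where the embedding $D(A^{\theta/2})\hookrightarrow D(A^{\frac{1-\beta}{2}})$ holds exactly because $2\le\theta+2\beta$ forces $1-\beta\le\theta$; then \eqref{dis-10} bounds it by $C\|F\|_\mathcal{H}\|U\|_\mathcal{H}$. For the middle term, moving $A^{\theta/2}$ onto $z$ and applying the spectral bound and Cauchy--Schwarz gives
\begin{equation*}
\big|\dual{z}{(I+\kappa A^\beta)^{-1}A^2u}\big| \le C\|A^{\theta/2}z\|\,\|A^{2-\frac{\theta}{2}-\beta}u\| \le C\|A^{\theta/2}z\|\,\|Au\|,
\end{equation*}
the last embedding $D(A)\hookrightarrow D(A^{2-\frac{\theta}{2}-\beta})$ again being equivalent to $2\le\theta+2\beta$; combining with \eqref{dis-10} and \eqref{Eq05EstAbeta2} controls it by $C\|F\|_\mathcal{H}\|U\|_\mathcal{H}$. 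Collecting all contributions and choosing $\varepsilon<\gamma$ to absorb the $\varepsilon\|A^{1/2}w\|^2$ terms into the left side yields $\|A^{1/2}w\|^2\le C_\tau\|F\|_\mathcal{H}\|U\|_\mathcal{H}$. To summarize, the main difficulty is taming the $\lambda$-carrying term $\text{Re}\,\dual{i\lambda z}{w}$; eliminating $i\lambda w$ through the $u$-equation \eqref{esp-30} turns it into weighted spatial terms whose control is available only when $2\le\theta+2\beta$, which is exactly the restriction appearing in the statement.
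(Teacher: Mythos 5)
Your proof is correct and is essentially the paper's own argument: the paper tests \eqref{esp-40} against the multiplier $(I+\kappa A^\beta)A^{-\beta}w$ and substitutes \eqref{esp-30} for $i\lambda(I+\kappa A^\beta)w$, which produces precisely your two critical quantities $\dual{A^{\frac{\theta}{2}}z}{A^{2-\frac{\theta}{2}-\beta}u}$ and $\|A^{\frac{1-\beta}{2}}z\|^2$, controlled by the same embeddings $2-\frac{\theta}{2}-\beta\leq 1$ and $\frac{1-\beta}{2}\leq\frac{\theta}{2}$ that your hypothesis $2\leq\theta+2\beta$ supplies. Testing against plain $w$ and then inverting $(I+\kappa A^\beta)$ via the spectral bound $(I+\kappa A^\beta)^{-1}A^{s}\leq\kappa^{-1}A^{s-\beta}$ is only a cosmetic repackaging of that multiplier (and using \eqref{Exp1016} directly for the $v$-term instead of converting it to $z$-terms through \eqref{esp-20} is an equally minor variation), so the two proofs coincide in substance.
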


\begin{proof}[\bf Proof.]
Now, applying the duality product between \eqref{esp-40} and    $(I+\kappa A^\beta) A^{-\beta}w$ and in addition  use,  \eqref{esp-10}  and   \eqref{esp-20} and we use the self-adjointness of the powers of the operator $A$ in order to rewrite some terms. Then, we have
\begin{align*}
\gamma\kappa\|A^\frac{1}{2}w\|^2  = &-\dual{A^{-\beta}z}{i\lambda(I+\kappa A^\beta)w}+\alpha\dual{A^\frac{\theta}{2}v}{A^{1-\beta-\frac{\theta}{2}}(i\lambda u-f_1)}
\\
 + &\alpha\kappa\dual{A^\frac{\theta}{2} v}{A^\frac{2-\theta}{2}(i\lambda u-f_1)}+\delta\dual{A^\frac{\theta}{2}z}{(I+\kappa A^\beta)A^{\frac{\theta}{2}-\beta}w}
 \\
- &\gamma\|A^\frac{1-\beta}{2}w\|^2 -\dual{f_4}{(I+\kappa A^\beta)A^{-\beta}w},
\end{align*}
then
\begin{align*}
\gamma\kappa\|A^\frac{1}{2}w\|^2 = &-\dual{A^{-\beta}z}{i\lambda(I+\kappa A^\beta)w}-\alpha\dual{A^\frac{\theta}{2}(i\lambda v)}{A^{1-\beta-\frac{\theta}{2}}u}
\\
+ & \delta\kappa\dual{A^\frac{\theta}{2}z}{A^\frac{\theta}{2}w}   +\delta\dual{A^\frac{\theta}{2}z}{A^{\frac{\theta}{2}-\beta}w}- \alpha\dual{A^\frac{\theta}{2} v}{A^{1-\beta-\frac{\theta}{2}} f_1}\\
-& \alpha\kappa\dual{A^\frac{\theta}{2}(i\lambda v)}{A^\frac{2-\theta}{2} u}-\alpha\kappa\dual{A^\frac{\theta}{2}v}{A^\frac{2-\theta}{2}f_1}\\
-&\gamma\|A^\frac{1-\beta}{2}w\|^2-\dual{f_4}{(I+\kappa A^\beta)A^{-\beta}w}.
\end{align*}

Therefore,  taking real part,   we get
\begin{align}
\notag
\|A^\frac{1}{2}w\|^2  &\leq C{\rm Re}\{-\dual{A^{-\beta}z}{i\lambda(I+\kappa A^\beta)w}-\alpha\dual{A^\frac{\theta}{2}(i\lambda v)}{A^{1-\beta-\frac{\theta}{2}}u}\\
\notag
- & \alpha\dual{A^\frac{\theta}{2} v}{A^{1-\beta-\frac{\theta}{2}} f_1}
-\alpha\kappa\dual{A^\frac{\theta}{2}(i\lambda v)}{A^\frac{2-\theta}{2} u}-\alpha\kappa\dual{A^\frac{\theta}{2}v}{A^\frac{2-\theta}{2}f_1}\\ 
\label{Eq01EstAbeta2G}
+ & \delta\dual{A^\frac{\theta}{2}z}{A^{\frac{\theta}{2}-\beta}w}
+\delta\kappa\dual{A^\frac{\theta}{2}z}{A^\frac{\theta}{2}w}-\dual{f_4}{(I+\kappa A^\beta)A^{-\beta}w}\}.
\end{align}
From \eqref{esp-30}, we have $i\lambda(I+\kappa A^\beta)w=-\alpha A^2u-\gamma Az+(I+\kappa A^\beta)f_3$,  and estimate \eqref{dis-10},    from $\varepsilon>0$, exist $C_\varepsilon>0$ such that
\begin{multline}
\label{Eq02EstAbeta2G}
|\dual{A^{-\beta}z}{i\lambda(I+\kappa A^\beta)w}|= |\dual{A^{-\beta}z}{-\alpha A^2u-\gamma Az+(I+\kappa A^\beta)f_3}|\\
 =  |-\alpha\dual{A^\frac{\theta}{2}z}{A^{2-\frac{\theta}{2}-\beta}u}-\gamma\|A^\frac{1-\beta}{2}z\|^2+\dual{z}{(I+\kappa A^\beta)A^{-\beta}f_3}|\\
\leq   C_\varepsilon\|A^\frac{\theta}{2}z\|^2+\varepsilon\|A^{2-\frac{\theta}{2}-\beta}u\|^2+C\|A^\frac{1-\beta}{2}z\|^2 +C\|U\|_\mathcal{H}\|F\|_\mathcal{H}
\end{multline}
Now,  using estimative \eqref{Eq02EstAbeta2G} in \eqref{Eq01EstAbeta2G},  taking account the  estimate \eqref{dis-10},  applying Cauchy-Schwarz  and Young inequalities, we have
\begin{align}
\notag
\|A^\frac{1}{2}w\|^2  \leq &  C_\varepsilon\|A^\frac{\theta}{2}z\|^2+\varepsilon\|A^{2-\frac{\theta}{2}-\beta}u\|^2+C\|A^\frac{1-\beta}{2}z\|^2+C\|U\|_\mathcal{H}\|F\|_\mathcal{H}\\
\notag
+& C\{ |\dual{A^\frac{\theta}{2} z}{A^{1-\beta-\frac{\theta}{2}}u}| +|\dual{A^\frac{\theta}{2}z}{A^\frac{2-\theta}{2}u}|+|\dual{A^\frac{1}{2}f_2}{A^\frac{1}{2}u}|\\
\label{Eq03EstAbeta2G}
+& |\dual{A^\frac{1}{2}f_2}{A^{-\beta+\frac{1}{2}}u}| 
+|\dual{A^\frac{1}{2}v}{A^{\frac{1}{2}-\beta}
f_1}|+|\dual{A^\frac{1}{2}v}{A^\frac{1}{2}f_1}|\\
\notag
+& |\dual{A^\frac{\theta}{2}z}{A^\frac{\theta}{2}w}|+|\dual{A^\frac{\theta}{2}z}{A^{\frac{\theta}{2}-\beta}w}|
 +|\dual{f_4}{A^{-\beta}w}|+|\dual{f_4}{w}|\}.
\end{align}
 From $2\leq \theta+2\beta\Longleftrightarrow 2-\frac{\theta}{2}-\beta\leq 1$,  $1\leq \theta+\beta\Longleftrightarrow \frac{1-\beta}{2}\leq\frac{\theta}{2}$ and  as $1-\beta-\frac{\theta}{2}\leq 1$,   $ \frac{2-\theta}{2}\leq 1$,   $\frac{\theta}{2}\leq \frac{1}{2}$ and $ \frac{\theta}{2}-\beta\leq\frac{1}{2}$  into account  of the continuous embedding   $D(A^{\theta_2}) \hookrightarrow D(A^{\theta_1}),\;\theta_2>\theta_1$,     for  $\varepsilon>0$, there exist $C_\varepsilon>0$  such that
\begin{equation}
\label{Eq06EstAbeta2G}
\|A^\frac{1}{2}w\|^2   \leq \varepsilon\|Au\|^2+C\|F\|_\mathcal{H}\|U\|_\mathcal{H}\quad{\rm for}\quad 2\leq\theta+2\beta\;{\rm and}\; 1\leq \theta+\beta.
\end{equation}
Note that as the intersection of the regions $2\leq \theta+2\beta$ with $1\leq \theta+\beta$ in our region of interest $R_G=\{ (\theta,\beta)\in (0,1] \times(0,1) \}$ results in the region $2\leq \theta+2\beta$, so the estimate \eqref{Eq06EstAbeta2G} holds for $2\leq \theta+2\beta$.

From estimate  \eqref{Eq05EstAbeta2},   we   finish the proof of the lemma.
\end{proof}
Our main result in this subsubsection \eqref{3.2.1} is as follows:
\begin{theorem} Let  $S(t)=e^{\mathbb{B}t}$  strongly continuos semigroups of contractions on the Hilbert space $ \mathcal{H}$, the semigroups $S(t)$ is of Gevrey class $s$ for every $s>\dfrac{1}{\Phi_{s_1}}$ for $\Phi_{s_1}\in(0,1)$, as there exists a positive constant $C$ such that we have the resolvent estimate:
    \begin{equation}\label{Eq1.6Tebon2020}
   \lim\limits_{|\lambda|\to\infty} \sup  |\lambda |^{\Phi_{s_1}} \|(i\lambda I-\mathbb{B})^{-1}\|_{\mathcal{L}( \mathcal{H})} < \infty.
    \end{equation}
\end{theorem}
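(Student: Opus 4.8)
The plan is to apply the Gevrey characterization of Theorem \ref{Theorem1.2Tebon}: it suffices to verify the resolvent estimate \eqref{Eq1.6Tebon2020} with the exponent $\Phi_{s_1}=2\max\{\frac{1-\beta}{3-\beta},\frac{\theta}{2+\theta-\beta}\}$. Exactly as in the analyticity equivalence of Remark \ref{ObsEquivAnaly}, writing $U=(i\lambda I-\mathbb{B})^{-1}F$ this reduces to proving that, for $|\lambda|>\tau$,
\begin{equation*}
|\lambda|^{\Phi_{s_1}}\|U\|_\mathcal{H}^2\leq C_\tau\|F\|_\mathcal{H}\|U\|_\mathcal{H},
\end{equation*}
the only difference with the analyticity bound of Proposition \ref{limsup} being that the weight is now a fractional power $\Phi_{s_1}\in(0,1)$ instead of $1$. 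This is precisely what makes the estimate attainable on a region where $S(t)$ fails to be analytic.

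First I would assemble the available toolbox: the dissipation identity \eqref{dis-10} together with its consequences \eqref{dis-10A}--\eqref{dis-10BC}, which control $\|A^{\theta/2}z\|^2$ and the $\lambda$-weighted quantity $\lambda^2\|A^{\theta/2}v\|^2$; the \emph{analyticity-level} full-norm bounds \eqref{Eq05EstAbeta2}, \eqref{Eq06EstAbeta2} and \eqref{Exp1016} of Proposition \ref{limsup} (all of the form $C\|F\|_\mathcal{H}\|U\|_\mathcal{H}$ with no $\lambda$-weight); and the new estimate \eqref{Eq07EstAbeta2G} of Lemma \ref{Lemma01Gevrey} controlling $\|A^{1/2}w\|^2$ on the region $2\leq\theta+2\beta$. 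Then I would re-run the duality-product computations of Proposition \ref{limsup} on the resolvent system \eqref{esp-10}--\eqref{esp-40}, but now testing with $\lambda$-weighted and fractionally shifted multipliers so that each term of $\|U\|_\mathcal{H}^2$ acquires a power of $|\lambda|$. Every resulting cross term is split by Cauchy--Schwarz and Young and then interpolated, via Lions' inequality, between a dissipation-controlled $\lambda$-weighted quantity and an un-weighted full-norm quantity. Optimising the interpolation exponent so that the estimate closes produces exactly the two admissible powers $\frac{1-\beta}{3-\beta}$ (coming from the plate and fractional-inertia block) and $\frac{\theta}{2+\theta-\beta}$ (coming from the dissipative wave block); since the weight must be absorbed by both blocks simultaneously, the best global exponent is $2$ times the maximum, i.e. $\Phi_{s_1}$.

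The delicate step, and the reason Lemma \ref{Lemma01Gevrey} must be proved beforehand, is the control of the fractional inertia term $|\lambda|^{\Phi_{s_1}}\|A^{\beta/2}w\|^2$: the dissipation acts on $z$ and not on $w$, so a $\lambda$-weight cannot be placed on $w$ directly. The remedy is to interpolate $A^{\frac{\beta}{2}}w$ between $w$ (controlled by \eqref{Eq06EstAbeta2}) and $A^{\frac{1}{2}}w$ (controlled by \eqref{Eq07EstAbeta2G}), trading the extra derivative for a fractional $\lambda$-weight; this interpolation closes precisely when $2\leq\theta+2\beta$, which is why the whole argument is confined to the region $R_{G1}$. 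I expect the bookkeeping of exponents in these interpolations — keeping the residual power of $|\lambda|$ that survives on the right-hand side nonpositive after absorption, uniformly in $(\theta,\beta)\in R_{G1}$ — to be the main technical obstacle, whereas the passage from the weighted resolvent inequality to the Gevrey conclusion $s>\frac{1}{\Phi_{s_1}}$ is then immediate from Theorem \ref{Theorem1.2Tebon}.
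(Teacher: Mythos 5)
There is a genuine gap, and it sits exactly at the step you yourself flag as delicate. You propose to control $|\lambda|^{\Phi_{s_1}}\|A^{\beta/2}w\|^2$ by interpolating $A^{\beta/2}w$ between $w$ (bounded via \eqref{Eq06EstAbeta2}) and $A^{1/2}w$ (bounded via Lemma \ref{Lemma01Gevrey}), ``trading the extra derivative for a fractional $\lambda$-weight.'' This cannot work: both endpoint estimates, $\|w\|^2+\kappa\|A^{\beta/2}w\|^2\leq C\|F\|_\mathcal{H}\|U\|_\mathcal{H}$ and $\|A^{1/2}w\|^2\leq C_\tau\|F\|_\mathcal{H}\|U\|_\mathcal{H}$, are \emph{unweighted} in $\lambda$, so any interpolant between them is again unweighted --- no power of $|\lambda|$ can be created by interpolating two $\lambda$-free bounds. (Indeed \eqref{Eq06EstAbeta2} already controls $\|A^{\beta/2}w\|$ directly, so this interpolation returns nothing new.) The same problem is hidden in your treatment of $z$: the only $\lambda$-weighted quantity the dissipation hands you is $\lambda^2\|A^{\theta/2}v\|^2$ from \eqref{dis-10A}, which concerns $v$, not $z$, and you never say how a bound of the form $|\lambda|^{\frac{2\theta}{2+\theta-\beta}}\|z\|\leq C\|F\|_\mathcal{H}$ (the paper's \eqref{Eq119AnalyRR}) would be produced.

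The missing idea is the Liu--Renardy decomposition (\cite{LiuR95}), which is the actual engine of the paper's proof and is absent from your proposal. One writes $z=z_1+z_2$ and $w=w_1+w_2$, where $z_1,w_1$ solve auxiliary data-driven resolvent equations (first equations of \eqref{Eq110AnalyRR} and \eqref{Eq110AnalyR}) and hence obey parabolic-type bounds, while the remainders satisfy equations of the form $i\lambda(\cdot)=\mathrm{RHS}$ with $\mathrm{RHS}$ already controlled; dividing by $i\lambda$ yields the crucial weighted bounds in \emph{negative}-order norms, e.g.\ $\|A^{\frac{\beta}{2}-1}w_2\|\leq C|\lambda|^{-1}\{\|F\|_\mathcal{H}\|U\|_\mathcal{H}\}^{1/2}$ in \eqref{Eq113AnalyR}. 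Only then does Lions' interpolation enter, applied to $w_2$ (not to $w$): interpolating $A^{\beta/2}w_2$ between the weighted negative-order norm $A^{\frac{\beta}{2}-1}w_2$ and the unweighted $A^{1/2}w_2$ (where Lemma \ref{Lemma01Gevrey} and \eqref{Eq000Gevrey} are used, and where the restriction $2\leq\theta+2\beta$ genuinely enters) converts the integer gain $|\lambda|^{-1}$ at low order into the fractional gain $|\lambda|^{-\frac{1-\beta}{3-\beta}}$ at order $\beta/2$; the analogous argument for $z_2$, interpolating between $A^{\frac{\beta}{2}-1}z_2$ and $A^{\theta/2}z_2$, produces the exponent $\frac{\theta}{2+\theta-\beta}$. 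Your overall frame (reduction via Theorem \ref{Theorem1.2Tebon}, reuse of Proposition \ref{limsup}, role of Lemma \ref{Lemma01Gevrey}, and the final assembly through the $Au$ and $A^{1/2}v$ estimates) matches the paper, but without the decomposition step the fractional resolvent estimate is not attainable by the operations you list, so the proof as proposed does not close.
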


\begin{proof}[\bf Proof.]
To show \eqref{Eq1.6Tebon2020}, it is sufficient  to prove that 
\begin{equation}\label{Eq1.6Tebon2020A}
|\lambda|^{\Phi_{s_1}}\|U\|_\mathcal{H}\leq C_\tau\|F\|_\mathcal{H}
\end{equation}
for $\Phi_{s_1}\in (0,1)$.   And note that \eqref{Eq1.6Tebon2020A} implies
\begin{equation}\label{Eq1.6Tebon2020EA}
|\lambda|^{\Phi_{s_1}}\|(i\lambda I-\mathbb{B})^{-1}\|_{\mathcal{L}(\mathcal{H})} \leq |\lambda|^{\Phi_{s_1}}\dfrac{\|U\|_\mathcal{H}}{\|F\|_\mathcal{H}}\leq C_\tau.
\end{equation}
 
 \smallskip
 
 \noindent
 {\bf  Estimating term $|\lambda|\|z\|$.}
   Let us assume $\lambda\in \mathbb{R}$ with  $|\lambda|>1$, we shall borrow some ideas from \cite{LiuR95}.  Let us decompose $z$ as  $z=z_1+z_2$,  where $z_1\in D(A)$ and $z_2\in D(A^0)$, with
    \begin{equation}\label{Eq110AnalyRR}
    i\lambda z_1+ Az_1=f_4\quad{\rm and}\quad  i\lambda z_2=-\alpha Av+\gamma Aw-\delta A^\theta z+Az_1.
    \end{equation}
    Firstly,  applying the duality product  in  the first equation of \eqref{Eq110AnalyRR} by $z_1$, then by $A z_1$, and recalling that the operator $A$ is seft-adjoint,  we have
   \begin{equation}\label{Eq112AnalyRR}
    |\lambda|\|z_1\|+|\lambda|^\frac{1}{2}\|A^\frac{1}{2}z_1\|+\|Az_1\|\leq C\|F\|_\mathcal{H}.
    \end{equation}
 Applying the operator $A^{\frac{\beta}{2}-1}$ in the second equation of \eqref{Eq110AnalyRR}, we have 
    \begin{equation*}
    i\lambda A^{\frac{\beta}{2}-1}z_2=-\alpha A^\frac{\beta}{2} v+\gamma A^\frac{\beta}{2}w-\delta A^{\theta+\frac{\beta}{2}-1} z+ A^\frac{\beta}{2}z_1
    \end{equation*}
    then, as $\theta+\frac{\beta}{2}-1\leq \frac{\theta}{2}$,  $\frac{\beta}{2}\leq\frac{1}{2}$, applying  continuous embedding,  we get
    \begin{equation*} 
    |\lambda|^2\|A^{\frac{\beta}{2}-1} z_2\|^2\leq C\{\|A^\frac{1}{2}v\|^2+\|A^\frac{\beta}{2}w\|+\|A^\frac{\theta}{2}z\|^2\}+C\|A^\frac{1}{2}z_1\|^2,
    \end{equation*}
    from \eqref{ExponentialP1},  inequality \eqref{Eq112AnalyRR} and from the estimates \eqref{dis-10} and \eqref{dis-10A}, for $0\leq\theta\leq 1$ and $0<\beta\leq 1$ and as $-1\leq-\frac{2\theta}{2+\theta-\beta}$,  we obtain
    \begin{align*}
        |\lambda|^2\|A^{\frac{\beta}{2}-1}z_2\|^2 \leq & C\{\|F\|_\mathcal{H}\|U\|_\mathcal{H}+|\lambda|^{-1}\|F\|^2_\mathcal{H}\}\\
        \leq & C|\lambda|^{-\frac{2\theta}{2+\theta-\beta}}\{|\lambda|^\frac{2\theta}{2+\theta-\beta}\|F\|_\mathcal{H}\|U\|_\mathcal{H}+\|F\|^2_\mathcal{H}\}
    \end{align*}
    then, 
    \begin{equation}\label{Eq113AnalyRR}
    \|A^{\frac{\beta}{2}-1}z_2\|^2\leq C|\lambda|^\frac{2\beta-4\theta-4}{2+\theta-\beta}\{|\lambda|^\frac{2\theta}{2+\theta-\beta}\|F\|_\mathcal{H}\|U\|_\mathcal{H}+\|F\|^2_\mathcal{H}\}.
    \end{equation}
    On the  other hand, from $z_2=z-z_1$,  \eqref{dis-10},  as $\frac{\theta}{2}\leq\frac{1}{2}$ and using continuous embedding in inequality  \eqref{Eq112AnalyRR},  we  have
    \begin{align} \notag
    \|A^\frac{\theta}{2} z_2\|^2   \leq & C\{\|A^\frac{\theta}{2} z\|^2+\|A^\frac{\theta}{2}z_1\|^2\}\\
    \label{Eq114AnalyRR}
    \leq &  C|\lambda|^{-\frac{2\theta}{2+\theta-\beta}}\{|\lambda|^\frac{2\beta}{2+\theta-\beta}\|F\|_\mathcal{H}\|U\|_\mathcal{H}+\|F\|^2_\mathcal{H}\}.
    \end{align}
    Now, by Lions' interpolations inequality($\frac{\beta}{2}-1< 0\leq\frac{\theta}{2}$),  we derive
    \begin{equation}\label{Eq115AnalyRR}
     \|z_2\|^2\leq C(\|A^{\frac{\beta}{2}-1}z_2\|^2)^\frac{\theta}{2+\theta-\beta}(\|A^\frac{\theta}{2}z_2\|^2)^\frac{2-\beta}{2+\theta-\beta}.
    \end{equation}
    From \eqref{Eq113AnalyRR}, we have
    \begin{multline}\label{Eq116AnalyRR}
    (\|A^{\frac{\beta}{2}-1}z_2\|^2)^\frac{\theta}{2+\theta-\beta} \leq  C|\lambda|^{-\big(\frac{4+4\theta-2\beta}{2+\theta-\beta}\big)\big(\frac{\theta}{2+\theta-\beta}\big)}\{|\lambda|^\frac{2\theta}{2+\theta-\beta}\|F\|_\mathcal{H}\|U\|_\mathcal{H}\\
    +\|F\|^2_\mathcal{H}\}^\frac{\theta}{2+\theta-\beta},
    \end{multline}
  and from \eqref{Eq114AnalyRR},  one can have
    \begin{multline}\label{Eq117AnalyRR}
    (\|A^\frac{\theta}{2} z_2\|^2)^\frac{2-\beta}{2+\theta-\beta}\leq C|\lambda|^{-\big(\frac{2\theta}{2+\theta-\beta}\big)\big(\frac{2-\beta}{2+\theta-\beta}\big)}\{|\lambda|^\frac{2\theta}{2+\theta-\beta}\|F\|_\mathcal{H}\|U\|_\mathcal{H}\\
    +\|F\|^2_\mathcal{H}\}^\frac{2-\beta}{2+\theta-\beta}.
    \end{multline}
    Then, using \eqref{Eq116AnalyRR} and \eqref{Eq117AnalyRR} in \eqref{Eq115AnalyRR}, we derive
    \begin{equation}\label{Eq118AnalyRR}
     \|z_2\|^2\leq C|\lambda|^{-\frac{4\theta}{2+\theta-\beta}}\{|\lambda|^\frac{2\theta}{2+\theta-\beta} \|F\|_\mathcal{H}\|U\|_\mathcal{H}+\|F\|^2_\mathcal{H}\}.
    \end{equation}
    Therefore as $\|z\|^2\leq C\{\|z_1\|^2+\|z_2\|^2\}$  and $-2\leq -\frac{4\theta}{2+\theta-\beta}$, from  inequality  \eqref{Eq112AnalyRR} and \eqref{Eq118AnalyRR}, we have
    \begin{equation*}
    |\lambda|\|z\|^2\leq C|\lambda|^\frac{2-3\theta-\beta}{2+\theta-\beta}\{|\lambda|^\frac{2\theta}{2+\theta-\beta}\|F\|_\mathcal{H}\|U\|_\mathcal{H}+\|F\|^2_\mathcal{H}\}.
    \end{equation*}
Therefore  
    \begin{equation}\label{Eq119AnalyRR}
    |\lambda|^\frac{2\theta}{2+\theta-\beta}\|z\|\leq C\|F\|_\mathcal{H},\quad {\rm for}\quad  0\leq \theta\leq 1\quad{\rm and}\quad 0<\beta\leq 1.
    \end{equation}
    Conversely,   we will now estimate the term 
$|\lambda|\|A^\frac{\beta}{2}w\|^2$,  
assuming that $\lambda\in\R$ with  
$|\lambda|>1$.  Let us decompose 
$w$ as  
$w=w_1+w_2$, where 
$w_1\in D(A)$ and 
$w_2\in D(A^\frac{\beta}{2})$, with
    \begin{multline}\label{Eq110AnalyR}
    i\lambda(I+\kappa A^\beta )w_1+Aw_1=(I+\kappa A^\beta)f_3,\\ i\lambda(I+\kappa A^\beta )w_2=-\alpha A^2u-\gamma Az+Aw_1.
    \end{multline}
    Firstly,  applying the product duality of  the first equation in \eqref{Eq110AnalyR} by $w_1$, we have
    \begin{equation}\label{Eq111AnalyR}
    i\lambda\|w_1\|^2+i\lambda\kappa\|A^\frac{\beta}{2}w_1\|^2+ \|A^\frac{1}{2} w_1\|^2=\dual{f_3}{w_1}+\kappa\dual{A^\frac{\beta}{2}f_3} {A^\frac{\beta}{2}w_1}.
    \end{equation}
 Taking the imaginary part of  \eqref{Eq111AnalyR} first  and subsequently the real part and applying Cauchy-Schwarz inequality,  we have
    \begin{multline*}
    |\lambda|\|w_1\|^2+\kappa|\lambda|\|A^\frac{\beta}{2}w_1\|\leq |\rm{Im}\dual{f_3}{w_1}|+\kappa|\rm{Im}\dual{A^\frac{\beta}{2}f_3}{A^\frac{\beta}{2}w_1}|\\
   {\rm and}\qquad \|A^\frac{1}{2}w_1\|\leq C\|F\|_\mathcal{H}.
    \end{multline*}
    Equivalently, 
    \begin{equation}\label{Eq112AnalyR}
  |\lambda| \|A^\frac{\beta}{2}w_1\|\leq C\|F\|_\mathcal{H}\qquad{\rm and}\qquad \|A^\frac{1}{2}w_1\|\leq C\|F\|_\mathcal{H}.
    \end{equation}
 From $A^\frac{\beta}{2}w_2=A^\frac{\beta}{2}w-A^\frac{\beta}{2}w_1$, using \eqref{Eq112AnalyR}$_2$, we have
\begin{eqnarray*}
\|A^\frac{\beta}{2}w_2\|^2 &\leq & C \{\|A^\frac{\beta}{2}w\|^2+\|A^\frac{\beta}{2}w_1\|^2\}.
\end{eqnarray*}
From  \eqref{Eq06EstAbeta2} and second inequality  of  \eqref{Eq112AnalyR} for  $\tau>0$ there exists a positive constant $C_\tau$ such that for $|\lambda|\geq\tau$ we obtain
    \begin{equation}\label{Eq01EstW}
    \|A^\frac{\beta}{2}w_2\|^2\leq C_\tau\{ \|F\|_\mathcal{H}\|U\|_\mathcal{H}+|\lambda|^{-2}\|F\|^2\}
    \end{equation}
 From second equation of  \eqref{Eq110AnalyR}, we have
    \begin{eqnarray*}
|\lambda|\|(I+\kappa A^\beta)[A^{-1}w_2]|| & \leq &C\{ \|Au\|+\|z\|\}+\|w_1\|,
    \end{eqnarray*}
    then, we  find
    \begin{equation*}
    |\lambda|(\|A^{-1} w_2\|^2+\kappa\|A^{\frac{\beta}{2}-1}w_2\|^2)^\frac{1}{2} \leq C\{\|Au\|+\|z\|\}+\|w_1\|,
    \end{equation*}
    applying Cauchy-Schwarz and Young inequalities and using first inequality of \eqref{Eq112AnalyR} and estimates \eqref{dis-10},  \eqref{Eq05EstAbeta2}  for $|\lambda|^\frac{1}{2}>1$,  for $0\leq\theta\leq 1$ and $0<\beta\leq 1$,  we obtain
    \begin{eqnarray}
    \label{Eq113AnalyR}
    \|A^{\frac{\beta}{2}-1}w_2\|&\leq &C|\lambda|^{-1}\{\|F\|_\mathcal{H}\|U\|_\mathcal{H}\}^\frac{1}{2}
    \end{eqnarray}
    On the  other hand,  from $w_2=w-w_1$, we  have
   $ \|A^\frac{1}{2} w_2\|\leq \|A^\frac{1}{2} w\|+\|A^\frac{1}{2}w_1\|. $
    From  Lemma \eqref{Lemma01Gevrey} and \eqref{Eq112AnalyR},    we have
 \begin{equation}\label{Eq000Gevrey}
  \|A^\frac{1}{2} w_2\|^2\leq C_\tau\{\|F\|_\mathcal{H}\|U\|_\mathcal{H}+\|F\|^2_\mathcal{H}\}\qquad{\rm for}\qquad 2\leq\theta+2\beta.
    \end{equation}
 Now, by Lions' interpolations inequality ($\frac{\beta}{2}\in[\frac{\beta}{2}-1,\frac{1}{2}]$)  and estimates \eqref{Eq113AnalyR} and \eqref{Eq000Gevrey},  we derive
 \begin{align*}
 \|A^\frac{\beta}{2}w_2|| & \leq  C\|A^{\frac{\beta}{2}-1}w_2\|^\frac{1-\beta}{3-\beta}\|A^\frac{1}{2}w_2\|^\frac{2}{3-\beta}\\
 \leq & C |\lambda|^{-\frac{1-\beta}{3-\beta}}\{\|F\|_\mathcal{H}\|U\|_\mathcal{H}+\|F\|^2_\mathcal{H}\}^\frac{1}{2}\quad{\rm for}\quad 2\leq\theta+2\beta.
 \end{align*}
On the other hand,  as $\|A^\frac{\beta}{2}w\|\leq C\{\|A^\frac{\beta}{2}w_1\|+\|A^\frac{\beta}{2}w_2\|\}$ and $-\frac{1-\beta}{3-\beta}\geq -\frac{1}{2}$,   for $2\leq\theta+2\beta$ we have
 \begin{equation*}
 \|A^\frac{\beta}{2} w\| \leq  C\{ |\lambda|^{-\frac{1-\beta}{3-\beta}}+|\lambda|^{-\frac{1}{2}}\}\{\|F\|_\mathcal{H}\|U\|_\mathcal{H}\}^\frac{1}{2}
 \leq C|\lambda|^{-\frac{1-\beta}{3-\beta}}\{\|F\|_\mathcal{H}\|U\|_\mathcal{H}\}^\frac{1}{2}.
 \end{equation*}
   Equivalently,  
 \begin{equation}\label{Eq001Gevrey}
 |\lambda|\|A^\frac{\beta}{2} w\|^2 \leq C|\lambda|^\frac{1+\beta}{3-\beta}\|F\|_\mathcal{H}\|U\|_\mathcal{H}\qquad{\rm for }\qquad 2\leq\theta+2\beta.
 \end{equation}
 Or still
 \begin{equation}\label{Abetamedio}
 |\lambda|^{\frac{2-2\beta}{3-\beta}}\|w\|\leq C\|F\|_\mathcal{H}\qquad{\rm for }\qquad 2\leq\theta+2\beta.
 \end{equation}

Now, we will estimate the term $|\lambda|\|Au\|^2$.
Taking the duality product between equation 
\eqref{esp-30}  and $\lambda u$ and using 
the equation \eqref{esp-10}, we have
\begin{align*}
        \alpha\lambda \|Au\|^2=&\lambda\dual{(I+\kappa A^\beta)w}{w}+\dual{\lambda(I+\kappa A^\beta)w}{f_1}-\gamma\dual{\dfrac{\lambda}{|\lambda|^\frac{1}{2}}z}{|\lambda|^\frac{1}{2}Au} 
       \\
       &+ \dual{(I+\kappa A^\beta)f_3}{-iw-if_1}\\
        =& \lambda\|w\|^2+\kappa\lambda\|A^\frac{\beta}{2}w\|^2+\dual{i\alpha A^2u+i\gamma Az-i(I+\kappa A^\beta)f_3}{f_1}\\
         -&\gamma\dual{\dfrac{\lambda}{|\lambda|^\frac{1}{2}}z}{|\lambda|^\frac{1}{2}Au} -i\dual{(I+\kappa A^\beta)f_3}{w}+i\dual{(I+\kappa A^\beta)f_3}{f_1} .
    \end{align*}
    Applying Cauchy-Schwarz and Young inequalities, for $\varepsilon>0$, there exists a positive constant $C_\varepsilon$, independent of $\lambda$,  such that:
    \begin{align*}
    |\lambda|\|Au\|^2&\leq  C|\lambda|\|A^\frac{\beta}{2}w\|^2+C\{|\dual{Au}{Af_1}|+|\dual{z}{Af_1}|\}
    \\
    &  +K_\varepsilon|\lambda|\|z\|^2+\varepsilon|\lambda|\|Au\|^2+C\{ |\dual{f_3}{w}|+|\dual{A^\frac{\beta}{2} f_3}{A^\frac{\beta}{2} w}|\}.
    \end{align*}
    Then,  applying Cauchy-Schwarz and Young inequalities,   $\|F\|_\mathcal{H}$ and $\|U\|_\mathcal{H}$, we have
    \begin{equation}\label{Eq001AnalyR1N}
    |\lambda|\|Au\|^2 \leq  C|\lambda|\|A^\frac{\beta}{2}w\|^2+C_\varepsilon|\lambda|\|z\|^2+C_\tau \|F\|_\mathcal{H}\|U\|_\mathcal{H}.
    \end{equation}
   Let's define $\Phi_{s_1}=  \Phi_{s_1}(\theta,\beta)$, as follows
    \begin{equation}\label{Beta+Theta}
    \Phi_{s_1}:=2\max\bigg\{\dfrac{1-\beta}{3-\beta}, \dfrac{\theta}{2+\theta-\beta}\bigg\}\quad{\rm such\quad that}, \quad 2\leq \theta+2\beta.
    \end{equation}
   
      Therefore,  from   \eqref{Beta+Theta},  we have
       \begin{equation}\label{Eq001AnalyRN2}
    |\lambda|^{\Phi_{s_1}}\|Au\| \leq  C_\tau \|F\|_\mathcal{H} \qquad{\rm for }\qquad 2\leq\theta+2\beta.
    \end{equation}

Finally,  we will estimate the term 
$|\lambda|\|A^\frac{1}{2}v\|^2$.
Taking the duality product between equation 
\eqref{esp-30}  and $w$ and using the equation 
\eqref{esp-10}, we have
\begin{align}
\label{Eq001AnalyR}
 i&\lambda [\|w\|^2+\kappa\|A^\frac{\beta}{2}w\|^2-
\alpha\|Au\|^2] \\
\notag
&= -\gamma\langle 
A^\frac{1}{2}z, A^\frac{1}{2}w\rangle +\alpha\langle Au, 
A f_1\rangle +\langle f_3, w\rangle
+\kappa\dual{A^\frac{\beta}
{2}f_3}{ A^\frac{\beta}{2}w}.
\end{align}
Now, taking the duality product between equation \eqref{esp-40}  and $z$ and using the equation \eqref{esp-20}, we have
\begin{multline}\label{Eq002AnalyR}
i\lambda \alpha \| A^\frac{1}{2} v \|^2=i\lambda\|z\|^2+\delta \|A^\frac{\theta}{2}z \|^2-\alpha\langle A^\frac{1}{2} v,A^\frac{1}{2}  f_2\rangle\\
-\gamma\langle A^\frac{1}{2}w, A^\frac{1}{2}z\rangle -\langle f_4, z\rangle.
\end{multline}
Adding the equations \eqref{Eq001AnalyR}  and  \eqref{Eq002AnalyR},    taking the imaginary part  and noting that
$\rm{Im}\{\dual{A^\frac{1}{2}z}{A^\frac{1}{2}w}+\dual{A^\frac{1}{2}w}{A^\frac{1}{2}z}\}=0$, we obtain
\begin{align}\nonumber
 \lambda\alpha\|A^\frac{1}{2}v\|^2  = & \rm{Im} \{ \alpha \langle Au, Af_1\rangle+ \langle f_3, w\rangle+\kappa\dual{A^\frac{\beta}{2}f_3}{ A^\frac{\beta}{2}.w}-\alpha \langle A^\frac{1}{2}v, A^\frac{1}{2}f_2 \rangle\\
 \label{Eq003AnalyR}
 &-\langle f_4,z\rangle \}+\lambda\alpha\|Au\|^2 +\lambda[\|z\|^2-\|w\|^2-\kappa\|A^\frac{\beta}{2}w\|^2]
\end{align}
Otherwise,  now applying Cauchy-Schwarz and Young inequalities in \eqref{Eq003AnalyR}, using estimates   \eqref{Eq119AnalyRR}, \eqref{Eq001Gevrey}   and \eqref{Eq001AnalyRN2}, we find in $R_G$: 
    \begin{equation}\label{Eq104AEAnaly}
    |\lambda|^{\Phi_{s_1}}\|A^\frac{1}{2}v\| \leq  C_\tau\|F\|_\mathcal{H}\qquad\text{for}\qquad 2\leq \theta+2\beta.
    \end{equation}
    Finally,  from  the  estimates \eqref{Eq119AnalyRR}, \eqref{Eq001Gevrey}, \eqref{Eq001AnalyRN2} and  \eqref{Eq104AEAnaly},   for $0<\beta <1$, $0\leq\theta \leq 1$,  we find
    \begin{equation}\label{ClaseGevrey}
    |\lambda|^{\Phi_{s_1}}\|U\|_\mathcal{H}\leq C \|F\|_\mathcal{H}\qquad\text{for}\qquad 2\leq\theta+2\beta,
    \end{equation}
 in the {\it Gevrey Region} $R_{G1}$.  Thus,  the proof of this theorem is concluded.
 \end{proof}

\subsubsection{Gevrey Class  $s_2>\frac{1}{\phi_2}=\frac{2(2+\theta-\beta)}{\theta}$  for $\frac{3\theta}{4}+\frac{\beta}{2}\leq 1$}
\label{3.2.2}

\begin{center}
    \begin{tikzpicture}[scale=3, rotate=0]
    \label{Figura07}
    \draw[thick,->] (0,0) -- (1.25,0);
    \draw[thick,->] (0,0) -- (0,1.25);
    \foreach \x in {0,1}
    \draw (\x cm,1pt) -- (\x cm,-1pt) node[anchor=north] {$\x$};
    \foreach \y in {0,1}
  \draw[thick,->] (0,0) -- (1.25,0)
    node[anchor=north west] {$\theta$ axis};
    \draw[thick,->] (0,0) -- (0,1.25) node[anchor=south east] {$\beta$ axis};
     \draw(0.56,1.1) node[anchor=center] {$\qquad(\frac{2}{3},1)$};
 \draw(1.08,0.54) node[anchor=center] {$\qquad(1, \frac{1}{2})$};
        \draw (-0.1, 1) node[anchor=center] {$1$};
    \coordinate[label=left:] (a) at (0,0);
    \coordinate[label=right:] (b) at (0,1);
    \coordinate[label=left:] (c) at (2/3,1);
    \coordinate[label=left:] (d) at (1,1/2);
    \coordinate[label=left:] (e) at (1,0);
    \draw[rectangle, fill=black!30!white] (a)  -- (b) --  (c) -- (d) -- (e) -- (a) -- cycle;
   \draw[black!25!black, ultra thick] (0, 1) -- (2/3, 1);
   \draw[black!25!black, ultra thick] (1,1/2) -- (2/3, 1);
     \draw[black!25!black, ultra thick] (1, 0) -- (1, 1/2);
 \draw[fill=black!95!black, dashed] (2/3,1) circle (0.02);
    \draw[fill=black!95!black, dashed] (1,1/2) circle (0.02);
      \draw[fill=black!95!black, dashed] (0,1) circle (0.02);
     \draw[fill=white!95!black, dashed] (0,0) circle (0.02);
      \draw[fill=white!95!black, dashed] (1,0) circle (0.02);
  
    \end{tikzpicture}

    {\bf Fig.  07}: Gevrey  region 
    $G_{R2}:=\{ (\theta,\beta)\in R_G/ \frac{3\theta}{4}
    +\frac{\beta}{2}\leq 1\}-\{(0,0),(1,1)\} $.
\end{center}

Our main result in this subsubsection
is as  follows:

\begin{theorem} The linear operator $\mathbb{B}$  is the generator of a strongly continuos semigroups of contractions on the Hilbert space $ \mathcal{H}$, associated with the system \eqref{ISplacas-20}-\eqref{ISplacas-30} the strongly continuos   semigroups $S(t)=e^{\mathbb{B}t}$ is  of Gevrey class $s_2$ for every $s_2>\dfrac{1}{\phi_2}=\frac{2(2+\theta-\beta)}{\theta}$ for $\phi_2\in(0,1)$,  as there exists a positive constant $C$ such that we have the resolvent estimate:
    \begin{equation}\label{Eq1.6Tebon2020B}
   \lim\limits_{|\lambda|\to\infty} \sup  |\lambda |^\frac{\theta}{2(2+\theta-\beta)} \|(i\lambda I-\mathbb{B})^{-1}\|_{\mathcal{L}( \mathcal{H})} < \infty, \quad{\rm for} \quad \frac{3\theta}{4}+\frac{\beta}{2}\leq 1.
    \end{equation}
\end{theorem}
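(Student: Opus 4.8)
The plan is to invoke Theorem~\ref{Theorem1.2Tebon} with $\phi_2=\frac{\theta}{2(2+\theta-\beta)}$, so that the Gevrey conclusion reduces to the resolvent bound \eqref{Eq1.6Tebon2020B}. As in Remark~\ref{ObsEquivAnaly}, this is equivalent to exhibiting, for each $\tau>0$, a constant $C_\tau>0$ with
\[
|\lambda|^{\phi_2}\|U\|_\mathcal{H}\leq C_\tau\|F\|_\mathcal{H},\qquad |\lambda|>\tau,
\]
where $U=(u,v,w,z)$ solves \eqref{esp-10}--\eqref{esp-40}. I would prove this by showing that each of the five blocks of $\|U\|_\mathcal{H}^2=\alpha\|Au\|^2+\alpha\|A^\frac{1}{2}v\|^2+\|w\|^2+\kappa\|A^\frac{\beta}{2}w\|^2+\|z\|^2$ carries at least the decay $|\lambda|^{-\phi_2}$ on the region $R_{G2}$, the binding exponent being produced by the dissipative component and then propagated through the coupling.

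The term $\|z\|$ is already more than enough: estimate \eqref{Eq119AnalyRR} gives $|\lambda|^{\frac{2\theta}{2+\theta-\beta}}\|z\|\leq C\|F\|_\mathcal{H}$, and since $\tfrac{2\theta}{2+\theta-\beta}=4\phi_2\geq\phi_2$ this controls $z$ with room to spare. The crux, and the main obstacle, is the rotational block $\|A^\frac{\beta}{2}w\|$. In Subsubsection~\ref{3.2.1} the decay of this block came from interpolating $w_2$ (from the splitting \eqref{Eq110AnalyR}) between the negative-order bound \eqref{Eq113AnalyR} and the high endpoint $\|A^\frac{1}{2}w\|$ furnished by Lemma~\ref{Lemma01Gevrey}; but that lemma requires $2\leq\theta+2\beta$, a condition that is violated on the genuinely new part of $R_{G2}$ (where $\theta+2\beta<2$), so the high endpoint is no longer available there. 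Since the $w_2$-equation in \eqref{Eq110AnalyR} carries the term $A^2u$, it cannot by itself supply a controlled endpoint above level $\tfrac{\beta}{2}$, so I would instead propagate the now-strong $z$-decay through the chain \eqref{esp-10}--\eqref{esp-40}, starting from the region-free fact $|\lambda|\|A^\frac{\beta}{2}w_1\|\leq C\|F\|_\mathcal{H}$ in \eqref{Eq112AnalyR} and the decay-free bounds \eqref{Eq01EstW} and \eqref{Eq06EstAbeta2}, together with the baseline $\|U\|_\mathcal{H}\leq C\|F\|_\mathcal{H}$ from Proposition~\ref{limsup}. A chained Lions interpolation then transfers a fraction of the $z$-decay onto $\|A^\frac{\beta}{2}w\|$; the arithmetic of this chain is exactly where the hypothesis $\frac{3\theta}{4}+\frac{\beta}{2}\leq1$ enters, being precisely the condition under which the worst exponent in the chain is still $\geq\phi_2=\tfrac14\cdot\tfrac{2\theta}{2+\theta-\beta}$. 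Pinning down the correct interpolation endpoints and verifying this closure is the hard part.

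Once $z$ and $A^\frac{\beta}{2}w$ carry the $\phi_2$-decay, the remaining two blocks follow the template of Subsubsection~\ref{3.2.1} with no new difficulty. Taking the duality product of \eqref{esp-30} with $\lambda u$ and using \eqref{esp-10} reproduces \eqref{Eq001AnalyR1N}, namely $|\lambda|\|Au\|^2\leq C|\lambda|\|A^\frac{\beta}{2}w\|^2+C_\varepsilon|\lambda|\|z\|^2+C_\tau\|F\|_\mathcal{H}\|U\|_\mathcal{H}$, into which the two controlled blocks feed to yield the $\phi_2$-decay for $\|Au\|$. For $\|A^\frac{1}{2}v\|$ I would add the identities \eqref{Eq001AnalyR} and \eqref{Eq002AnalyR}, take imaginary parts, and use the cancellation $\mathrm{Im}\{\dual{A^\frac{1}{2}z}{A^\frac{1}{2}w}+\dual{A^\frac{1}{2}w}{A^\frac{1}{2}z}\}=0$ to reduce, as in \eqref{Eq003AnalyR}, the potential block to the already-estimated $\|Au\|$, $\|A^\frac{\beta}{2}w\|$ and $\|z\|$. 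Summing the five blocks gives $|\lambda|^{\phi_2}\|U\|_\mathcal{H}\leq C_\tau\|F\|_\mathcal{H}$ on $R_{G2}$, which is \eqref{Eq1.6Tebon2020B} and completes the proof.
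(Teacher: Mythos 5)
Your skeleton matches the paper's: reduce the Gevrey claim to the resolvent bound with $\phi_2=\frac{\theta}{2(2+\theta-\beta)}$ via Theorem~\ref{Theorem1.2Tebon}, control the blocks of $\|U\|_\mathcal{H}^2$ separately, recover $\|Au\|$ from \eqref{Eq001AnalyR1N}, and recover $\|A^\frac{1}{2}v\|$ from the imaginary-part identity. You also correctly locate the obstruction: Lemma~\ref{Lemma01Gevrey} requires $2\leq\theta+2\beta$, so the $R_{G1}$ route to the rotational block $\|A^\frac{\beta}{2}w\|$ is unavailable on the new part of $R_{G2}$. But at precisely that point your proposal stops being a proof. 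The sentence ``a chained Lions interpolation then transfers a fraction of the $z$-decay onto $\|A^\frac{\beta}{2}w\|$'' is not a mechanism: Lions' inequality interpolates different powers of $A$ applied to the \emph{same} vector, and cannot move decay from $z$ to $w$, which are different components of $U$. Any transfer between them must pass through the equations, i.e.\ through a multiplier identity, and that identity is exactly the step you leave open (you concede it is ``the hard part'').

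The paper closes this gap with two ingredients, neither of which appears in your plan. First (Lemma~\ref{Lemma13A}), a multiplier computation --- duality of \eqref{esp-30} with $w$, of \eqref{esp-40} with $\frac{\alpha}{\gamma}Au$, and re-substitution of $\alpha A^2u$ from \eqref{esp-30} --- yields
\begin{equation*}
|\lambda|\|A^\frac{\beta}{2}w\|^2\leq C_\varepsilon|\lambda|\|A^\frac{\theta}{4}z\|^2
+C_\tau\{\|F\|_\mathcal{H}\|U\|_\mathcal{H}+\|F\|^2_\mathcal{H}\},
\end{equation*}
and this is where the hypothesis $\frac{3\theta}{4}+\frac{\beta}{2}\leq 1$ actually enters: it is the embedding condition $\theta+\frac{\beta}{2}-1\leq\frac{\theta}{4}$ needed to absorb the cross term $\dual{\frac{\lambda}{\sqrt{|\lambda|}}A^\frac{\beta}{2}w}{\sqrt{|\lambda|}A^{\theta+\frac{\beta}{2}-1}z}$ --- not, as you guessed, an exponent-bookkeeping condition inside an interpolation chain. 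Second (Lemma~\ref{Lemma013}), the right-hand side above is \emph{not} controlled by the level-zero estimate \eqref{Eq119AnalyRR} that you declare ``more than enough'': one needs decay of $z$ at the level $\theta/4$, which the paper obtains by a fresh decomposition $z=z_1+z_2$ with $i\lambda z_1+Az_1=f_4$ (tested against $A^\frac{\theta}{2}z_1$) followed by a Lions interpolation of $A^\frac{\theta}{4}z_2$ between $A^\frac{\beta-2}{2}z_2$ and $A^\frac{\theta}{2}z_2$, giving $|\lambda|\|A^\frac{\theta}{4}z\|^2\leq C_\tau|\lambda|^\frac{2-\beta}{2+\theta-\beta}\{\|F\|_\mathcal{H}\|U\|_\mathcal{H}+\|F\|^2_\mathcal{H}\}$. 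With these two lemmas the remaining blocks indeed follow along the lines you describe; without them the argument does not close, so the proposal has a genuine gap at its crux.
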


\begin{proof}[\bf Proof.]
The proof of this theorem will be carried 
out by estimating through the next lemmas in  each
term of  $|\lambda|\|U\|^2_\mathcal{H}$,  
as follows:

 The following lemma estimates the term 
 $|\lambda|\|A^\frac{\beta}{2}w\|^2$:
 
\begin{lemma}\label{Lemma13A}
Let $\tau > 0$. There exists a positive constant $C_\tau$ such that the solutions of Eqs. \eqref{ISplacas-20}--\eqref{ISplacas-40}, for $|\lambda| \geq \tau$, satisfy the following inequalities:
\begin{equation}\label{Alternativa002}
|\lambda|\|A^\frac{\beta}{2}w\|^2\leq C_\varepsilon |\lambda| \|A^\frac{\theta}{4}z\|^2+C_\tau\{\|F\|_\mathcal{H}\|U\|_\mathcal{H}+\|F\|^2_\mathcal{H}\},
\end{equation}
 for  $\frac{3\theta}{4}+\frac{\beta}{2}\leq 1$.
\end{lemma}

\begin{proof}[\bf Proof.]
Now,    taking the duality product between equation \eqref{esp-30}  and  $w$, using the self-adjointness of the powers of the operator $A$,   we have 
\begin{equation}\label{Eq03GevreyBeta1}
i\lambda\{\|w\|^2+\kappa\|A^\frac{\beta}{2}w\|^2\}=-\alpha\dual{Au}{Aw}-\gamma\dual{z}{A w}+\dual{(I+\kappa A^\beta)f_3}{w}.
\end{equation}
Now, taking the duality product between  $\frac{\alpha}{\gamma}Au$ and  \eqref{esp-40},   we obtain
\begin{align}
\nonumber
-\alpha\dual{Au}{Aw} =&-\dfrac{\alpha}{\gamma}\dual{Au}{i\lambda z}-\dfrac{\alpha^2}{\gamma}\dual{Au}{Av}-\dfrac{\alpha\delta}{\gamma}\dual{Au}{A^ \theta z}+\dfrac{\alpha}{\gamma}\dual{Au}{f_4}\\
\notag
=& \dfrac{\alpha}{\gamma} \dual{Aw }{z}+\dfrac{\alpha}{\gamma}\dual{Af_1}{z}-\dfrac{\alpha^2}{\gamma}\dual{A^2u}{v} -\dfrac{\alpha\delta}{\gamma}\dual{A^2u}{A^{\theta-1} z}\\
\label{Eq04GevreyBeta1}
+&\dfrac{\alpha}{\gamma} \dual{Au}{f_4}.
\end{align}
Using \eqref{Eq04GevreyBeta1} in \eqref{Eq03GevreyBeta1},  we have
\begin{align}
\nonumber
i\lambda\{\|w\|^2+\kappa\|A^\frac{\beta}{2}w\|^2\} =& \dfrac{\alpha}{\gamma} \dual{Aw }{z}+\dfrac{\alpha}{\gamma}\dual{Af_1}{z}-\dfrac{\alpha^2}{\gamma}\dual{A^2u}{v} \\
\label{Eq05GevreyBeta1}
- &\dfrac{\alpha\delta}{\gamma}\dual{A^2u}{A^{\theta-1} z}
+\dfrac{\alpha}{\gamma} \dual{Au}{f_4}-\gamma\dual{z}{A w} \\
\notag
+& \dual{f_3}{w}+\kappa\dual{A^\frac{\beta}{2}f_3}{A^\frac{\beta}{2}w}.
\end{align}
From \eqref{esp-30},  we have  $\alpha A^2u =-i\lambda(I+\kappa A^\beta)w-\gamma Az+(I+\kappa A^\beta)f_3$,   then
\begin{align}
\nonumber
\alpha\dual{A^2u} {v}= & \dual{w}{z}+\dual{w}{f_2}+\kappa\dual{A^\frac{\beta}{2}w}{A^\frac{\beta}{2}z}+\kappa\dual{A^\frac{\beta}{2}w}{A^\frac{\beta}{2}f_2}-\gamma\dual{z}{Av}\\
\notag
+ & \dual{f_3}{v} +\kappa\dual{A^\frac{\beta}{2} f_3}{A^\frac{\beta}{2}v}\\
\nonumber
= & \dual{w}{z}+\dual{w}{f_2}+\kappa\dual{A^\frac{\beta}{2}w}{A^\frac{\beta}{2}z}+\kappa\dual{A^\frac{\beta}{2}w}{A^\frac{\beta}{2}f_2}-\dfrac{\gamma}{\alpha}i\lambda\|z\|^2 \\
\label{Eq06GevreyBeta1}
-& \dfrac{\gamma^2}{\alpha}\dual{z}{Aw}+\dfrac{\delta\gamma}{\alpha}\|A^\frac{\theta}{2}z\|^2-\dfrac{\gamma}{\alpha}\dual{z}{f_4}\\
\notag
+ & \dual{f_3}{v} +\kappa\dual{A^\frac{\beta}{2} f_3}{A^\frac{\beta}{2}v},
\end{align}
and
\begin{align}
\nonumber
\alpha\dual{A^2u} {A^{\theta-1}z}= & -i\dual{\frac{\lambda}{\sqrt{|\lambda|}}A^{\theta-1} w}{\sqrt{|\lambda|}z}-i\kappa\dual{\frac{\lambda}{\sqrt{|\lambda|}}A^\frac{\beta}{2}w}{\sqrt{|\lambda|}A^{\theta+\frac{\beta}{2}-1}z}\\
\label{Eq07GevreyBeta1}
- & \gamma\|A^\frac{\theta}{2} z\|^2+\dual{f_3}{A^{\theta-1}z}+\kappa\dual{A^\frac{\beta}{2}f_3}{A^{\theta+\frac{\beta}{2}-1}z}.
\end{align}
Now,  using \eqref{Eq06GevreyBeta1} and \eqref{Eq07GevreyBeta1} in \eqref{Eq05GevreyBeta1},  we get
 \begin{align*}
i\lambda\{\|w\|^2+\kappa\|A^\frac{\beta}{2}w\|^2\} =&
 \dfrac{\alpha}{\gamma}\dual{Af_1}{z}-\dfrac{\alpha}{\gamma}\dual{w}{z}-\dfrac{\alpha}{\gamma}\dual{w}{f_2}-\dfrac{\alpha\kappa}{\gamma}\dual{A^\beta w}{z}\\
- &\dfrac{\alpha\kappa}{\gamma}\dual{A^\frac{\beta}{2}w}{A^\frac{\beta}{2}f_2}+i\lambda\|z\|^2+\gamma\dual{z}{Aw}+\dual{z}{f_4} \\
\nonumber
 - & \delta\|A^\frac{\theta}{2}z\|^2-\dfrac{\alpha\kappa}{\gamma}\dual{A^\frac{\beta}{2} f_3}{A^\frac{\beta}{2}v}-\dfrac{\delta}{\gamma}\dual{f_3}{A^{\theta-1}z}\\
-&\dfrac{\alpha}{\gamma}\dual{f_3}{v}+ i\dfrac{\delta\kappa}{\gamma}\dual{\frac{\lambda}{\sqrt{|\lambda|}}A^\frac{\beta}{2}w}{\sqrt{|\lambda|}A^{\theta+\frac{\beta}{2}-1}z}\\
+ & \dfrac{\alpha}{\gamma} \dual{Au}{f_4}-\gamma\dual{z}{A w}+\dual{f_3}{w}+\kappa\dual{A^\frac{\beta}{2}f_3}{A^\frac{\beta}{2}w}\\
+ & \dfrac{\alpha}{\gamma} \dual{Aw }{z}+\delta\|A^\frac{\theta}{2} z\|^2 -\dfrac{\delta\kappa}{\gamma}\dual{A^\frac{\beta}{2}f_3}{A^{\theta+\frac{\beta}{2}-1}z}\\
+ & i\dfrac{\delta}{\gamma}\dual{\frac{\lambda}{\sqrt{|\lambda|}}A^{\theta-1} w}{\sqrt{|\lambda|}z}
\end{align*}
Then
\begin{align}
\nonumber
i\lambda\{\|w\|^2+\kappa\|A^\frac{\beta}{2}w\|^2\} =& \dfrac{\alpha}{\gamma} \dual{Aw }{z}+\dfrac{\alpha}{\gamma}\dual{Af_1}{z}-\dfrac{\alpha}{\gamma}\dual{w}{z}-\dfrac{\alpha}{\gamma}\dual{w}{f_2}\\
\nonumber
-& \dfrac{\alpha\kappa}{\gamma}\dual{A^\beta w}{z}-\dfrac{\alpha\kappa}{\gamma}\dual{A^\frac{\beta}{2}w}{A^\frac{\beta}{2}f_2}+\dual{z}{f_4}\\
 \nonumber
 + &i\dfrac{\delta}{\gamma}\dual{\frac{\lambda}{\sqrt{|\lambda|}}A^{\theta-1} w}{\sqrt{|\lambda|}z} -\dfrac{\delta}{\gamma}\dual{f_3}{A^{\theta-1}z} \\
\label{Eq08GevreyBeta1}
 & +\dual{f_3}{w}+i\dfrac{\delta\kappa}{\gamma}\dual{\frac{\lambda}{\sqrt{|\lambda|}}A^\frac{\beta}{2}w}{\sqrt{|\lambda|}A^{\theta+\frac{\beta}{2}-1}z}\\
 \nonumber
 - &\dfrac{\alpha\kappa}{\gamma}\dual{A^\frac{\beta}{2} f_3}{A^\frac{\beta}{2}v}+i\lambda\|z\|^2
 -\dfrac{\alpha}{\gamma}\dual{f_3}{v}+\dfrac{\alpha}{\gamma} \dual{Au}{f_4} \\
\nonumber
&-\dfrac{\delta\kappa}{\gamma}\dual{A^\frac{\beta}{2}f_3}{A^{\theta+\frac{\beta}{2}-1}z}+\kappa\dual{A^\frac{\beta}{2}f_3}{A^\frac{\beta}{2}w}.
\end{align}
As  
\begin{align*}
\frac{\alpha}{\gamma}\dual{Aw}{z}= & i\frac{\alpha}{\gamma}\dual{\frac{\lambda}{\sqrt{|\lambda|}}Au}{\sqrt{|\lambda|}z}-\frac{\alpha}{\gamma}\dual{Af_1}{z}, 
\end{align*} 
$-\gamma\dual{z}{Aw}=-i\gamma\dual{\sqrt{|\lambda|}z}{\frac{\lambda}{\sqrt{|\lambda|}}Au}+\gamma\dual{z}{Af_1}$  and 
similarly  $-\dfrac{\alpha\kappa}{\gamma}\dual{A^\beta w}{z}=-i\dfrac{\alpha\kappa}{\gamma}\dual{\frac{\lambda}{\sqrt{|\lambda|}}A^\beta u}{\sqrt{|\lambda|}z}+\dfrac{\alpha\kappa}{\gamma}\dual{A^\beta f_1}{z}$.
Taking the imaginary part,  using the norms $\|U\|_\mathcal{H}^2, \|F\|^2_\mathcal{H}$ and from  $\theta-1\leq\frac{\beta}{2}$,    for $\varepsilon>0$ exists $C_\varepsilon>0$,   such that
\begin{align}
\nonumber
|\lambda|\{\|w\|^2+\kappa\|A^\frac{\beta}{2}w\|^2\} \leq &  \varepsilon|\lambda|\|Au\|^2+\varepsilon|\lambda|\|A^\beta u\|^2 -\dfrac{\delta\kappa}{\gamma}{\rm Im}\dual{A^\frac{\beta}{2}f_3}{A^{\theta+\frac{\beta}{2}-1}z}\\
\nonumber
+ & C_\varepsilon|\lambda|\|z\|^2+
C\{\|w\|^2+\|z\|^2 \}+C_\tau\|F\|_\mathcal{H}\|U\|_\mathcal{H}\\
\label{Eq09GevreyBeta1}
 +& \dfrac{\delta\kappa}{\gamma}{\rm Re}\dual{\frac{\lambda}{\sqrt{|\lambda|}}A^\frac{\beta}{2}w}{\sqrt{|\lambda|}A^{\theta+\frac{\beta}{2}-1}z}  .
\end{align}
As $\frac{3\theta}{4}+\frac{\beta}{2}\leq 1$ in particular we will have   $\theta+\frac{\beta}{2}-1\leq\frac{\theta}{2}$, then  using \eqref{dis-10}  and from $\|F\|^2_\mathcal{H}$,  we have  $  |-\dfrac{\delta\kappa}{\gamma}{\rm Im}\dual{A^\frac{\beta}{2}f_3}{A^{\theta+\frac{\beta}{2}-1}z}|\leq C_\tau\{\|F\|_\mathcal{H}\|U\|_\mathcal{H}+\|F\|_\mathcal{H}^2\}$. 
Now, as 
 $\beta\leq 1$ using continuous embedding  and  considering   $\theta+\frac{\beta}{2}-1\leq \frac{\theta}{4}$  $\Longleftrightarrow \frac{3\theta}{4}+\frac{\beta}{2}\leq 1$,   for $\varepsilon>0$, exists  $C_\varepsilon>0$,  such that: for  $\frac{3\theta}{4}+\frac{\beta}{2}\leq 1$, we get
\begin{equation}\label{Alternativa000}
|\lambda|\|A^\frac{\beta}{2}w\|^2  \leq  C_\varepsilon |\lambda| \|A^\frac{\theta}{4}z\|^2+C_\tau\{\|F\|_\mathcal{H}\|U\|_\mathcal{H}+\|F\|^2_\mathcal{H}\} +\varepsilon|\lambda|\|Au\|^2.
\end{equation}
Furthermore,  from estimate \eqref{Eq001AnalyR1N}  for $0\leq\theta\leq 1$ and $0<\beta\leq 1$,  we have
\begin{eqnarray}\label{Alternativa001}
\varepsilon|\lambda|\|Au\|^2&\leq &\varepsilon |\lambda|\|A^\frac{\beta}{2}w\|^2+\varepsilon\|F\|_\mathcal{H} \|U\|_\mathcal{H} +C_\varepsilon|\lambda|\|z\|^2.
\end{eqnarray}
Using \eqref{Alternativa001}  in \eqref{Alternativa000},  we finish the proof of the present lemma.
\end{proof}

 The following lemma estimates the term 
 $|\lambda|\|A^\frac{\theta}{4}z\|^2 $.

\begin{lemma}\label{Lemma013}
Let 
$\tau > 0$. There exists a positive constant 
$C_\tau$ such that the solutions of 
Eqs. \eqref{ISplacas-20}--\eqref{ISplacas-40}, 
for 
$|\lambda| \geq \tau$,  
satisfy the following inequalities:
    \begin{equation}\label{Eq123AnalyRRG2}
 \hspace*{-1cm}   |\lambda|\|A^\frac{\theta}{4}z\|^2\leq C_\tau|\lambda|^\frac{2-\beta}{2+\theta-\beta}\{ \|F\|_\mathcal{H}\|U\|_\mathcal{H}+\|F\|^2_\mathcal{H}\}, 
    \end{equation}
    for \quad $0\leq\theta\leq 1$\quad and \quad $0<\beta\leq 1$.
\end{lemma}

\begin{proof}[\bf Proof.]
Henceforth, we assume $\lambda\in\R$ with  $|\lambda|>1$.   Now,  let us decompose $z$ as  $z=z_1+z_2$, where $z_1\in D(A)$ and $z_2\in D(A^0)$, with
    \begin{equation}\label{Eq110AnalyRRG2}
    i\lambda z_1+Az_1=f_4, \qquad i\lambda z_2=-\alpha Av+\gamma Aw-\delta A^\theta z+Az_1.
    \end{equation}
    Firstly,  applying the duality product  of the first equation in \eqref{Eq110AnalyRRG2} by $A^\frac{\theta}{2}z_1$, we have
    \begin{equation}\label{Eq111AnalyRRG2}
    i\lambda\|A^\frac{\theta}{4}z_1\|^2+\|A^\frac{2+\theta}{4}z_1\|^2=\dual{f_4}{A^\frac{\theta}{2}z_1}.
    \end{equation}
    Taking first the imaginary part of \eqref{Eq111AnalyRRG2} and in the sequence the real part and applying Cauchy-Schwarz inequality, we have
    \begin{multline}\label{Eq112AnalyRRG2}
   |\lambda|\|A^\frac{\theta}{4}z_1\|^2=|\rm{Im}\dual{f_4}{A^\frac{\theta}{2}z_1}|\leq C\|F\|_\mathcal{H}\|A^\frac{\theta}{2} z_1\|, \\
 {\rm and}\qquad   \|A^\frac{2+\theta}{4}z_1\|^2\leq C\|F\|_\mathcal{H}\|A^\frac{\theta}{2}z_1\|.
    \end{multline}
  From second equation in \eqref{Eq112AnalyRRG2},   for $\varepsilon>0$ exists  $C_\varepsilon>0$ such that
    \begin{equation}\label{Eq113AnalyRRG2}
    \|A^\frac{2+\theta}{4}z_1\|^2\leq C_\varepsilon\|F\|^2_\mathcal{H}
    +\varepsilon\|A^\frac{\theta}{2}z_1\|^2\;\Longleftrightarrow\;  \|A^\frac{2+\theta}{4}z_1\|\leq C_\varepsilon\|F\|_\mathcal{H}.
    \end{equation}
    As $\frac{\theta}{2}\leq\frac{2+\theta}{4}$,  applying  continuous embedding and estimate \eqref{Eq113AnalyRRG2} in first inequality of \eqref{Eq112AnalyRRG2}, we obtain
    \begin{equation}\label{Eq114AnalyRRG2}
     |\lambda|\|A^\frac{\theta}{4}z_1\|^2\leq C\|F\|^2_\mathcal{H}.
    \end{equation}   
    Equivalently, 
    \begin{multline}\label{Eq116AnalyRRG2}
   \|A^\frac{\theta}{4}z_1\|\leq C\dfrac{ \{\|F\|_\mathcal{H}\|U\|_\mathcal{H}+ \|F\|^2_\mathcal{H}\}^\frac{1}{2}}{|\lambda|^\frac{1}{2}},\\
   {\rm and}\qquad \|A^\frac{2+\theta}{4}z_1\|\leq C\{\|F\|_\mathcal{H}\|U\|_\mathcal{H}+ \|F\|^2_\mathcal{H}\}^\frac{1}{2}.
    \end{multline}
  In what follows from the second equation in \eqref{Eq110AnalyRRG2} that
    \begin{equation*}
    i\lambda A^\frac{\beta-2}{2}z_2=-\alpha A^\frac{\beta}{2}v+\gamma A^\frac{\beta}{2}w-\delta A^{\theta+\frac{\beta}{2}-1} z+ A^\frac{\beta}{2} z_1,
    \end{equation*}
   then,  from \eqref{ExponentialP1},  for   $|\lambda|>1$ and   since $\frac{\beta}{2}\leq \frac{1}{2},  \theta+\frac{\beta}{2}-1\leq\frac{\theta}{2}$ and $\frac{\beta}{2}\leq\frac{2+\theta}{4}$,  applying  continuous embedding and using first inequality of \eqref{Eq116AnalyRRG2},  estimate \eqref{dis-10} and second inequality of  \eqref{Eq116AnalyRRG2},   for $0\leq\theta\leq 1$ and $0<\beta\leq 1$,  we obtain
    \begin{equation}\label{Eq117AnalyRRG2}
    \|A^\frac{\beta-2}{2}z_2\|\leq C_\tau|\lambda|^{-1}\{\|F\|_\mathcal{H}\|U\|_\mathcal{H}+\|F\|^2_\mathcal{H}\}^\frac{1}{2}.
    \end{equation}
    On the  other hand, from $z_2=z-z_1$,  using \eqref{dis-10} and as $\frac{\theta}{2}\leq\frac{1}{2}$ applying  continuous embedding and the second  inequality of \eqref{Eq112AnalyRR},  we  have
    \begin{equation}  \label{Eq118AnalyRRG2}
    \|A^\frac{\theta}{2} z_2\| \leq  \|A^\frac{\theta}{2} z\|+\|A^\frac{\theta}{2}z_1\|
  \leq  C\{\|F\|_\mathcal{H}\|U\|_\mathcal{H}+\|F\|_\mathcal{H}^2\}^\frac{1}{2}.
    \end{equation}
    Now,  by Lions' interpolations inequality for  $\frac{\theta}{4}\in[\frac{\beta-2}{2} , \frac{\theta}{2}]$,  we derive
    \begin{equation}\label{Eq119AnalyRRG2}
    |\lambda|^\frac{1}{2} \|A^\frac{\theta}{4}z_2\|\leq C|\lambda|^\frac{1}{2}\|A^\frac{\beta-2}{2}z_2\|^\frac{\theta}{2(2+\theta-\beta)}\|A^\frac{\theta}{2}z_2\|^\frac{\theta-2\beta+4}{2(2+\theta-\beta)}.
    \end{equation}
    From \eqref{Eq117AnalyRRG2} and $|\lambda |\geq 1$,  we have
    \begin{equation}\label{Eq120AnalyRRG2}
    \|A^\frac{\beta-2}{2}z_2\|^\frac{\theta}{2(2+\theta-\beta)} \leq C_\tau |\lambda|^{-\frac{\theta}{2(2+\theta-\beta)}}\{\|F\|_\mathcal{H}\|U\|_\mathcal{H}+\|F\|^2_\mathcal{H}\}^\frac{\theta}{4(2+\theta-\beta)},
    \end{equation}
 and from \eqref{Eq118AnalyRRG2}   we have
    \begin{equation}\label{Eq121AnalyRRG2}
    \|A^\frac{\theta}{2} z_2\|^\frac{\theta+4}{2(2+\theta)}\leq C_\tau \{\|F\|_\mathcal{H}\|U\|_\mathcal{H}+\|F\|^2_\mathcal{H}\}^\frac{\theta+4}{4(2+\theta)}.
    \end{equation}
    Then, using \eqref{Eq120AnalyRRG2} and \eqref{Eq121AnalyRRG2} in \eqref{Eq119AnalyRRG2}, for $ |\lambda|>1$, we derive
    \begin{equation}\label{Eq122AnalyRRG2}
    |\lambda|^\frac{1}{2} \|A^\frac{\theta}{4}z_2\|\leq C_\tau |\lambda|^{\frac{2-\beta}{2(2+\theta-\beta)}}\{ \|F\|_\mathcal{H}\|U\|_\mathcal{H}+\|F\|^2_\mathcal{H}\}^\frac{1}{2}.
    \end{equation}
    Therefore, from $A^\frac{\theta}{4}z=A^\frac{\theta}{4}z_1+A^\frac{\theta}{4}z_2$ and  first inequality of  \eqref{Eq116AnalyRRG2} and \eqref{Eq121AnalyRRG2}, we complete the proof of this lemma.
\end{proof}

\begin{lemma}\label{Lemma014}
Let $\tau > 0$. There exists a positive constant $C_\tau$ such that the solutions of Eqs. \eqref{ISplacas-20}--\eqref{ISplacas-40}, for $|\lambda| \geq \tau$, satisfy the following inequalities:
\begin{equation} \label{Eq124AnalyRRG2}
|\lambda|\|Au\|^2\leq  C_\tau|\lambda|^\frac{2-\beta}{2+\theta-\beta}\{\|F\|_\mathcal{H} \|U\|_\mathcal{H} +\|F\|^2_\mathcal{H}\}\quad {\rm for} \quad \dfrac{3\theta}{4}+\dfrac{\beta}{2}\leq 1.
\end{equation}
\end{lemma}

\begin{proof}[\bf Proof.]
For $0\leq\theta\leq 1$ and $0<\beta\leq 1$,   applying Lemmas  \eqref{Lemma13A} and \eqref{Lemma013}  in   \eqref{Eq001AnalyR1N},  we finish the proof of the present lemma.
\end{proof}

\begin{lemma}\label{Lemma015}
Let $\tau > 0$. There exists a positive constant $C_\tau$ such that the solutions of equations  \eqref{ISplacas-20}--\eqref{ISplacas-40},  for $|\lambda| \geq \tau$, satisfy the following inequalities:
\begin{equation} \label{Eq125AnalyRRG2}
|\lambda|\|A^\frac{1}{2}v\|^2\leq  C_\tau|\lambda|^\frac{2-\beta}{2+\theta-\beta}\{\|F\|_\mathcal{H} \|U\|_\mathcal{H} +\|F\|^2_\mathcal{H}\}\quad {\rm for} \quad \dfrac{3\theta}{4}+\dfrac{\beta}{2}\leq 1.
\end{equation}
\end{lemma}

\begin{proof}[\bf Proof.]
Taking the product of duality between the equation \eqref{esp-40}  and $z$,  and using estimates  \eqref{esp-10}, \eqref{esp-20} and \eqref{esp-30}, we have
\begin{align*}
i\alpha\lambda\|A^\frac{1}{2} v\|^2  = &-\alpha\dual{ A^\frac{1}{2}v}{A^\frac{1}{2} f_2}+i\lambda\|z\|^2-\dual{ w}{\gamma Az}+\delta\|A^\frac{\theta}{2} z\|^2-\dual{f_4}{ z}\\
&= -\alpha\dual{  A^\frac{1}{2}v}{A^\frac{1}{2} f_2}+i\lambda\|z\|^2-\dual{ w}{-i\lambda(I+\kappa A^\beta)w}\\
&  -\dual{w}{-\alpha A^2 u}-\dual{ w}{(I+\kappa A^\beta)f_3}+\delta\|A^\frac{\theta}{2}z\|^2-\dual{f_4}{ z}\\
& =  i\lambda[\|z\|^2-\|w\|^2-\kappa \|A^\frac{\beta}{2} w\|^2+\alpha\|Au\|^2]-\alpha\dual{Af_1}{ Au}\\
&  -\alpha\dual{ A^\frac{1}{2}v}{A^\frac{1}{2} f_2}-\dual{ w}{(I+\kappa A^\beta)f_3}+\delta\|A^\frac{\theta}{2}z\|^2-\dual{f_4}{ z}.
\end{align*}
Then,  taking imaginary part,   applying   Cauchy-Schwartz, Young inequalities, continuous embedding and estimative \eqref{dis-10A},   for $\varepsilon>0$, exist $C_\varepsilon$,  such that
\begin{equation*}
|\lambda|\|A^\frac{1}{2} v\|^2  \leq C_\tau |\lambda|\{\|z\|^2+\|Au\|^2\}+ C_\tau\{\|F\|_\mathcal{H}\|U\|_\mathcal{H}\}, 
\end{equation*} for\quad $0\leq\theta\leq 1$ and  $0<\beta\leq 1$.

From $0\leq\frac{\theta}{4}$,   using continuous embedding and Lemmas \eqref{Lemma013} and \eqref{Lemma014},   we finish the proof of the present lemma.
\end{proof}
Therefore,   from Lemmas \eqref{Lemma13A}, \eqref{Lemma013},  \eqref{Lemma014} and \eqref{Lemma015},  we arrive at
\begin{equation}\label{GevreyRG2}
|\lambda|\|U\|_\mathcal{H}^2\leq C_\tau|\lambda|^\frac{2-\beta}{2+\theta-\beta}\{\|F\|_\mathcal{H}\|U\|_\mathcal{H}+\|F\|_\mathcal{H}^2\}\quad{\rm for}\quad \dfrac{3\theta}{4}+\dfrac{\beta}{2}\leq 1.
\end{equation}
Equivalently,   for $\varepsilon>0$ exists $C_\varepsilon>0$ such that
\begin{equation*}
|\lambda|^\frac{\theta}{2+\theta-\beta}\|U\|^2_\mathcal{H}\leq \varepsilon\|U\|^2_\mathcal{H}+C_\varepsilon\|F\|^2_\mathcal{H}\qquad{\rm for}\qquad \dfrac{3\theta}{4}+\dfrac{\beta}{2}\leq 1.
\end{equation*}
Then,   for $|\lambda|>1$,  we have
\begin{multline}\label{GevreyRG2B}
|\lambda|^\frac{\theta}{2(2+\theta-\beta)}\|(i\lambda I-\mathbb{B})^{-1}\|_{\mathcal{L}(\mathcal{H})}\leq |\lambda|^\frac{\theta}{2(2+\theta-\beta)}\dfrac{\|U\|_\mathcal{H}}{\|F\|_\mathcal{H}}\\
\leq C_\tau\quad{\rm for}\quad \dfrac{3\theta}{4}+\dfrac{\beta}{2}\leq 1.
\end{multline}
For $\phi_2=\frac{\theta}{2(2+\theta-\beta)}$.  Applying the upper limit as $|\lambda|\to\infty$ in \eqref{GevreyRG2B} completes the proof of this theorem.
\end{proof}

\subsection{Analyticity  of $S(t)=e^{\mathbb{B}t}$ for $(\theta,\beta)=(\frac{1}{2},1)$}
\label{3.3}
To study the analyticity of $S(t)=e^{\mathbb{B}t}$ we will use a characterization given in the book by Liu-Zheng \cite{LiuZ}, Theorem \ref{LiuZAnaliticity}.
From stationary system $(i\lambda I- \mathbb{B})U = F$,  for $(\theta,\beta)=(\frac{1}{2},1)$ in \eqref{esp-10}-\eqref{esp-40},  we have
\begin{eqnarray}
i\lambda u-w &=& f_1\label{esp-10An}\\
i\lambda v-z &=& f_2\label{esp-20An}\\
i\lambda (I+\kappa A) w+ \alpha A^2 u+\gamma Az &=&(I+\kappa A)f_3\label{esp-30An}\\
i\lambda  z+\alpha A v-\gamma Aw+\delta A^\frac{1}{2} z&=& f_4.\label{esp-40An}
\end{eqnarray}
And  from \eqref{dis-10} for $\theta=\frac{1}{2}$,  we have
\begin{eqnarray}\label{dis-10An}
\delta\|A^{\frac{1}{4}}z\|^2=\text{Re}\dual{(i\lambda -\mathbb{B})U}{U}_\mathcal{H}=\text{Re}\dual{F}{U}_\mathcal{H}\leq \|F\|_\mathcal{H}\|U\|_\mathcal{H}.
\end{eqnarray}

 \smallskip
 
 \noindent
{\bf Estimate of $\|A^\frac{3-\beta}{2} u\|$.}
Taking the duality product between equation 
\eqref{esp-30} and $A^{1-\beta}u$,  
using 
\eqref{esp-10} and taking advantage of the 
self-adjointness of the powers of the operator 
$A$, we obtain
\begin{align*}
\alpha \|A^\frac{3-\beta}{2}u\|^2 = & \|A^\frac{1-\beta}{2}w\|^2+\kappa\|A^\frac{1}{2}w\|^2-\gamma\dual{A^\frac{\theta}{2}z}{A^{2-\beta-\frac{\theta}{2}} u}\\
& +\dual{f_3}{A^{1-\beta}u} +\kappa\dual{A^\frac{\beta}{2}f_3}{A^{1-\frac{\beta}{2}}u},
\end{align*}
from $\frac{1-\beta}{2}\leq\frac{1}{2}$ and  $2-\beta-\frac{\theta}{2}\leq \frac{3-\beta}{2}$,  Lemma\eqref{Lemma01Gevrey}, estimate \eqref{dis-10} and continuous embedding, we have
\begin{equation}\label{Analitico007}
\|A^\frac{3-\beta}{2} u\|^2\leq C_\tau\|F\|_\mathcal{H}\|U\|_\mathcal{H}, \qquad {\rm for}\quad 2\leq 2\beta+\theta.
\end{equation}

\begin{lemma}\label{Lemma016}
Let $1\geq\theta\geq\frac{1}{2}$ and $\tau>0$.  There exists a constant   $C_\tau>0$ such that the solutions of \eqref{esp-10An}--\eqref{esp-40An}
for $|\lambda|>\tau$ satisfy the inequality
\begin{equation}\label{Analitico008B}
|\lambda|\|A^\frac{1}{2} v\|^2 \leq 
C_\tau \|F\|_\mathcal{H}\|U\|_\mathcal{H}
\quad{\rm for}\quad 2\leq 2\beta+\theta, \quad{\rm and}\quad 1\leq 3\theta-\beta.
\end{equation}
\end{lemma}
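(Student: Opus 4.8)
The plan is to reproduce the \emph{sum of two duality products} identity that already produced \eqref{Eq001AnalyR}--\eqref{Eq003AnalyR}, but now applied to the stationary system \eqref{esp-10An}--\eqref{esp-40An}. Concretely, I would pair equation \eqref{esp-30An} with $w$, using \eqref{esp-10An} to turn $i\lambda u$ into $w+f_1$, and pair equation \eqref{esp-40An} with $z$, using \eqref{esp-20An}. Adding the two resulting identities and taking imaginary parts, the coupling contributions $-\gamma\langle A^{\frac12}z,A^{\frac12}w\rangle$ and $-\gamma\langle A^{\frac12}w,A^{\frac12}z\rangle$ cancel, since $\mathrm{Im}\,\xi+\mathrm{Im}\,\bar\xi=0$, and one is left with
\[
\lambda\alpha\|A^{\frac12}v\|^2+\lambda\|w\|^2+\lambda\kappa\|A^{\frac\beta2}w\|^2=\lambda\alpha\|Au\|^2+\lambda\|z\|^2+\mathrm{Im}\{\mathcal{R}\},
\]
where $\mathcal{R}$ gathers exactly those terms carrying a factor $f_1,\dots,f_4$. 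The key structural observation is that the two $w$-terms on the left are nonnegative, so after passing to $|\lambda|$ they may simply be discarded.

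Next I would estimate the remainder $\mathcal{R}$ by $C\|F\|_\mathcal{H}\|U\|_\mathcal{H}$: each summand is handled by Cauchy--Schwarz and Young together with the embeddings $f_1\in D(A)$, $f_2\in D(A^{\frac12})$, $f_3\in D(A^{\frac\beta2})$ and $f_4\in D(A^0)$, exactly as in the proof of Proposition \eqref{limsup}. Discarding the favorable $w$-terms then reduces the whole lemma to the two $|\lambda|$-weighted bounds
\[
|\lambda|\,\|Au\|^2\le C_\tau\|F\|_\mathcal{H}\|U\|_\mathcal{H}\qquad\text{and}\qquad |\lambda|\,\|z\|^2\le C_\tau\|F\|_\mathcal{H}\|U\|_\mathcal{H}.
\]
For $Au$ I would not argue directly but invoke \eqref{Eq001AnalyR1N}, which already controls $|\lambda|\|Au\|^2$ by $C|\lambda|\|A^{\frac\beta2}w\|^2+C_\varepsilon|\lambda|\|z\|^2+C_\tau\|F\|_\mathcal{H}\|U\|_\mathcal{H}$, so that the $Au$-term reduces to $w$ and $z$. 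For $z$ I would use the splitting $z=z_1+z_2$ of \eqref{Eq110AnalyRRG2}, with $i\lambda z_1+Az_1=f_4$; the resolvent identity for $z_1$ gives $|\lambda|\|z_1\|+\|A^{\frac12}z_1\|\le C\|F\|_\mathcal{H}$, while applying $A^{-\frac12}$ to the $z_2$-equation and inserting the controlled quantities $\|A^{\frac12}v\|$, $\|A^{\frac12}w\|$ (from Lemma \eqref{Lemma01Gevrey}, available since $2\le\theta+2\beta$) and $\|A^{\frac\theta2}z\|$ (from \eqref{dis-10An}) yields $\|A^{-\frac12}z_2\|\le C|\lambda|^{-1}\{\|F\|_\mathcal{H}\|U\|_\mathcal{H}\}^{\frac12}$. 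A Lions interpolation between $A^{-\frac12}z_2$ and $A^{\frac\theta2}z_2$ should then promote this to the full $|\lambda|$-weight for $z$, and the analogous decomposition $w=w_1+w_2$ does the same for $\|A^{\frac\beta2}w\|$.

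The hard part, in my expectation, is precisely this last interpolation bookkeeping. The two hypotheses of the lemma are exactly what force the interpolation exponents to reach the integer weight $1$ instead of a fractional (Gevrey) weight: $2\le 2\beta+\theta$ is what makes \eqref{Analitico007} and Lemma \eqref{Lemma01Gevrey} applicable, so that $\|A^{\frac12}w\|$ and the high-regularity norm $\|A^{\frac{3-\beta}{2}}u\|$ are controlled, whereas $1\le 3\theta-\beta$ is the threshold that places the interpolation indices for $z_2$ and $w_2$ at weight $1$; away from this region one recovers only the Gevrey estimates of Subsubsections \eqref{3.2.1}--\eqref{3.2.2}. By contrast, the algebraic identity above and the control of the $F$-remainder are routine, so the delicate point is verifying that the negative $w$-contributions can be dropped and that the interpolation powers for $z$ (and for $w$, via \eqref{Eq001AnalyR1N}) collapse to the full resolvent weight under these two constraints.
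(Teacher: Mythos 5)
Your opening reduction is algebraically sound (the identity obtained by pairing \eqref{esp-30An} with $w$ and \eqref{esp-40An} with $z$ is exactly \eqref{Eq003AnalyR}, and discarding the nonnegative $w$-terms is legitimate), but the step that carries all the weight --- promoting the decomposition-plus-interpolation bounds to the full weight $|\lambda|$ --- fails, and it fails structurally, not as a matter of bookkeeping. From the splitting $i\lambda z_2=-\alpha Av+\gamma Aw-\delta A^{\theta}z+Az_1$ you can indeed get $\|A^{-1/2}z_2\|\le C|\lambda|^{-1}\{\|F\|_\mathcal{H}\|U\|_\mathcal{H}+\|F\|_\mathcal{H}^2\}^{1/2}$ and $\|A^{\theta/2}z_2\|\le C\{\cdots\}^{1/2}$, but Lions' interpolation then gives
\[
\|z_2\|\le C\|A^{-1/2}z_2\|^{\frac{\theta}{1+\theta}}\,\|A^{\theta/2}z_2\|^{\frac{1}{1+\theta}}\le C|\lambda|^{-\frac{\theta}{1+\theta}}\{\cdots\}^{1/2},
\]
i.e.\ the weight $|\lambda|^{\frac{2\theta}{1+\theta}}$ on $\|z_2\|^2$, which reaches the needed weight $|\lambda|^{1}$ only when $\theta\ge 1$; at the point where the lemma is actually used, $(\theta,\beta)=(\tfrac12,1)$, you only get $|\lambda|^{2/3}$. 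For $w$ it is worse: interpolating $A^{\beta/2}w_2$ between $A^{\frac{\beta}{2}-1}w_2$ (weight $|\lambda|^{-1}$) and $A^{1/2}w_2$ (weight $|\lambda|^{0}$) yields the exponent $\tfrac{2(1-\beta)}{3-\beta}$ on $\|A^{\beta/2}w_2\|^2$, which is $0$ at $\beta=1$. These fractional exponents are precisely the Gevrey exponents of Subsubsection \ref{3.2.1}; the entire point of Subsection \ref{3.3} is that this interpolation machinery cannot produce analyticity, so no choice of hypotheses makes it ``collapse to the full resolvent weight.'' Your reading of $1\le 3\theta-\beta$ is also incorrect: it is not an interpolation threshold. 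Finally, there is a circularity: in the paper the full-weight bounds \eqref{An005}--\eqref{An007} on $w$, $Au$ and $z$ are proved \emph{after}, and \emph{by means of}, Lemma \ref{Lemma016}, so deriving the lemma from those bounds runs the logic backwards.

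The paper's proof uses none of this. It pairs equation \eqref{esp-40} with the single multiplier $A^{1-\theta}v$: the damping term becomes $\delta\dual{A^{\theta}z}{A^{1-\theta}v}$, and substituting $z=i\lambda v-f_2$ from \eqref{esp-20} turns it into $i\delta\lambda\|A^{1/2}v\|^2-\delta\dual{A^{1/2}f_2}{A^{1/2}v}$ --- the full $\lambda$-weight is extracted algebraically from the dissipation, with no interpolation and no decomposition. Taking imaginary parts kills the real terms $\alpha\|A^{\frac{2-\theta}{2}}v\|^2$ and $\|A^{\frac{1-\theta}{2}}z\|^2$, and the only delicate contribution is the coupling term, rewritten as $-\gamma\dual{A^{\frac{3-\beta}{2}}u}{A^{\frac{\beta+1-2\theta}{2}}z}$; it is absorbed by Young's inequality using the high-regularity estimate \eqref{Analitico007} (this is where $2\le 2\beta+\theta$ enters, via Lemma \ref{Lemma01Gevrey}) together with the embedding $D(A^{\theta/2})\hookrightarrow D(A^{\frac{\beta+1-2\theta}{2}})$, which is exactly the condition $1\le 3\theta-\beta$, followed by \eqref{dis-10}; the hypothesis $\theta\ge\tfrac12$ is the embedding $1-\theta\le\tfrac12$ needed for the data terms $\dual{f_4}{A^{1-\theta}v}$ and $\dual{z}{A^{1-\theta}f_2}$. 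If you want to salvage your plan, you must replace the interpolation step by a device that produces the weight $|\lambda|$ exactly; the multiplier $A^{1-\theta}v$ is that device.
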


\begin{proof}[\bf Proof.]
Taking the duality product between equation \eqref{esp-40} and $ A^{1-\theta}v$,    using the equation \eqref{esp-20} and   taking advantage of the self-adjointness of the powers of the operator $A$,  we obtain
\begin{multline*}
\delta\dual{A^\theta(i\lambda v-f_2)}{A^{1-\theta}v} =
\dual{ A^{1-\theta}z}{i\lambda v}-\alpha \|A^\frac{2-\theta}{2}v\|^2\\
+\gamma\dual{A(i\lambda u+f_1)}{ A^{1-\theta}v}
+\dual{f_4}{ A^{1-\theta}v},
\end{multline*}
then
\begin{align*}
i\delta\lambda\|A^\frac{1}{2}v\|^2 = & \delta\dual{A^\frac{1}{2}f_2}{ A^\frac{1}{2}v}+ \|A^\frac{1-\theta}{2}z\|^2+\dual{ z}{A^{1-\theta}f_2}
-\alpha \|A^\frac{2-\theta}{2}v\|^2\\
& -\gamma\dual{Au}{A^{1-\theta}(i\lambda v)} -\gamma\dual{Af_1}{ A^{1-\theta}v}+\dual{f_4}{ A^{1-\theta}v}\\
=& \delta\dual{A^\frac{1}{2}f_2}{A^\frac{1}{2}v}+ \|A^\frac{1-\theta}{2}z\|^2+\dual{z}{A^{1-\theta}f_2}
-\alpha \|A^\frac{2-\theta}{2}v\|^2\\
& -\gamma\dual{A^\frac{3-\beta}{2}u}{A^\frac{\beta+1-2\theta}{2}z}-\gamma\dual{Au}{A^{1-\theta}f_2}-\gamma\dual{Af_1}{A^{1-\theta}v}\\
&  +\dual{f_4}{ A^{1-\theta}v}.
\end{align*}
Taking imaginary part,  from  norms $\|F\|_\mathcal{H},\|U\|_\mathcal{H}$ and considering $\frac{\beta+1-2\theta}{2}\leq \frac{\theta}{2}\Longleftrightarrow 1\leq 3\theta-\beta$ and $1-\theta\leq\frac{1}{2}\Longleftrightarrow \frac{1}{2}\leq \theta$, using continuous embedding,  we get
\begin{multline*}
|\lambda|\|A^\frac{1}{2}v\|^2 \leq   C\{\|A^\frac{3-\beta}{2}u\|^2+ \|A^\frac{\theta}{2}z\|^2+\|A^\frac{1}{2} f_2\|\|A^\frac{1}{2}v\|+\|z\|\|A^\frac{1}{2}f_2\| \\
 +\|Au\|\|A^\frac{1}{2}f_2\|+\|Af_1\|\|A^\frac{1}{2}v\|  +\|f_4\|\|A^\frac{1}{2}v\|\}.
\end{multline*}
Therefore, using estimates 
\eqref{dis-10},  \eqref{Analitico007}, 
norm  
$\|F\|_\mathcal{H}$ and $\|U\|_\mathcal{H}$,  
we finish to prove this lemma.
\end{proof}

Applying the duality product between \eqref{esp-30An} and $w$,   using the self-adjointness of the powers of the operator $A$, we obtain   
\begin{multline}\label{An001}
i\lambda\{\|w\|^2+\kappa\|A^\frac{1}{2}w\|^2\}  = -\alpha\dual{A^2u}{w}
-\gamma\dual{A^\frac{1}{2}z}{A^\frac{1}{2}w}\\+\dual{f_3}{w}+\kappa\dual{A^\frac{1}{2}f_3}{A^\frac{1}{2}w}.
\end{multline}
Now applying the duality product between \eqref{esp-40An} and $\frac{\alpha}{\gamma} Au$,   using the self-adjointness of the powers of the operator $A$,  from \eqref{esp-10An},  we obtain   
\begin{multline}\label{An002}
i\lambda\dfrac{\alpha}{\gamma}\dual{z}{Au}=-\dfrac{\alpha^2}{\gamma}\dual{Av}{Au}+\alpha\dual{w}{A^2u}-\dfrac{\delta\alpha}{\gamma}\dual{A^\frac{1}{2}z}{Au}\\+\dfrac{\alpha}{\gamma}\dual{f_4}{Au}.
\end{multline}
From identity Im$\{ -\alpha\dual{A^2u}{w}-\alpha\dual{w}{A^2u}\}=0$,   making the subtraction between  equations  \eqref{An001} and \eqref{An002} and taking imaginary part,  we have
\begin{align}
\nonumber
\lambda \{\|w\|^2+\kappa\|A^\frac{1}{2}w\|^2\} = & {\rm Im} \{-\gamma\dual{A^\frac{1}{2}z}{A^\frac{1}{2}w}+\dual{f_3}{w}+\kappa\dual{A^\frac{1}{2}f_3}{A^\frac{1}{2}w}\\
\label{An003}
&-\dfrac{\alpha}{\gamma}\dual{f_4}{Au}-\dfrac{\alpha}{\gamma}\dual{A^\frac{1}{2}z}{A^\frac{1}{2}(i\lambda u)}\\
\notag
& +\dfrac{\alpha}{\gamma}\dual{v}{\alpha A^2u}+\dfrac{\delta\alpha}{\gamma}\dual{A^\frac{1}{2}z}{Au}\}.
\end{align}
In addition, from  equation \eqref{esp-30An},  we have
\begin{align}
\nonumber
\dfrac{\alpha}{\gamma}\dual{v}{\alpha A^2u} = & \dfrac{\alpha}{\gamma}\dual{v}{-i\lambda(I+\kappa A)w-\gamma Az+(I+\kappa A)f_3}\\
\label{An004}
& = \dfrac{\alpha}{\gamma}\dual{z+f_2}{w}+\dfrac{\alpha\kappa}{\gamma}\dual{A^\frac{1}{2}(z+f_2)}{A^\frac{1}{2}w}+\dfrac{\alpha}{\gamma}\dual{v}{f_3}\\
\nonumber &  -\alpha\dual{A^\frac{1}{2}v}{A^\frac{1}{2}z}+\dfrac{\alpha\kappa}{\gamma}\dual{A^\frac{1}{2}v}{A^\frac{1}{2}f_3}.
\end{align}
Using identity \eqref{An004} 
in \eqref{An003} we have
\begin{align}
\nonumber
\lambda \{\|w\|^2+\kappa\|A^\frac{1}{2}w\|^2\} = & {\rm Im}\bigg\{ \dfrac{\alpha\kappa-\alpha-\gamma^2}{\gamma}\dual{A^\frac{1}{2}z}{A^\frac{1}{2}w}+\kappa\dual{A^\frac{1}{2}f_3}{A^\frac{1}{2}w}\\
\nonumber
&  +\dual{f_3}{w}+\dfrac{\alpha}{\gamma}\dual{f_2}{w}-\dfrac{\alpha}{\gamma}\dual{z}{Af_1}+\dfrac{\alpha}{\gamma}\dual{z}{w}\\
\nonumber
&+\dfrac{\kappa\alpha}{\gamma}\dual{A^\frac{1}{2} f_2}{A^\frac{1}{2}w} -\alpha\dual{A^\frac{1}{2} v}{A^\frac{1}{2}(
i\lambda v-f_2)} \\
\nonumber
& +\dfrac{\alpha}{\gamma}\dual{v}{f_3} +\dfrac{\alpha\kappa}{\gamma}\dual{A^\frac{1}{2}v}{A^\frac{1}{2}f_3}+\dfrac{\delta\alpha}{\gamma}\dual{A^\frac{1}{2}z}{Au}\\
\label{An004B}
& -\dfrac{\alpha}{\gamma}\dual{f_4}{Au}\bigg\}.
\end{align}
For $\varepsilon>0$, there exists $C_\varepsilon>0$ such that $|\dual{A^\frac{1}{2}z}{A^\frac{1}{2}w}|\leq\varepsilon\|A^\frac{1}{2}w\|^2+C_\varepsilon\|A^\frac{1}{2}z\|^2$.   Applying   Cauchy-Schwarz and Young inequalities,  Lemma \eqref{Lemma016} for $(\theta,\beta)=(\frac{1}{2},1)$ and estimate \eqref{dis-10An}  in \eqref{An004B}, we have
\begin{equation}\label{An005}
|\lambda|\{\|w\|^2+\kappa\|A^\frac{1}{2}w\|^2\}\leq C_\tau\|F\|_\mathcal{H}\|U\|_\mathcal{H}\qquad {\rm for}\quad (\theta,\beta)=(\frac{1}{2},1).
\end{equation}

 \smallskip
 
 \noindent
{\bf Estimate of $\alpha|\lambda|\|Au\|^2$.}
Using \eqref{esp-10An} in \eqref{An001},  
we have
\begin{align*}
i\alpha\lambda\|Au\|^2 = & -\alpha\dual{Au}{Af_1}+i\lambda \{\|w\|^2+\kappa\|A^\frac{1}{2}w\|^2\}+\gamma\dual{A^\frac{1}{2}z}{A^\frac{1}{2}w} \\
& -\dual{f_3}{w}-\kappa\dual{A^\frac{1}{2}f_3}{A^\frac{1}{2}w}.
\end{align*}
Taking imaginary part,  using identity \eqref{An005}  and applying Cauchy-Schwarz and Young inequalities and estimates \eqref{dis-10An},   we have
\begin{equation}\label{An006}
\alpha|\lambda|\|Au\|^2\leq C_\tau\|F\|_\mathcal{H}\|U\|_\mathcal{H}.
\end{equation}

 \smallskip
 
 \noindent
{\bf Estimate of $|\lambda|\|z\|^2$.}
Applying the duality product between 
\eqref{esp-40An} and $z$, using the self-adjointness 
of the powers of the operator A,  we have
\begin{align*}
i\lambda\|z\|^2  = & -\alpha\dual{A^\frac{1}{2}v}{A^\frac{1}{2}z}+\gamma\dual{A^\frac{1}{2}w}{A^\frac{1}{2}z} -\delta\|A^\frac{1}{4}z\|^2+\dual{f_4}{z}\\
 =& i\alpha\lambda\|A^\frac{1}{2}v\|^2+\alpha\dual{A^\frac{1}{2}v}{A^\frac{1}{2}f_2}-i\gamma\dual{\sqrt{|\lambda|}A^\frac{1}{2}w}{\dfrac{\lambda}{\sqrt{|\lambda|}}A^\frac{1}{2} v} \\
 &-\gamma\dual{A^\frac{1}{2} w}{A^\frac{1}{2} f_2} -\delta\|A^\frac{1}{4}z\|^2+\dual{f_4}{z}.
\end{align*}
Taking imaginary part,   applying Cauchy-Schwarz and Young inequalities and estimates of $\|U\|_\mathcal{H}^2 $  for $(\theta,\beta)=(\frac{1}{2},1)$ and norms $\|F\|_\mathcal{H}$ and $\|U\|_\mathcal{H}$,  for $\varepsilon>0$ exists $C_\varepsilon>0$ constant that not depend on $\lambda$,  such that
\begin{equation}\label{An007}
|\lambda|\|z\|^2\leq C_\varepsilon\|F\|_\mathcal{H}\|U\|_\mathcal{H}.
\end{equation}
Finally,  from Lemma \eqref{Lemma016} and   estimates \eqref{An005}--\eqref{An007}, we have
\begin{equation}\label{An008}
|\lambda|\|U\|_\mathcal{H}^2\leq C_\varepsilon\|F\|_\mathcal{H}\|U\|_\mathcal{H}\qquad{\rm for}\qquad (\theta,\beta)=\big(\frac{1}{2},1\big).
\end{equation}

\begin{remark}[Asymptotic Behavior]\label{Remark18}
\rm
As mentioned in the introduction, the results of the analyticity class or Gevrey class of a semigroup $S(t)=e^{\mathbb{B}t}$ imply in the exponential decay of this semigroup. We will illustrate this using the following theorem of the spectral characterization of exponential stability of semigroups due to the research of Gearhart\cite{Gearhart}, Pr\"us \cite{Pruss} that the reader may to the    book of Liu and Zheng\cite{LiuZ} (Theorem 1.3.2).  
\begin{theorem}[see \cite{LiuZ}]\label{LiuZExponential}
Let $S(t)=e^{\mathbb{B}t}$ be  a  $C_0$-semigroup of contractions on  a Hilbert space. Then $S(t)$ is exponentially stable if and only if
    \begin{equation}\label{EImaginario}
\rho(\mathbb{B})\supseteq\{ i\lambda/ \lambda\in \R \}  \equiv i\R \qquad {\rm and}
\end{equation}
\begin{equation}\label{Exponential}
 \limsup\limits_{|\lambda|\to
   \infty}   \|(i\lambda I-\mathbb{B})^{-1}\|_{\mathcal{L}(\mathcal{H})}<\infty
\end{equation}
holds.
\end{theorem}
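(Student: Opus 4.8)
The plan is to establish the classical Gearhart--Pr\"uss--Huang characterization, treating the two implications separately, with the sufficiency direction carrying all the difficulty. For the necessity, I would assume $\|S(t)\|_{\mathcal{L}(\mathcal{H})}\le Me^{-\omega t}$ with $\omega>0$ and exploit the Laplace representation of the resolvent: for $\mathrm{Re}\,\lambda>-\omega$ one has $(\lambda I-\mathbb{B})^{-1}=\int_0^\infty e^{-\lambda t}S(t)\,dt$. This integral converges in operator norm precisely because of the exponential decay, which immediately places the imaginary axis inside $\rho(\mathbb{B})$ --- giving \eqref{EImaginario} --- and yields $\|(i\lambda I-\mathbb{B})^{-1}\|\le\int_0^\infty\|S(t)\|\,dt\le M/\omega$ uniformly in $\lambda$, which is \eqref{Exponential}.

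For the sufficiency, I would first upgrade the uniform bound on the imaginary axis to a uniform bound on the whole closed right half-plane. Writing $M_0=\sup_{\mu\in\R}\|(i\mu I-\mathbb{B})^{-1}\|$ and factoring $\lambda I-\mathbb{B}=(i\mu I-\mathbb{B})(I+\sigma(i\mu I-\mathbb{B})^{-1})$ for $\lambda=\sigma+i\mu$, a Neumann series converges whenever $\sigma M_0<1$; combined with the contraction estimate $\|(\lambda I-\mathbb{B})^{-1}\|\le 1/\sigma$ valid for $\sigma>0$, this produces a finite $M:=\sup_{\mathrm{Re}\,\lambda\ge 0}\|(\lambda I-\mathbb{B})^{-1}\|$. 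The heart of the argument is then a Plancherel estimate: for fixed $x\in\mathcal{H}$ and $\epsilon>0$, the $\mathcal{H}$-valued function $t\mapsto e^{-\epsilon t}S(t)x$ (extended by zero for $t<0$) lies in $L^2(\R;\mathcal{H})$ with Fourier transform $((\epsilon+i\mu)I-\mathbb{B})^{-1}x$, so Plancherel gives
\begin{equation*}
2\pi\int_0^\infty e^{-2\epsilon t}\|S(t)x\|^2\,dt=\int_{-\infty}^\infty\|((\epsilon+i\mu)I-\mathbb{B})^{-1}x\|^2\,d\mu.
\end{equation*}

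The main obstacle is that the pointwise bound $\|((\epsilon+i\mu)I-\mathbb{B})^{-1}x\|\le M\|x\|$ does not by itself make the right-hand integral finite, so the uniform-in-$\epsilon$ control must come from elsewhere; this is exactly the step where the Hilbert-space structure is indispensable, since the statement fails on general Banach spaces. I would obtain it by transferring to a fixed line: fixing $\epsilon_0>0$ and using the resolvent identity $((\epsilon+i\mu)I-\mathbb{B})^{-1}x=((\epsilon_0+i\mu)I-\mathbb{B})^{-1}x+(\epsilon_0-\epsilon)((\epsilon+i\mu)I-\mathbb{B})^{-1}((\epsilon_0+i\mu)I-\mathbb{B})^{-1}x$ together with the half-plane bound $M$, which gives $\|((\epsilon+i\mu)I-\mathbb{B})^{-1}x\|\le(1+\epsilon_0 M)\|((\epsilon_0+i\mu)I-\mathbb{B})^{-1}x\|$. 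Integrating and using the crude contraction bound $\int_{-\infty}^\infty\|((\epsilon_0+i\mu)I-\mathbb{B})^{-1}x\|^2\,d\mu=2\pi\int_0^\infty e^{-2\epsilon_0 t}\|S(t)x\|^2\,dt\le\pi\|x\|^2/\epsilon_0$ then yields a bound independent of $\epsilon$, and letting $\epsilon\to 0^+$ by monotone convergence gives $\int_0^\infty\|S(t)x\|^2\,dt\le C\|x\|^2$ for all $x\in\mathcal{H}$. I would close with a Datko-type argument: since $S$ is a contraction, $t\|S(t)x\|^2\le\int_0^t\|S(t-s)S(s)x\|^2\,ds\le\int_0^\infty\|S(s)x\|^2\,ds\le C\|x\|^2$, so $\|S(t)\|^2\le C/t$; choosing $t_0$ with $\|S(t_0)\|<1$ and iterating the semigroup law delivers the exponential decay, completing the equivalence.
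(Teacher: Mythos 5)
Your proposal is mathematically sound, but there is nothing in the paper to compare it against step by step: the paper does not prove this theorem at all. It is quoted as a known result (Theorem 1.3.2 of Liu--Zheng \cite{LiuZ}, credited to Gearhart \cite{Gearhart} and Pr\"uss \cite{Pruss}), and the proof environment that follows the statement in the paper is in fact the proof of Remark \ref{Remark18}: a short application of the theorem, in which Propositions \ref{limsup} and \ref{EixoIm} are invoked to verify \eqref{EImaginario} and \eqref{Exponential}, whence exponential decay of $S(t)$ in the region $R_E$. What you have written is therefore the content of the black box the paper cites, and it is the classical Gearhart--Pr\"uss argument: the Laplace representation of the resolvent for necessity; and, for sufficiency, the Neumann-series extension of the imaginary-axis bound to the closed right half-plane, the Plancherel identity for Hilbert-space-valued functions (you correctly flag this as the step where the Hilbert structure is indispensable and where the statement fails in general Banach spaces), the resolvent-identity transfer to the fixed line $\mathrm{Re}\,\lambda=\epsilon_0$ to obtain an $\epsilon$-uniform bound, and the Datko-type iteration converting square-integrability of orbits into exponential decay. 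All of these steps are correct, including the contraction estimates $\|(\lambda I-\mathbb{B})^{-1}\|\leq 1/\mathrm{Re}\,\lambda$ and $t\|S(t)x\|^2\leq\int_0^t\|S(s)x\|^2\,ds$. The one point worth making explicit rather than implicit is the finiteness of $M_0=\sup_{\mu\in\R}\|(i\mu I-\mathbb{B})^{-1}\|$: this needs both hypotheses at once, namely continuity of the resolvent on the compact part of $i\R$ (available because \eqref{EImaginario} puts $i\R$ inside the open set $\rho(\mathbb{B})$) together with \eqref{Exponential} to control the tail $|\mu|\to\infty$; this is precisely where the two conditions of the theorem combine, and your write-up assumes it silently.
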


\begin{proof}[\bf Proof.]
We are going to prove to remark \eqref{Remark18} indicated above,  using Theorem \eqref{LiuZExponential},  according to the results obtained in the regularity, note that the propositions \eqref{limsup} and \eqref{EixoIm} are verified in the region $R_E:=\{( \theta,\beta) / 0\leq\theta \leq 1\quad{\rm and}\quad 0<\beta\leq 1\}$ given in {\bf Fig. 08},  then in this region \eqref{EImaginario} and \eqref{Exponential} are verified.  Therefore $S (t)=e^{\mathbb{B}t}$ decays exponentially in the region $R_E$.
 \end{proof}

 \begin{center}
   \begin{tikzpicture}[scale=3, rotate=0]
    \label{Figura08}
 \foreach \x in {0,1}
    \draw (\x cm,1pt) -- (\x cm,-1pt) node[anchor=north] {$\x$};
    \foreach \y in {0,1}
  \draw[thick,->] (0,0) -- (1.25,0)
    node[anchor=north west] {$\theta$ axis};
    \draw[thick,->] (0,0) -- (0,1.25) node[anchor=south east] {$\beta$ axis};
  \draw (-0.1, 1) node[anchor=center] {$1$};
 \coordinate[label=left:] (a) at (0,0);
    \coordinate[label=right:] (b) at (1,0);
    \coordinate[label=right:] (c) at (1,1);
    \coordinate[label=left:] (d) at (0,1);

    \draw[rectangle, fill=black!20!white] (a) -- (b) -- (c) --  (d) -- (a) -- cycle;
    \draw[black!100!black, ultra thick] (0, 0) -- (0, 1);
    \draw[black!100!black, ultra thick] (0, 1) -- (1, 1);
   \draw[white,  dashed] (0, 0) -- (1, 0);
    \draw[black!65!black, ultra thick] (1, 0) -- (1, 1);
    \draw[fill=white!95!black, dashed] (1,0) circle (0.02);
   \draw[fill=white!95!black, dashed] (0,0) circle (0.02);
     \draw[fill=black!95!black, dashed] (1,1) circle (0.02);
       \draw[fill=black!95!black, dashed] (0,1) circle (0.02);
  \end{tikzpicture}

    {\bf Fig.  08}: Exponential Decay Region $R_E$.
\end{center}
     
 \end{remark}

\begin{remark} 
\rm
This research complements the work started by 
\cite{Suarez} that focused on the region 
$R_{SM}:=\{(\theta,\beta)\in [0,1] \times \{0\}\}$,  
they prove that 
$S(t)$ has no analyticity in 
$R_{SM}-\{(1,0)\}$, it is of Gevrey class 
$s>1/\theta$ and they also prove that 
$S (t) $ is analytic em $(\theta,\beta)=(1,0)$.
\end{remark}

\noindent
{\bf Acknowledgments.}
The authors are grateful to the referees 
for their valuable comments and suggestions.

\end{document}